\newtheorem{theorem}{Theorem}[section]
\newtheorem{prop}[theorem]{Proposition}
\newtheorem{lemma}[theorem]{Lemma}
\theoremstyle{definition}
\newtheorem{definition}[theorem]{Definition}
\theoremstyle{remark}
\newtheorem{remark}[theorem]{Remark}
\newtheorem*{remark*}{Remark}
\numberwithin{equation}{section}
\newcommand{\ds}{\displaystyle}
\def\Rb{\mathbb R}
\def\Cc{\mathcal C}
\def\Dc{\mathcal D}
\def\veps{\varepsilon}
\def\vphi{\varphi}
\newcommand{\set}[2]{\left\{ #1 \mathrel{}\middle| \mathrel{} #2 \right\}}
\begin{document}

\title[Initial-boundary value problem for the Lifshitz-Slyozov equation]{The Initial-boundary value problem for the Lifshitz-Slyozov equation with non-smooth rates at the boundary}

\author[J.~Calvo]{Juan~Calvo$^{*,\lowercase{a}}$}
\address{$^*$Departamento de Matem\'atica Aplicada and Research Unit ``Modeling Nature'' (MNat), Universidad de Granada, Granada, Spain.}
\address{$^a$Corresponding author} \email{juancalvo@ugr.es} 

\author[E.~Hingant]{Erwan~Hingant$^{\dag,\S}$}
\address{$^\dag$Departamento de Matem\'atica, Universidad del B\'io-B\'io, Concepci\'on, Chile.}\email{ehingant@ubiobio.cl}

\author[R.~Yvinec]{Romain~Yvinec$^{\ddag,\$,\S}$}
\address{$^{\ddag}$PRC, INRAE, CNRS, Universit\'e de Tours, 37380 Nouzilly, France.}
\address{$^{\$}$Inria, Inria Saclay-Île-de-France, 91120 Palaiseau, France}
\address{$^{\S}$Associate member at Cogitamus Laboratory.}
\email{romain.yvinec@inrae.fr}

\date{\today}

\begin{abstract}
We prove existence and uniqueness of solutions to the initial-boundary value problem for the Lifshitz--Slyozov equation (a nonlinear transport equation on the half-line), focusing on the case of kinetic rates with unbounded derivative at the origin. Our theory covers in particular those cases with rates behaving as power laws at the origin, for which an inflow behavior is expected and a boundary condition describing nucleation phenomena needs to be imposed. The method we introduce here to prove existence is based on a formulation in terms of characteristics, with a careful analysis on the behavior near the singular boundary. As a byproduct we provide a general theory for linear continuity equations on a half-line with transport fields that degenerate at the boundary. We also address both the maximality and the uniqueness of inflow solutions to the Lifshitz--Slyozov model, exploiting monotonicity properties of the associated transport equation.  
\end{abstract}

\keywords{nonlinear transport equation, singular initial-boundary value problem, dynamic boundary condition, characteristics formulation, Ostwald ripening, nucleation theory, polymerization}

\subjclass[2010]{35A01, 35B60, 35C99, 35L04, 35M13, 35Q92}

\maketitle

\tableofcontents

\section{Introduction}

\subsection{The Lifshitz--Slyozov equation}

The purpose of this work is to provide a well-posedness theory for the Lifshitz--Slyozov model with inflow boundary conditions under widely general assumptions on the initial data and the kinetic rates. The Lifshitz--Slyozov system \cite{Lifshitz} describes the temporal evolution of a mixture of monomers and aggregates, where individual monomers can attach to or detach from already existing aggregates. The aggregate distribution follows a transport equation with respect to a size variable, whose transport rates are coupled to the dynamics of monomers through a mass conservation relation. The initial-boundary value problem for the Lifshitz--Slyozov model thus reads
\begin{equation} 
\label{eq:LS-N}
\left\{
 \begin{array}{l}
 \ds \frac{\partial f(t,x)}{\partial t} + \frac{\partial [(a(x)u(t)-b(x))f(t,x)]}{\partial x} = 0  \vphantom{\int} \, , \quad t>0\,, \ x\in(0,\infty)\,,\\[0.8em]
 \ds u(t) + \int_0^\infty xf(t,x)\, dx = \rho \, , \quad t>0\\[0.8em]
 \end{array}
 \right.
\end{equation}
for some given $\rho>0$, subject to the initial condition
\begin{equation}
\label{eq:LS-Nini}
f(0,x)=f^{in}(x)\,,\ \quad x\in (0,\infty)
\end{equation}
and the boundary condition 
\begin{equation}
\label{eq:LS-Nbc}
 \lim_{x\to 0^+} (a(x)u(t)-b(x))f(t,x) = \mathfrak n(u(t))\, , \quad t>0 
\end{equation} 
whenever \smash{$u(t)>\lim_{x\to 0^+} \tfrac{b(x)}{a(x)}$}. Here $f(t,x)$ is a nonnegative distribution of aggregates according to their size $x$ and time $t$, $u(t)$ is the monomer concentration and $\rho$ is interpreted as the total mass of the system. The kinetic rates $a(x)$ and $b(x)$ determine how fast do attachment (a given monomer attaches to a given aggregate) and detachment (a monomer detaches from a given aggregate) reactions take place. Aggregates change their size over time according to the quantity of monomers that they gain or lose through the previous reactions. Note that the attachment process is a second order kinetics, responsible of the nonlinearity, whereas detachment is a first order kinetics, as reflected in the transport term in \eqref{eq:LS-N}. The function $\mathfrak{n}$ in \eqref{eq:LS-Nbc} can be interpreted as a nucleation rate (i.e. the rate of formation of zero-size aggregates from monomers). At least formally, this rate governs the total number of aggregates as \smash{$\tfrac{d}{dt}\int_0^\infty f(t,x)dx=\mathfrak n(u(t))$} whenever $u(t)>\lim_{x\to 0^+} \tfrac{b(x)}{a(x)}$. This last condition means that the characteristic curves point towards the domain $x>0$ at time $t$, in which case a boundary condition must be specified, and is given by \eqref{eq:LS-Nbc}. Similar boundary conditions have been considered in \cite{Calvo,Collet2002,Deschamps}.

Writing the transport flow as $a(x)u(t)-b(x)=a(x)(u(t)-\Phi(x))$, with $\Phi:=b/a$, allows to appreciate the crucial role of the function $\Phi$. 
The latter measures the relative strength of detachment with respect to attachment, for a given aggregate size. 
Therefore, this single function includes most of the relevant information of the model. 
When $\Phi$ is monotonously decreasing, with $\lim_{x\to 0^+} \Phi(x)>\rho\geq u(t)$, large aggregates ($x>\Phi^{-1}(u(t))$) grow larger at the expense of smaller ones ($x<\Phi^{-1}(u(t))$), a phenomena called Ostwald ripening, and in such a case a boundary condition like \eqref{eq:LS-Nbc} is not needed. 
The Lifshitz--Slyozov model has been traditionally used to describe late stages of phase transitions, where the above mentioned Ostwald ripening phenomena take place: recall indeed that the classical Lifshitz--Slyozov rates are given by $a(x)=x^{1/3}$ and $b(x)=1$, see e.g. \cite{Niethammer2000}.  
In standard nucleation theory, a discrete size model analog, named the Becker-D\"oring model \cite{HY17}, is rather used to describe the initial stage of phase transition, where the nucleation process is the dominant one. 
Recently, the  intermediate stage has been considered in the physical literature \cite{Alexandrov13,Alexandrov20,Makoveeva,Shneidman10,Shneidman11}, where the growth of large aggregates and the  ongoing nucleation rate are of equal importance, leading to equations like \eqref{eq:LS-N}--\eqref{eq:LS-Nbc} or variants of it, with $\lim_{x\to 0^+} \Phi(x)< u^{in}=\rho-\int_0^\infty xf^{in}(x)$, in which case a boundary condition must be specified.
Indeed, some sets of kinetic rates for Eq.~\eqref{eq:LS-N} may lead to Ostwald ripening phenomena only after a certain transient period, where the dynamics of the Lifshitz--Slyozov model are driven by boundary effects at very small sizes, and for which the boundary term \eqref{eq:LS-Nbc} becomes important. 

Moreover, recent applications of this framework in biologically-oriented contexts utilize a different set of kinetic rates and then a boundary condition becomes mandatory in order to make sense of the model. 
A growing literature can be found on applications to protein polymerization phenomena and neurodegenerative diseases, starting from the so-called prion model and some of its variants (see e.g. \cite{Calvo,Doumic2009,Greer,Laurencot2007,Leis,Prigent,Simonett} and references therein), whose different versions come as modifications of the standard Lifshitz--Slyozov equations. 
Inflow boundary conditions are used to describe nucleation processes; the discrete models considered in \cite{Collet2002,Deschamps} are also related to this scenario by means of suitable scaling limits as we mention below. We also have in mind applications to modeling in Oceanography. 
For instance, the sea-surface microlayer (see e.g. \cite{Wurl}) is rich in conglomerates that grow in size by an aggregation process whereby particulate organic carbon  attaches to transparent exopolymeric particles; detachment effects can also take place and eventually additional terms may be included in \eqref{eq:LS-N}--\eqref{eq:LS-Nbc}, e.g. coagulation integrals. 
Tentative applications of variants of \eqref{eq:LS-N}--\eqref{eq:LS-Nbc} can be also envisioned where $x$ is a depth variable and the gradual sinking of aggregates (``marine snow'' \cite{Burd,Jackson}) proceeds by a ballasting process. We conjecture that more applications of this framework will gradually arise. 
The common feature is that the boundary condition \eqref{eq:LS-Nbc} can be interpreted as the synthesis of new aggregates from monomers and not necessarily by means of a mass action kinetics. 
The value $\Phi_0:=\lim_{x\to 0^+} \Phi(x)$ describes how strong are detachment effects compared to attachment effects for zero-size aggregates, which are precisely the ones formed by nucleation. Although the model does not account for the nucleation step in detail, nuclei are formed from monomers (by the function $\mathfrak n$). 
Unless the newborn aggregate is able to surmount a certain energetic barrier, it is unstable and dissolves immediately. Only stable aggregates persist long enough to grow larger by the addition of extra monomers. 
This stability issue is represented here by the value $\Phi_0$: the lower this value is, the more stable are these zero-sized seeds. With the boundary condition \eqref{eq:LS-Nbc}, we are representing an average behavior,  whereby nucleation is successful only when we have enough monomer availability, that is the condition $u(t)>\Phi_0$. 
See \cite{Deschamps} for more details on those lines, where the model \eqref{eq:LS-N}--\eqref{eq:LS-Nbc} is deduced as a scaling limit of the Becker--D\"oring model and the inflow boundary condition is interpreted in terms of the scaling and the mesoscopic reaction rates; note that some partial analysis in this direction were already given in \cite{Collet2002}. 

As far as we know, works covering mathematical aspects of the initial-boundary value problem for the Lifshitz--Slyozov model are presently scarce. We mention \cite{Calvo}, where it is shown that in some particular cases the model leads to dust formation (concentration at zero size), a behavior that can be somewhat prevented if fragmentation terms are incorporated into the model. Incidentally, the model with kinetic rates such that the boundary becomes characteristic is considered in \cite{Collet00}. Quite the contrary, the mathematical literature for the classical Lifshitz--Slyozov model is well established. Concerning density solutions, existence and uniqueness of mild solutions for Lipschitz rate functions is given in \cite{Collet00}, whereas existence and uniqueness of weak solutions for rates that need not be regular at the origin are provided in \cite{Laurencot01}. Measure solutions were considered in \cite{Collet00,Niethammer2000,Niethammer2005}. Mathematical justifications of the  connection between the Becker--D\"oring model and the Lifshitz--Slyozov model can be found in \cite{Collet2002,Laurencot2002,Niethammer03,Schlichting}; the results therein can be also understood as existence proofs. The long time behavior is analyzed in \cite{Collet00,Collet2002bis}, however our understanding of the dynamical behavior is not complete yet. Therefore, numerical simulations are a useful way to get further insights on the asymptotic behavior; some contributions along these lines are \cite{Carrillo,Goudon}. A number of variants of the Lifshitz--Slyozov model have been considered in the literature as well; we refer to \cite{Conlon,Goudon20,Hariz,Stoltz,Velazquez} for diffusive versions (also advocate to represent intermediate stages of aggregates growth) and to \cite{Laurencot2002LSW,Niethammer2000,Niethammer2005} for the Lifshitz--Slyozov--Wagner model.

In this contribution we study existence and uniqueness of local-in-time solutions for \eqref{eq:LS-N}--\eqref{eq:LS-Nbc}, together with continuation criteria and results on long-time behavior. To the best of our knowledge, this is the first contribution that tackles the well-posedness issue for the inflow boundary condition \eqref{eq:LS-Nbc}; therefore, our results cannot be directly compared with those given in classical works like \cite{Collet00,Laurencot01}. However, our methods of proof owe much to theirs, as we shall explain in the sequel. In order to tackle the well-posedness of \eqref{eq:LS-N}--\eqref{eq:LS-Nbc} we have chosen to use aa approach based on characteristics. This has the advantage of providing a semi-explicit representation formula (which may prove useful for e.g. designing particle methods) and is reminiscent of the works \cite{Collet00,Collet2002bis}. Due to the wide spectrum of  applications mentioned above, it is crucial to be able to cope with rates that are not regular at the origin. This generates a number of technical difficulties in order to make sense of characteristic curves, difficulties  that are not present when the rates are globally Lipschitz; one of the main contributions of this paper is to provide a reformulation that allows to give a suitable meaning to phase space trajectories/characteristic curves even when there is no forward-in-time uniqueness for those. We take definite advantage of working in  dimension one and represent solutions as a mixture of trajectories reaching the initial configuration or the boundary datum respectively, for every time instant. In such a way we are able to construct solutions unambiguously. Similar ideas belong to the folklore on boundary problems for transport equations, although we have not been able to find a suitable reference covering our non-Lipschitz regularity setting. Note in particular that we do not assume to have transport fields with bounded divergence (see \cite{Crippa14a,Crippa14b} for contributions in that direction); recall that the assumptions in \cite{diPerna} can be lifted in some cases, see e.g. \cite{Crippa,Desjardins}. Thus, for the reader's convenience we work out the full theory from scratch, which we believe to be of independent interest for the sake of other applications. Our construction guarantees that no singularities (shock formation, concentration phenomena) are created during the temporal evolution despite of the incoming boundary flow. We also extend the uniqueness proof in \cite{Laurencot01} to be able to cope with inflow solutions in this low-regularity context. As regards the scope of the theory we develop here, we provide examples of local solutions that can be extended to global ones and at the same time we clearly show why local-in-time existence of inflow solution is the best we can hope for generically. The breakdown of global existence is proved by giving examples of solutions that do not exist globally in time because the boundary condition loses its meaning, which raises the problem of giving a wider meaning to the solution concept in order to be able to extend every local solution to a global one. This is an important issue that is deeply connected with a full understanding of the long time behavior and will be tackled elsewhere by the authors and collaborators. 

We now state our main definitions and results in subsection \ref{ssec:main}, and give the general strategy of their proofs with the outline of the manuscript in subsection \ref{ssec:outline}.

\subsection{Definitions and main results}
\label{ssec:main}

Let us recall a few classical notations. Given a subset $\Omega$ of $\Rb^d$ equipped with the subspace topology, we denote by $\mathcal{C}^k(\Omega)$ the space of continuous real-valued function defined on $\Omega$ with at least $k$ continuous derivatives and $\mathcal{C}_c^k(\Omega)$ is its subspace consisting of compactly supported functions. When $\Omega$ is open, we also use $\mathcal D(\Omega)$, the space of infinitely differentiable real-valued functions defined on $\Omega$ with compact support, and $\mathcal D'(\Omega)$, its topological dual, the space of distributions on $\Omega$. For a measure $\mu$ defined on the borelian sets of $\Omega$ we understand by $L^1(\Omega,\mu)$, resp. $L^\infty(\Omega,\mu)$, the classical Lebesgue space consisting of the equivalence  class of $\mu$-integrable, resp. $\mu$-essentially bounded, real-valued functions defined on $\Omega$ agreeing $\mu$-almost everywhere (\emph{a.e.}). Recall that the weak topology on $L^1(\Omega,\mu)$, denoted by the prefix $w$, is the topology induced by the dual space $L^\infty(\Omega,\mu)$. The reference to the measure $\mu$ might be omitted if we clearly refer to the Lebesgue measure. The subscript \emph{loc} for \emph{locally} might be added to the Lebesgue space with the usual sense.  We will make use of two more spaces, for $X$ a Banach space and $I$ an interval: $\mathcal{C}(I,w-X)$ denotes the space of continuous $X$-valued functions defined on $I$, where $X$ is endowed with its weak topology and $L^\infty(I,X)$ is the Bochner space of essentially bounded $X$-valued functions defined on $I$ agreeing \emph{a.e.} with respect to the Lebesgue measure on $I$. Also, during the document we will use a notation like $C(A,B,\ldots)$ to denote a positive constant depending on the quantities between brackets, whose actual value is not relevant. Its value may change from line to line without explicit mention. Finally, for any $T\in(0,\infty]$ we set 
\[\Omega_T = [0,T)\times(0,\infty)\ \text{and}\ \Omega_T^* = (0,T)\times(0,\infty).\]
Note that when we refer to $T>0$ in what follows we shall always assume it is finite unless it is explicitly stated otherwise.

\medskip

In order to introduce our solution concept, let us give first the minimal regularity needed to define a solution to the problem \eqref{eq:LS-N}--\eqref{eq:LS-Nbc}.

\begin{definition}[Kinetic rates]
    A triplet $\{a,b,\mathfrak n\}$ defines kinetic rates provided that:
    \begin{enumerate} [align=left, leftmargin=2pt, labelindent=\parindent,label=\textup{(\arabic*)},itemsep=3pt]
    \item $a$ and $b$ are locally bounded and nonnegative functions on $[0,\infty)$,
    \item The function  $\Phi(x) \coloneqq b(x)/a(x)$ is defined for {\it a.e.} $x>0$ in $[0,\infty]$ and has a limit $\Phi_0\in[0,\infty]$ at $0^+$.
    \item $\mathfrak n$ is a locally bounded and nonnegative function on $[\Phi_0,\infty)$. 
    \end{enumerate}
\end{definition}

As detailed in introduction, the relative value of the monomer concentration $u$ with respect to the value $\Phi_0$ governs whether or not nucleation occurs. For any size $x\ge 0$ such that $\Phi(x)=u(t)$, the transport field vanishes and may change sign. We are interested in situations where the transport field points inwards in a neighborhood of zero; therefore, $\Phi_0$ plays a crucial role in our concept of solution.

\begin{definition}[Solution to the initial-boundary value problem] \label{def:inflow}
    Let $T\in(0,\infty]$. Assume to be given the kinetic rates $\{a,b,\mathfrak n\}$, a constant $\rho>0$ and a nonnegative function $f^{\rm in}$ belonging to $L^1(0,\infty)$. We say that a function $f$ is a solution to the Lifshitz--Slyozov equation on $[0,T)$ with  mass $\rho$, kinetic rates $\{a,b,\mathfrak n\}$ and initial value $f^{\rm in}$ if the following statements are satisfied:
    \begin{enumerate} [align=left, leftmargin=2pt, labelindent=\parindent,label=\textup{(\arabic*)},itemsep=3pt]
    \item The function $f$ belongs to $\mathcal C([0,T),w-L^1((0,\infty),x \,  dx))$, is nonnegative and for each $T^*<T$, it also belongs to  $L^\infty((0,T^*),L^1((0,\infty),dx))$;
    \item For all $t\in[0,T)$, 
    \begin{equation}\label{eq:def u}
    u(t)\coloneqq \rho-\int_0^\infty xf(t,x)\, dx >\Phi_0\,;
    \end{equation}
    \item For all $\vphi\in \mathcal{C}^1_c([0,T)\times[0,\infty))$, there holds that
    \begin{multline} \label{eq:LS-weak-time-density-boundary}
    \int_{0}^{T} \int_0^\infty \left(\partial_t \vphi(t,x) + (a(x)u(t)-b(x))\partial_x\vphi(t,x)\right) f(t,x)\, dx \, dt\\
    + \int_{0}^{T} \vphi(t,0)\mathfrak n(u(t)) \, dt  + \int_0^\infty \vphi(0,x)f^{\rm in}(x) \, dx = 0\,.
    \end{multline}
    \end{enumerate}
\end{definition}

To construct a solution to the Lifshitz--Slyozov equation we will assume that the kinetic rates $\{a,b,\mathfrak n\}$ satisfy the following working hypotheses:
\begin{align}
& a,\, b \in \mathcal C^0([0,\infty))\cap \mathcal C^1(0,\infty)\, ,  \tag{H1} \label{H1} \\
& a'\ \text{and} \  b' \ \text{are bounded on} \ (1,\infty)\,, \tag{H2} \label{H2} \\
& a(x)>0 \ \text{for all}\ x>0 \ \text{and} \ \tfrac 1 a \in L^1(0,1)\, , \tag{H3} \label{H3}
 \\
& \Phi' \in L^1(0,1)\, , \tag{H4} \label{H4} \\
& \mathfrak n \ \text{is continuous on} \ [\Phi_0,\infty)\,. \tag{H5} \label{H5}
\end{align}
Moreover, we restrict the choice of initial data to 
\begin{align}
& f^{\rm in} \in L^1((0,\infty),(1+x)\, dx)\,, \tag{H6} \label{H6}\\
& u^{\rm in} \coloneqq  \rho-\int_0^\infty xf^{\rm in}\, dx >\Phi_0\,, \tag{H7} \label{H7}
\end{align}
so that the balance of mass \eqref{eq:def u} makes sense at time $t=0$ together with the regularity required on $f^{\rm in}$ in Definition \ref{def:inflow}. Note that assumptions \eqref{H1} and \eqref{H2} are very similar to those considered in \cite{Laurencot01} and allow us to consider a larger set of kinetic rates than that of \cite{Collet00} as far as regularity is concerned. In particular, these ensure the existence of a positive constant $K_r$ such that
\begin{equation}\label{eq:sublinearity}
 a(x) + b(x) \leq K_r(1+x)
\end{equation}
for all $x\ge 0$. 
Recall that existence is not guaranteed for rates exhibiting strictly superlinear growth \cite{Collet00}.

Before we discuss further these assumptions and state our existence result, let us introduced a lemma that might help to interpret Definition \ref{def:inflow} through the standard moment equations, which in turn will be useful for several estimates in the sequel. 

\begin{lemma}[Moment equations] \label{lem:moments equations}
    Assume to be given the kinetic rates $\{a,b,\mathfrak n\}$ satisfying hypotheses \eqref{H1}-\eqref{H2} and \eqref{H5}, a constant $\rho>\Phi_0$ and a nonnegative function $f^{\rm in}$ satisfying \eqref{H6} {and \eqref{H7}}. Let $T>0$ and $f$ be a solution to the Lifshitz-Slyozov equation on $[0,T)$ with mass $\rho$, kinetic rates $\{a,b,\mathfrak n\}$ and initial value $f^{\rm in}$. For all $t\in[0,T)$ and for  every $h\in \mathcal C^0([0,\infty))$ such that $h'\in L^\infty(0,\infty)$, we have
    \begin{multline} \label{eq:LS-moment-equation}
    \int_0^\infty h(x)f(t,x)\, dx =  \int_0^\infty h(x)f^{\rm in}(x)\, dx  \\
    + \int_{0}^{t} \int_0^\infty (a(x)u(s)-b(x))h'(x)f(s,x)\, dx \, ds + \int_{0}^{t} h(0) \mathfrak n(u(s)) \, dt\,.
    \end{multline}
     Moreover, $f$ belongs to $L^\infty((0,T);L^1((0,\infty),(1+x)dx))$, and the derivative of $u$ on $(0,T)$ belongs to $L^\infty(0,T)$ and is given by
    \begin{equation} \label{eq:derivative of u}
    \frac{d u(t)}{dt} = - u(t) \int_0^\infty a(x)f(t,x)\, dx + \int_0^\infty b(x) f(t,x)\, dx\, ,
    \end{equation}
    for \emph{a.e.} $t\in(0,T)$.
\end{lemma}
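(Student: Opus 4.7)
The plan is to derive the moment identity \eqref{eq:LS-moment-equation} from the weak formulation \eqref{eq:LS-weak-time-density-boundary} by testing against product functions $\varphi(s,x) = \chi_\varepsilon(s) h(x)$, first for smooth compactly supported $h$ and then extending by approximation to Lipschitz $h$. The integrability statement on $f$ and the equation for $u'$ follow as corollaries upon specializing $h$ to $\mathbf{1}$ and $x$.

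First, I fix $t \in [0, T)$ and choose $\chi_\varepsilon \in \mathcal{C}^1([0, T))$ nonincreasing with $\chi_\varepsilon(0) = 1$, $\chi_\varepsilon \equiv 1$ on $[0, t - \varepsilon]$, and $\chi_\varepsilon \equiv 0$ on $[t, T)$. Inserting $\varphi = \chi_\varepsilon h$ with $h \in \mathcal{C}^1_c([0,\infty))$ into \eqref{eq:LS-weak-time-density-boundary}, and writing $I_h(s) := \int_0^\infty h(x) f(s,x)\, dx$ and $J_h(s) := \int_0^\infty (a(x)u(s) - b(x)) h'(x) f(s,x)\, dx + h(0) \mathfrak{n}(u(s))$, one obtains
\[
\int_0^T \chi_\varepsilon'(s) I_h(s)\, ds + \int_0^T \chi_\varepsilon(s) J_h(s)\, ds + \int_0^\infty h(x) f^{\rm in}(x)\, dx = 0.
\]
The sublinear estimate \eqref{eq:sublinearity}, the bound $0 \leq u \leq \rho$ from the mass relation \eqref{eq:def u}, the continuity of $u$ (via the weak-$L^1(x\,dx)$ continuity of $f$), and \eqref{H5} all place $I_h, J_h \in L^\infty_{\rm loc}([0, T))$. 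Restricting to $\chi \in \mathcal{C}^1_c((0, T))$ yields the distributional identity $I_h' = J_h$ in $\mathcal{D}'((0,T))$, so $I_h$ admits an absolutely continuous representative; combined with the initial contribution $\chi_\varepsilon(0) = 1$ as $\varepsilon \to 0^+$, this produces $I_h(t) = \int h f^{\rm in}\, dx + \int_0^t J_h(s)\, ds$ pointwise in $t$, establishing \eqref{eq:LS-moment-equation} for $h \in \mathcal{C}^1_c([0, \infty))$.

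Next I bootstrap to the integrability bound on $f$ by applying the identity to $h = \eta_n$, where $\eta_n(x) := \tilde\eta(x/n)$ for some $\tilde\eta \in \mathcal{C}^1_c([0,\infty))$ nonincreasing with $\tilde\eta(0) = 1$, $\tilde\eta \equiv 1$ on $[0, 1]$ and $\tilde\eta \equiv 0$ on $[2, \infty)$. Since $\eta_n \nearrow \mathbf{1}$ pointwise, monotone convergence yields $I_{\eta_n}(t) \nearrow \int f(t,x)\, dx$, while the flux contribution is supported in $[n, 2n]$ and dominated by $C \int_n^{2n} xf(s,x)\, dx / n \to 0$ by integrability of $xf$, with a uniform-in-$s$ bound coming from the $L^\infty_{\rm loc}(L^1)$ control permitting dominated convergence in $s$. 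Passing to the limit yields
\[
\int_0^\infty f(t,x)\, dx = \int_0^\infty f^{\rm in}(x)\, dx + \int_0^t \mathfrak{n}(u(s))\, ds
\]
for every $t \in [0, T)$. Since $T < \infty$ and $\mathfrak{n}(u(\cdot))$ is bounded on $[0, T)$ by continuity of $\mathfrak{n}$ on $[\Phi_0, \rho]$, combining this with $\int x f\, dx \leq \rho$ gives $f \in L^\infty((0, T); L^1((0,\infty), (1+x)dx))$.

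Extending \eqref{eq:LS-moment-equation} to general $h \in \mathcal{C}^0([0,\infty))$ with $h' \in L^\infty$ follows by setting $h_n := (h \zeta_n) \ast \varrho_{1/n} \in \mathcal{C}^1_c([0,\infty))$, with $\zeta_n$ a smooth cutoff at infinity, and applying dominated convergence using $\|h_n'\|_\infty$ uniformly bounded, $|h_n(x)| \leq C(1+x)$, and the integrability $(1+x)f \in L^\infty((0, T); L^1)$ just established. Specializing $h(x) = x$ yields $\rho - u(t) = \int xf^{\rm in}\, dx + \int_0^t \int(au(s)-b) f\, dx\, ds$, from which \eqref{eq:derivative of u} follows by differentiation and $u' \in L^\infty(0, T)$ from $|u'(t)| \leq K_r(1+\rho) \|f\|_{L^\infty((0,T); L^1((1+x)dx))}$. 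The main technical obstacle lies in the first step: ensuring that the pointwise-defined $I_h(t)$ agrees with the absolutely continuous representative of the distributional identity at every $t \in [0, T)$ and not merely almost every $t$, despite $h/x$ not being bounded near the origin when $h(0) \neq 0$; the $\chi_\varepsilon$-limit argument is calibrated precisely to hit the initial datum and thereby propagate the pointwise identity to all times.
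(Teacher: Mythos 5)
Your proposal is correct and follows essentially the same strategy as the paper's proof: test the weak formulation against separable test functions $\chi(t)h(x)$, obtain the distributional identity for $\int h f\,dx$, extend by regularization from $h\in\mathcal{C}^1_c$ to $h$ with $h'\in L^\infty$, and specialize $h\equiv 1$ and $h(x)=x$ to read off the $L^1((1+x)dx)$ bound and the formula for $u'$. The minor differences (using $\chi_\varepsilon(0)=1$ going down to zero near $t$, the explicit $\eta_n\nearrow\mathbf{1}$ truncation before mollifying to general $h$, and dispensing with Gronwall since $\int xf(t)\,dx=\rho-u(t)\le\rho$) are cosmetic rather than structural, and you correctly flag the one genuine subtlety -- upgrading the a.e.-in-$t$ identity to every $t$ when $h(0)\neq 0$, which the paper handles equally tersely through the weak time-continuity of $f$.
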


\begin{proof}
First, plug $\vphi(t,x)=g(t)h(x)$ with $g\in\Cc^1_c((0,T))$ and $h\in\Cc^1_c([0,\infty))$ into Eq.~\eqref{eq:LS-weak-time-density-boundary} and observe that the distributional derivative of $\int_0^\infty h(x)f(t,x)\, dx$ belongs to  $L^\infty(0,T)$ by Eq.~\eqref{eq:def u}, \eqref{eq:sublinearity} and the regularity (point 1) in Definition \ref{def:inflow}. Then, the time continuity of $f$ yields $f(0,x)=f^{\rm in}(x)$ \emph{a.e.} $x>0$, so that  \eqref{eq:LS-moment-equation} holds for $h\in \mathcal C_c^1([0,\infty))$. Then a standard regularization procedure allows to consider $h$ continuous on $[0,\infty)$ with $h'\in L^\infty(0,\infty)$ in \eqref{eq:LS-moment-equation}, again, thanks to \eqref{eq:def u}, \eqref{eq:sublinearity} and the regularity (point 1) in Definition \ref{def:inflow}. The fact that $f$ belongs to $L^\infty((0,T);L^1((0,\infty),dx))$ follows by taking $h=1$ in \eqref{eq:LS-moment-equation} and the fact that $\mathfrak n$ is bounded on $[0,\rho]$ and $u(t)\in[\Phi_0,\rho]$ for all $t\in[0,T)$. Using  $h(x)=x$ and Gronwall's lemma yields that $f$ also belongs to $L^\infty((0,T);L^1((0,\infty),xdx))$. Finally, by the definition of $u$, we identify its derivatives. 
\end{proof}

\begin{theorem}[Existence of solution] \label{thm:existence}
    Assume to be given the kinetic rates $\{a,b,\mathfrak n\}$ satisfying hypotheses \eqref{H1} to \eqref{H5}, a constant $\rho>\Phi_0$ and a nonnegative function $f^{\rm in}$ satisfying \eqref{H6} and \eqref{H7}. There exists $T\in(0,+\infty]$ and a function $f$ belonging to $\mathcal C([0,T);w-L^1((0,\infty),(1+x)\, dx))$ satisfying:
    \begin{enumerate} [align=left, leftmargin=2pt, labelindent=\parindent,label=\textup{(\arabic*)},itemsep=3pt]
        \item $f$ is a solution to the Lifshitz--Slyozov equation on $[0,T)$ with  mass $\rho$, kinetic rates $\{a,b,\mathfrak n\}$ and initial value $f^{\rm in}$;
        \item $f(0,x)=f^{\rm in}(x)$ for a.e. $x>0$ and
    \[\lim_{x\to 0^+} (a(x)u(t)-b(x))f(t,x) = \mathfrak n(u(t))\,,\]
    for all $t\in(0,T)$;
        \item Either $T=\infty$ or $T<\infty$ and $\lim_{t\to T} u(t) = \Phi_0$.
    \end{enumerate}
\end{theorem}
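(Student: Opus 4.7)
The strategy is to recast the problem as a fixed-point equation for the monomer concentration $u$ on a short time interval, representing $f$ explicitly via characteristics. Given a candidate $u\in\Cc([0,\tau];[\Phi_0+\delta,\rho])$ for some small $\delta,\tau>0$, I would consider the characteristic ODE $\dot X = a(X)u(t)-b(X) = a(X)(u(t)-\Phi(X))$. Away from $x=0$ the regularity granted by \eqref{H1} yields a locally Lipschitz field, so characteristics starting in $(0,\infty)$ are classically well defined. To cope with the boundary degeneracy, I would rescale the spatial variable via $Y=\sigma(X)$ with $\sigma(x)=\int_0^x a(s)^{-1}ds$, admissible thanks to \eqref{H3}; in the new variable the ODE becomes $\dot Y = u(t)-\Psi(Y)$ with $\Psi(0)=\Phi_0$ and $\Psi$ absolutely continuous in view of \eqref{H4}. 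Since $u(t)-\Phi_0>\delta>0$, the field at $Y=0$ is strictly positive and the forward boundary trajectory $X_0(\cdot;s)$ entering $(0,\infty)$ at each time $s\ge 0$ is unambiguously defined.

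Next I would assemble a candidate $f[u]$ by superposition. For $t\in(0,\tau]$, the curve $x\mapsto X_0(t;0)$ partitions $(0,\infty)$ into two regions: above it, $f(t,\cdot)$ is obtained by pushing the initial datum $f^{\rm in}$ forward along the characteristics emanating from $t=0$; below it, $f(t,\cdot)$ is obtained by pushing the boundary flux $\mathfrak n(u(s))$ forward along the characteristics $X_0(\cdot;s)$ emitted from $x=0$ at times $s\in(0,t)$. Using \eqref{H1}--\eqref{H5} and the change of variable above, one verifies that $f[u]\in \Cc([0,\tau];w\text{-}L^1((1+x)dx))$, that the weak formulation \eqref{eq:LS-weak-time-density-boundary} is satisfied, and that the pointwise trace prescription in item (2) is attained via the continuity of $\mathfrak n$, $a$, $b$ and $u$. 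This construction defines the map $\mathcal T(u)(t)=\rho-\int_0^\infty x\,f[u](t,x)\,dx$.

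The core of the argument is to produce a fixed point of $\mathcal T$ on a suitably small interval. The sublinearity estimate \eqref{eq:sublinearity}, the moment identities of Lemma \ref{lem:moments equations}, and the boundedness of $\mathfrak n$ on $[\Phi_0,\rho]$ provide uniform $L^1((1+x)dx)$ bounds on $f[u]$, which via a Gronwall argument yield $|\mathcal T(u)(t)-u^{\rm in}|\le Ct$. Shrinking $\tau$ I can simultaneously secure a self-mapping property on a small ball of $\Cc([0,\tau])$ around $u^{\rm in}$ and maintain $\mathcal T(u)(t)>\Phi_0+\delta/2$. Continuous (and, for $\tau$ small, Lipschitz) dependence of the characteristics on $u$ in the rescaled variable, combined with dominated convergence applied to the first moment, gives continuity/contractivity of $\mathcal T$, and a Banach fixed-point argument delivers a local solution. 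I expect the continuous dependence of $X_0(\cdot;s)$ on $u$ near the degenerate boundary, together with the verification that no mass is created or lost singularly through $x=0$ in the limit, to be the main technical hurdle.

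The trichotomy in item (3) then follows by a standard maximal-extension procedure. Define $T\in(0,\infty]$ as the supremum of times up to which the local construction can be iterated, using the trace $f(T_0,\cdot)$ at each restart time $T_0<T$ as a new initial datum; \eqref{H6}--\eqref{H7} are inherited by this trace thanks to the $L^1((1+x)dx)$ estimate and the bound $u(T_0)>\Phi_0$. If $T<\infty$ and $\liminf_{t\to T}u(t)>\Phi_0$ held, one more iteration past $T$ would contradict maximality; together with the uniform Lipschitz bound on $u$ supplied by \eqref{eq:derivative of u}, this forces $\lim_{t\to T}u(t)=\Phi_0$, completing the proof.
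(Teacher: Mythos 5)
Your overall architecture — reparametrizing the flow via $Y=\int_0^X a^{-1}$, assembling $f[u]$ as a superposition of push-forwards from the initial datum and from the boundary, and solving a fixed-point problem for $u$ — matches the paper's (and the annex's) strategy closely, and your maximal-extension argument for item (3) is also the one used. However, there is a genuine gap at the heart of the fixed-point step: you invoke a \emph{Banach} (contraction) argument, but under hypotheses \eqref{H1}--\eqref{H5} this cannot work. Contractivity of $u\mapsto\mathcal T(u)$ would require Lipschitz dependence of $f[u]$ on $u$, and in particular (i) Lipschitz continuity of $\mathfrak n$ — but \eqref{H5} only grants continuity, and the paper only strengthens this to \eqref{H5'} in the separate \emph{uniqueness} theorem — and (ii) quantitative Lipschitz dependence of the boundary-emitted characteristics $X_0(\cdot;s)$ and of the map $\sigma_t^{-1}$ on $u$ near the degenerate endpoint, which is precisely the hurdle you flag as open. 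The paper sidesteps both issues by using \emph{Schauder's} fixed point: it establishes a uniform $L^\infty$ bound on the time derivative of $t\mapsto\int xf[u](t,x)\,dx$ (giving precompactness of the image of the operator via Arzelà--Ascoli) and proves only \emph{continuity} of the operator, via pointwise convergence of $X^n(\cdot;0,x)$ and $\sigma_t^{-1,n}(s)$ combined with dominated convergence (Lemmas \ref{lem:limit-Xn}, \ref{lem:limit-sigman}). No contraction estimate is needed or available. Note that if your contraction argument worked under \eqref{H1}--\eqref{H5} it would yield uniqueness gratis, contradicting the paper's need for the additional monotonicity hypothesis \eqref{H8} (or \eqref{H8a}/\eqref{H8b}) in Theorem \ref{thm:uniqueness}. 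A secondary, minor point: the paper defines its operator with the truncation $G(u)(t)=\bigl(\rho-\int xf\bigr)\vee(\Phi_0+\delta)$, which makes the self-mapping property of $\mathcal B_\delta$ immediate on an arbitrary interval and only afterwards removes the truncation on a small initial interval; your approach of shrinking $\tau$ to keep $\mathcal T(u)>\Phi_0+\delta/2$ can be made to work, but the truncation is cleaner and avoids a circularity in the a priori bound.
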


\begin{remark} In the above theorem, besides proving the existence of a solution (point 1), we also prove in point 2 that the solution can be chosen with a trace at the origin and in point 3 we address its maximality. 
\end{remark}

\begin{remark}\label{rem:regulariy}
 We shall use  continuation arguments for $f$ several times in the sequel. It is worth noticing that, for $T$ finite, the time continuity holds on $[0,T]$ into $w-L^1((0,\infty),(1+x)dx)$. The regularity of the solution $f$ in this Theorem can be complemented as follows: (i) the sublinearity of the rates in \eqref{eq:sublinearity} entails that any moment in $x^\theta$ with $\theta\in[0,1]$ is time-continuous, 
 (ii) formula \eqref{eq:derivative of u} shows that $u$ is continuously differentiable. 
 \end{remark}

Hypotheses \eqref{H1} to \eqref{H4} fit well with power law rates: $a(x)=a_0x^{\alpha}$ and $b(x)=b_0x^{\beta}$ for $x\ge 0$ in the relevant case $0\leq \alpha \le \beta\le1$ with $a_0>0$, $b_0\ge 0$ and $\alpha<1$. Note that $\Phi(x)=\tfrac{b_0}{a_0} x^{\beta-\alpha}$ is such that $\Phi'$ is integrable at the origin and $\Phi_0=b_0/a_0$ if $\alpha=\beta$, while $\Phi_0 = 0$ if $\alpha<\beta$. The case $\alpha>\beta$ is out of the scope of this paper since $\Phi_0=\infty>\rho$ and then  the flow is outgoing. Hypothesis \eqref{H5} is trivially satisfied for $\mathfrak n(z)=\mathfrak n_0 z^n$ for $z\ge 0$ with $n\ge 1$ and $\mathfrak n_0\ge 0$, which is the typical situation we have in mind. Condition \eqref{H6} on initial data seems to be optimal to make sense of the mass balance for the initial datum and to be able to account for the boundary in the formulation \eqref{eq:LS-weak-time-density-boundary}. Finally, hypothesis \eqref{H7} is essential so that we may consider inflow solutions right from the initial time.

\begin{remark}
In this paper we work with rates $a$ and $b$ having classical regularity on $(0,\infty)$; this can be relaxed to Lipschitz regularity. The actual difficulty in the analysis comes rather from the lack of regularity at the origin (which includes the case of power law rates) combined with the boundary condition. In particular $a'$ and $b'$ need not be bounded around zero. The need for the integrability of $1/a$ is related to the method of factorization of the flow we consider here and works well for power laws too. Indeed, we rewrite the flow as $a(x)u(t)-b(x)=a(x)(u(t)-\Phi(x))$ and we consider (see Annex) a reparametrized flow of the form  $V(t,x)=u(t)-\Phi\circ A^{-1}(x)$, where $A$ is the primitive of $1/a$. If $1/a$ is not integrable around zero, the return time of the characteristic towards the boundary is infinite, in which case no boundary condition is needed. We also  mention that the integrability of $\Phi'$,  which  is equivalent to the integrability of $(\Phi\circ A^{-1})'$, is a standard assumption on the flow of a transport equation, namely $V\in L^\infty((0,T),W^{1,1}((0,R))\, \forall R>0$.  
\end{remark}

The solution constructed in Theorem \ref{thm:existence} can be shown to be unique under two additional assumptions. First, we strengthen \eqref{H5} by 
\begin{equation}\tag{H5'}\label{H5'}
 \mathfrak{n} \text{ is locally Lipschitz on } [\Phi_0,+\infty)\,,
\end{equation}
(which is satisfied e.g. for mass action kinetics) and we need some monotonicity of the function $\Phi$ around zero, namely
\begin{align}
& \text{There exists } x^*>0 \text{ such that } \Phi  \text{ is monotone on } [0,x^*). \tag{H8} \label{H8}
\end{align}

\begin{theorem}[Uniqueness of solution]\label{thm:uniqueness}
	Under the hypotheses of Theorem \ref{thm:existence}, assume moreover \eqref{H5'} and \eqref{H8} to be true. For all $T>0$, there exists at most one solution to the Lifshitz--Slyozov equation on $[0,T)$ with  mass $\rho$, kinetic rates $\{a,b,\mathfrak n\}$ and initial value $f^{\rm in}$.
\end{theorem}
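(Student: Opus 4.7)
The plan is to adapt the uniqueness strategy of [Laurencot01], based on a primitive in the size variable, so as to incorporate the inflow boundary term. Fix $T>0$ and let $f_1,f_2$ be two solutions on $[0,T)$ with common data $(\rho,f^{\rm in})$ and kinetic rates $\{a,b,\mathfrak n\}$, with associated monomer concentrations $u_1,u_2$. I would introduce the tail primitive
\begin{equation*}
H(t,x):=\int_x^\infty (f_1-f_2)(t,y)\,dy,\qquad v(t):=u_1(t)-u_2(t),
\end{equation*}
and show, using Lemma~\ref{lem:moments equations} together with the trace property granted by point~(2) of Theorem~\ref{thm:existence}, that $H$ solves in the sense of distributions the linear inhomogeneous transport equation
\begin{equation*}
\partial_t H + (a(x)u_1(t)-b(x))\,\partial_x H = a(x)\,v(t)\,f_2(t,x),
\end{equation*}
with $H(0,\cdot)=0$ and boundary trace $H(t,0)=\int_0^t[\mathfrak n(u_1)-\mathfrak n(u_2)](s)\,ds$. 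An integration by parts (licit since $x H(t,x)\to 0$ as $x\to\infty$, thanks to the finite first moment) yields the key coupling identity $v(t)=-\int_0^\infty H(t,x)\,dx$, so the nonlinear feedback is reduced to a global $L^1$-control of $H$.

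Next I would close a Gronwall estimate for $t\mapsto\|H(t,\cdot)\|_{L^1(0,\infty)}$. Applying a regularized Kato inequality and integrating in $x$ produces, after one integration by parts in the transport term, a differential inequality involving three contributions: (i) the source term, bounded by $|v(t)|\int_0^\infty a f_2\,dx\le C\|H(t,\cdot)\|_{L^1}$ in view of the moment bound of Lemma~\ref{lem:moments equations} and $|v|\le\|H\|_{L^1}$; (ii) the trace contribution at $0$, namely $(a(0)u_1(t)-b(0))\,|H(t,0)|$, controlled via the Lipschitz hypothesis \eqref{H5'} on $\mathfrak n$ by $C\int_0^t\|H(s,\cdot)\|_{L^1}\,ds$; and (iii) the divergence-of-flux term $\int_0^\infty (au_1-b)'(x)\,|H(t,x)|\,dx$.

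Term (iii) is the main obstacle and is precisely where hypothesis \eqref{H8} enters essentially. Decomposing $(au_1-b)'=a'(u_1-\Phi)-a\,\Phi'$, the summand $a\,\Phi'$ is integrable at $0$ by \eqref{H4}, but $a'$ may be singular at the origin (as with power-law rates). The monotonicity of $\Phi$ on $[0,x^*)$ ensures that $u_1(t)-\Phi(\cdot)$ has at most one zero in $[0,x^*)$---either none, or exactly one at $\bar x(t):=\Phi^{-1}(u_1(t))$---so that the flux $a(u_1-\Phi)$ has a definite sign on each connected component of $\{x>0:\Phi(x)\neq u_1(t)\}$. Splitting the integration accordingly and integrating by parts on each subregion converts the singular bulk integrand into a boundary contribution at $\bar x(t)$ (which vanishes by definition) together with interior and trace terms that combine with the Kato flux to close the estimate. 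An equivalent and arguably cleaner route is to work in the reparametrized variable $Y:=\int_0^x \tfrac{dy}{a(y)}$ suggested by the remark following Theorem~\ref{thm:existence}, in which the field becomes $u_1(t)-\psi(Y)$ with $\psi$ monotone and $\psi'\in L^1$; standard renormalization arguments then apply uniformly up to $Y=0$. In either presentation, (i)--(iii) assemble into $\tfrac{d}{dt}\|H(t,\cdot)\|_{L^1}\le C\bigl(\|H(t,\cdot)\|_{L^1}+\int_0^t\|H(s,\cdot)\|_{L^1}\,ds\bigr)$, so Gronwall's inequality forces $H\equiv 0$, whence $f_1\equiv f_2$.
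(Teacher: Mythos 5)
Your plan follows the same skeleton as the paper's proof: introduce the tail primitive $H=F_1-F_2$ (the paper calls it $E$), derive its transport equation, couple back through $v(t)=u_1(t)-u_2(t)$ via $|v(t)|\le\int_0^\infty|H(t,x)|\,dx$, control the trace $H(t,0)$ by the Lipschitz hypothesis \eqref{H5'}, and close a Gronwall loop. Items (i) and (ii) of your estimate correspond to Eqs.~\eqref{eq:inter-w-1} and \eqref{eq:control_on_E+(0)} in Lemma~\ref{lem:newlem_E} and are fine.

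The gap lies in item (iii), which is exactly where the paper's Lemma~\ref{lem:vphi} does the real work, and your two proposed routes do not close it. The sign-splitting-plus-IBP route is circular: integrating $\int a'(u_1-\Phi)|H|\,dx$ by parts reproduces the transport flux $-\int a(u_1-\Phi)\,\partial_x|H|\,dx$ on each subregion, so after substituting back into the Kato identity you return to the expression you started from. Furthermore, even when the IBP yields a favorable sign, the residual contribution $\int a\,|\Phi'|\,|H|\,dx$ is only bounded by $\|H(t,\cdot)\|_{L^\infty}\,\|a\Phi'\|_{L^1}$, and $\|H\|_{L^\infty}$ is not controlled by $\|H\|_{L^1}$, so the inequality $\tfrac{d}{dt}\|H\|_{L^1}\le C(\|H\|_{L^1}+\int_0^t\|H\|_{L^1})$ is not obtained. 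The reparametrized route $Y=A(x)=\int_0^x dy/a(y)$ is the right idea and is in fact equivalent to working with the weight $\vphi=1/a$, which is precisely the paper's choice under alternative \eqref{H8a} (where $-a\Phi'$ is bounded above near $0$, so $\psi'=(\Phi\circ A^{-1})'=a\Phi'\circ A^{-1}$ stays bounded and the reparametrized field is nondegenerate). But \eqref{H8} is equivalent to \eqref{H8a} \emph{or} \eqref{H8b}, and under \eqref{H8b} (e.g. $a(x)=x^{1/2}$, $\Phi(x)=\Phi_0-x^{1/4}$) one has $\psi'\in L^1$ but $\psi'\to-\infty$ at $Y=0$, so the $L^1$-in-$Y$ Gronwall acquires an unbounded positive contribution $\int(-\psi')|\tilde H|\,dY$ that cannot be absorbed; ``standard renormalization uniformly up to $Y=0$'' is therefore not available. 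The paper handles this case by multiplying the weight $1/a$ by the extra exponential factor $\exp\bigl(-\int_x^{\bar x}(C/a+\Phi')/\delta\,dy\bigr)$, which is chosen so that $(u_1-\Phi)\,\partial_x(a\vphi)\le(C+a\Phi')\vphi$ holds even when $a\Phi'$ is unbounded below. You need this (or an equivalent weighted-norm device) to complete case \eqref{H8b}; as written, your argument only covers \eqref{H8a}.
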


Assumption \eqref{H8} is purely technical and avoids very irregular pathological situations like unbounded oscillations near the origin for the  function $\Phi$. It is clearly satisfied for $a$ and $b$ being power laws or other smooth functions and therefore not very restrictive in applications. Actually, we show in Section \ref{sec:uniqueness} below that Theorem \ref{thm:uniqueness} can be proved under slightly more general assumptions on the kinetic rates, see the assumptions (\ref{H8a})--(\ref{H8b}) in that section. 

We finish this section by a theorem giving sufficient conditions for global solution to exist, as well as providing examples of maximal solutions defined in a finite time interval.

\begin{theorem}[Global and local solutions] \label{thm:global local}
Let $f$ be a solution to the Lifshitz--Slyozov equation on $[0,T)$ with  mass $\rho$, kinetic rates $\{a,b,\mathfrak n\}$, initial value $f^{\rm in}$ and with $T<\infty$.
Under the hypotheses of Theorem \ref{thm:existence}, the following statements hold: 
	\begin{enumerate} [align=left, leftmargin=2pt, labelindent=\parindent,label=\textup{(\arabic*)},itemsep=3pt]
	\item 
	Assume $\Phi(x) \geq \Phi_0$ for all $x>0$. Then, for the prescribed rates and initial value there exists a global solution $f\in \mathcal C([0,\infty);w-L^1((0,\infty),(1+x)\, dx))$.
	\item Assume that $f^{\rm in}$ is compactly supported, that $\Phi$ is convex and strictly decreasing and that there exists numbers $\underline a$, $\overline a$ such that $0< \underline a < a(x) <\overline  a <\infty$ for all $x>0$. Then, there is no global solution $f\in \mathcal C([0,\infty);w-L^1((0,\infty),(1+x)\, dx))$ for the prescribed rates and initial value.

    \end{enumerate}
\end{theorem}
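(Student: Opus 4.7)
Both parts are built on the dichotomy provided by point (3) of Theorem \ref{thm:existence}: either $T=\infty$ or $\lim_{t \to T} u(t) = \Phi_0$. In Part (1) the plan is to rule out the second alternative at any finite time, which forces $T=\infty$. In Part (2) the plan is, by contrast, to prove that the first moment $M(t) := \int_0^\infty x f(t,x)\, dx$ grows at least linearly in time, which forces $u\to\Phi_0$ at a finite time since $M(t)\leq \rho-\Phi_0$ while the solution lives.

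For Part (1), I would start from formula \eqref{eq:derivative of u} rewritten as $u'(t) = -\int_0^\infty a(x)(u(t)-\Phi(x))\,f(t,x)\,dx$ and use the hypothesis $\Phi\geq\Phi_0$ to obtain
\[
u'(t) \;\geq\; -(u(t)-\Phi_0)\, I(t)\,, \qquad I(t) := \int_0^\infty a(x)\,f(t,x)\, dx.
\]
Sublinearity \eqref{eq:sublinearity} together with the zeroth- and first-moment estimates of Lemma \ref{lem:moments equations} ensures that $I$ is locally bounded on $[0,T)$. Gronwall's inequality then yields $u(t)-\Phi_0 \geq (u^{\rm in}-\Phi_0)\exp\bigl(-\int_0^t I(s)\,ds\bigr) > 0$ on any $[0,T^*]\subset[0,T)$, which is incompatible with the finite-time alternative, so $T=\infty$.

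For Part (2), I would use the characteristic representation of $f$ furnished by the construction behind Theorem \ref{thm:existence}. Write $X(t;s,y)$ for the flow of $\dot X = a(X)(u(t)-\Phi(X))$ with $X(s;s,y)=y$. Strict decrease of $\Phi$ gives $\Phi(X)<\Phi_0<u(t)$ for $X>0$, so each trajectory is strictly increasing; combining $X(t;0,y)\geq y$ (hence $\Phi(X)\leq\Phi(y)$), $u\geq\Phi_0$, and $a\geq\underline a$ yields $\dot X \geq \underline a\,(\Phi_0-\Phi(y))$ along the characteristic, whence
\[
X(t;0,y) \;\geq\; y + \underline a\,(\Phi_0-\Phi(y))\,t.
\]
Representing $f(t,\cdot)$ as the pushforward of $f^{\rm in}$ by $X(t;0,\cdot)$ together with the nonnegative contribution injected through nucleation, and keeping only the former in the first moment, one obtains
\[
M(t) \;\geq\; \int_0^\infty X(t;0,y)\,f^{\rm in}(y)\, dy \;\geq\; M(0)+\underline a\,C_0\,t\,, \qquad C_0:=\int_0^\infty (\Phi_0-\Phi(y))\,f^{\rm in}(y)\,dy.
\]
Strict decrease of $\Phi$ together with the fact that $f^{\rm in}\in L^1$ carries positive mass on $(0,\infty)$ makes $C_0>0$. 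Combined with $M(t)=\rho-u(t)\leq \rho-\Phi_0$ while the solution exists, this gives $T \leq (u^{\rm in}-\Phi_0)/(\underline a\,C_0)<\infty$, so no global solution is possible.

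The hardest step is the characteristic/pushforward lower bound for $M(t)$; this rests on the explicit construction of the solution in Theorem \ref{thm:existence} as a mixture of trajectories emanating from the initial data and from the nucleating boundary, and a monotone comparison argument that discards the nonnegative boundary flux. Once even a one-sided version of this identity is available, the two differential inequalities above close the argument. I note that convexity of $\Phi$, the upper bound $a\leq\overline a$, and the compact support of $f^{\rm in}$ do not appear essential in the sketch above; they presumably enter through the technical machinery needed to set up the characteristic representation cleanly in the low-regularity context of this paper.
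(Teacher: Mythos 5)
Your Part (1) argument is essentially identical to the paper's: the Gronwall bound $u(t)-\Phi_0\ge (u^{\rm in}-\Phi_0)\exp(-\int_0^t\int a f)$ shows $u$ stays strictly above $\Phi_0$ on any finite interval, ruling out the finite-$T$ alternative of the maximality criterion in Theorem~\ref{thm:existence}. For Part (2), however, you take a genuinely different route. The paper works entirely with the moment identity \eqref{eq:derivative of u}: compact support of $f^{\rm in}$ and the bound $a\le\overline a$ are used to control the support $[0,z(t)]$ of $f(t,\cdot)$, convexity of $\Phi$ gives the linear-in-$x$ majorant $\Phi(x)\le\Phi_0+\tfrac{\Phi(z)-\Phi_0}{z}x$ on that support, and from there a logarithmic decay $u(t)\le u(0)-\tfrac{K}{\overline a}\ln(1+\tfrac{\overline a}{x_0}t)$ is derived, which hits $\Phi_0$ in finite time. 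Your argument instead produces a \emph{lower} bound on the characteristic flow: since $u>\Phi_0$ and $\Phi$ is strictly decreasing, $X(t;0,y)\ge y+\underline a(\Phi_0-\Phi(y))t$, and pushing $f^{\rm in}$ along the flow while discarding the nonnegative nucleation term yields $M(t)\ge M(0)+\underline a C_0 t$ with $C_0=\int(\Phi_0-\Phi)f^{\rm in}\,dy>0$ (finite, since $0\le\Phi\le\Phi_0$); comparing with $M(t)=\rho-u(t)<\rho-\Phi_0$ gives a finite upper bound on $T$. Your route is more elementary (a linear rather than logarithmic growth bound) and, as you observe, dispenses with convexity, compact support, and the upper bound $a\le\overline a$, which in fact are \emph{not} needed to justify the representation formula either: the key point, worth making explicit, is that the linear uniqueness in Theorem~\ref{th:compilation} of the Annex forces \emph{every} solution in the sense of Definition~\ref{def:inflow} to coincide with the representation \eqref{eq:mild solution_maintext}, so the pushforward identity applies to an arbitrary putative global solution and not merely to the one built in the existence proof. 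Modulo that observation, which you gesture at but should state, your proof of Part~(2) is complete and strictly stronger than the paper's.
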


\begin{remark}
Point 1 covers the case of power law rates $b(x)=b_0x^{\beta}$ and $a(x)=a_0x^{\alpha}$ with $0\le \alpha < \beta < 1$. Note that when $\Phi_0=0$ we always have global existence. Point 2 states that any solution (regular enough) has to be local ($T<+\infty$); actually, the solution provided by Theorem \ref{thm:existence} in that case is in fact maximal because $\lim_{t\to T} u(t)=\Phi_0$.
\end{remark}

\begin{remark}
During the proof of point 2, we show that $u$ reaches $\Phi_0$ with a negative time derivative. This would allow to extend smoothly this solution past the time at which $u$ reaches $\Phi_0$ into an outflow solution for some time interval; this calls for a broader concept of solution to the Lifshitz-Slyozov equation, which would unify inflow and outflow solutions. Note that the situation is completely symmetric, in the sense that the arguments given in Section \ref{sec:global} can be adapted to  construct an outflow solution for which $u$ stays below $\Phi_0$ only for a finite time interval. 
\end{remark}

\subsection{Outline and methods of proofs}\label{ssec:outline}
We prove our existence result, Theorem \ref{thm:existence}, by means of a Schauder fixed point on $u$. This method was used before in \cite{Collet00} and makes use of representation formulas along characteristics in order to prove the continuity of the operator involved in the fixed point argument. Therefore, a detailed study of the {\em linear problem} (that is the continuity equation in \eqref{eq:LS-N} with known transport field \emph{i.e.} $u$ given in advance) together with \eqref{eq:LS-Nbc} is needed. This study can be performed in greater generality for a  broad class of degenerate transport fields that includes the one in \eqref{eq:LS-N}, an analysis that we deem of independent interest. Since this material is quite technical, we chose to quote the main results of this theory in Section \ref{sec:inflow}, and provide the details in an Annex. Once we have introduced the aforementioned machinery we can proceed to the analysis of the full nonlinear set of equations in Section \ref{sec:nonlinear}.  We start with the fixed point argument; this is done in Section \ref{sec:existence-inflow}. In fact, the fixed point strategy gives the existence of local-in-time solutions together with a continuation criterion: either  $T=\infty$ or $T<\infty$ with $u(t)\to \Phi_0$ as $t\to T$. Uniqueness of solutions, Theorem \ref{thm:uniqueness}, is proved in Section \ref{sec:uniqueness}; for that aim, we adapt the technique in \cite{Laurencot01}, which consists on proving Gronwall-type estimates for the tails densities. Finally, some examples of kinetic rates are discussed in Section \ref{sec:global} for which either global solutions can be constructed or local solutions cannot be extended further in time, which is Theorem \ref{thm:global local}. We complement the document with two annexes. In Section \ref{sec:annex}, we include a general framework to tackle a class of linear transport equations on a half-line with inflow boundary conditions and degenerate transport fields. We prove representation formulas along characteristic curves and integrability properties of the solutions thus given. Note that this annex   is written is such way that it can be read independently. We also include in a second annex, Section \ref{Ann:6}, some auxiliary results (somehow already contained in \cite{Laurencot01} without proofs) that are used in the uniqueness proof; their proofs are technically involved and placing them here allows for an easier navigation of the main text.

\section{Overview of the linear problem}

\label{sec:inflow}

All along this section we assume to be given $T>0$, $\rho>0$,  $\{a,b,\mathfrak{n}\}$ kinetic rates, and $u$ a function belonging to 
\[ \mathcal{BC}^+_\rho([0,T)) = \set{u}{ u :[0,T) \to [0,\rho]\ \text{continuous}} \,.\]
We denote 
\[ \overline u_T = \sup \set{u(t)}{ t\in[0,T)} \,, \quad \text{and} \quad \underline u_T = \inf \set{u(t)}{t\in[0,T)}\, .\]
Remark that, by definition, $0\leq \underline u_T \le \overline u_T \le \rho$.  Moreover, we assume that  $\underline u_T>\Phi_0$ and also that assumptions \eqref{H1} to \eqref{H5} hold. We define, for all $(t,x)\in\Omega_T$,
\[v(t,x):=a(x)u(t)-b(x)\:.\] 
During the rest of this Section we present several statements and properties that will be crucial for the existence proof in Section \ref{sec:nonlinear}. All of them will be proved, in greater generality, in the annex in Section \ref{sec:annex}.

\begin{lemma} 
    For any $(t,x)\in\Omega_T$, there exists a unique maximal solution to 
    \begin{equation} \label{eq:characteristics curves_maintext}
        \begin{array}{l}
        \displaystyle \dfrac{\partial }{\partial s}X(s;t,x) = v(s,X(s;t,x)) \, , \\[0.8em]
        \displaystyle X(t;t,x)=x 
    \end{array}
    \end{equation}
whose maximal interval is denoted by $\Sigma_{t,x}$. For every $s\in \Sigma_{t,x}$ we have  
\begin{equation} \label{eq:characteristics curves - derivatives in x and t - sec2}
\ds \dfrac{\partial }{\partial x}  X(s;t,x) \coloneqq J(s;t,x) = \exp \left( - \int_s^t  \left(\frac{\partial v}{\partial x} \right) (\tau,X(\tau;t,x))\, d\tau \right)\:.
\end{equation}
Moreover, as a consequence of \eqref{eq:sublinearity}, there exists a positive constant $C(T)$, independent of $u\in\mathcal{BC}^+_\rho([0,T))$, such that, for all $(t,x)$ in $\Omega_T$ and $s$ in $\Sigma_{t,x}$, we have
  \begin{equation} \label{eq:characteristics curves - uniform bounds_maintext}
	    X(s;t,x) + \left| \dfrac{\partial }{\partial s}X(s;t,x) \right|  \leq C(T)(1+x)\:. 
    \end{equation}
\end{lemma}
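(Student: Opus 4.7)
The plan is to combine classical Cauchy--Lipschitz theory on the open strip $[0,T)\times(0,\infty)$ with Gronwall's lemma, exploiting the sublinearity \eqref{eq:sublinearity}. Under \eqref{H1}--\eqref{H2} and the continuity of $u$, the vector field $v(s,y) = a(y)u(s) - b(y)$ is continuous on $[0,T)\times[0,\infty)$ and of class $\mathcal{C}^1$ in $y$ on $(0,\infty)$, uniformly in $s$ on compact subsets. It is therefore locally Lipschitz in $y$, uniformly in $s$, on any compact subset of $[0,T)\times(0,\infty)$. The classical Cauchy--Lipschitz theorem will produce, for each $(t,x)\in\Omega_T$, a unique maximal solution $X(\,\cdot\,;t,x)$ with values in $(0,\infty)$, defined on a maximal interval $\Sigma_{t,x}\subset[0,T)$ containing $t$.

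For the Jacobian identity \eqref{eq:characteristics curves - derivatives in x and t - sec2}, I would invoke the standard smooth-dependence-on-initial-data result: since $\partial_y v$ is continuous on $[0,T)\times(0,\infty)$, the map $x \mapsto X(s;t,x)$ is of class $\mathcal{C}^1$ on the open set where it is defined, and $J(s) := \partial_x X(s;t,x)$ solves the variational equation $\partial_s J(s) = (\partial_y v)(s, X(s;t,x))\, J(s)$ with $J(t)=1$. Integrating this scalar linear ODE yields exactly the exponential formula claimed.

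For the uniform bound \eqref{eq:characteristics curves - uniform bounds_maintext}, I would observe that for any $(s,y)\in[0,T)\times[0,\infty)$, using $u(s)\in[0,\rho]$ and \eqref{eq:sublinearity},
\[
|v(s,y)| \leq a(y)u(s) + b(y) \leq (1+\rho)\bigl(a(y)+b(y)\bigr) \leq K_r(1+\rho)(1+y).
\]
Applied along the trajectory, this reads $|\partial_s X(s;t,x)| \leq K_r(1+\rho)(1 + X(s;t,x))$, so Gronwall's lemma applied to $s \mapsto 1 + X(s;t,x)$ gives
\[
1 + X(s;t,x) \leq (1+x)\, e^{K_r(1+\rho)|s-t|} \leq (1+x)\, e^{K_r(1+\rho)T}
\]
for every $s\in\Sigma_{t,x}$, with a constant depending only on $T$ and the rates, not on $u\in\mathcal{BC}^+_\rho([0,T))$. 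Feeding this back into the sublinearity estimate yields the analogous bound on $|\partial_s X|$, and hence \eqref{eq:characteristics curves - uniform bounds_maintext}.

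The main subtlety is that $v$ need not be Lipschitz up to $y=0$, so $\Sigma_{t,x}$ may be strictly smaller than $[0,T)$, namely when the characteristic approaches the boundary in finite backward time. This is harmless for the present statement, which concerns only $s\in\Sigma_{t,x}$; however, understanding what happens at the endpoints of $\Sigma_{t,x}$ is the real work, and is the reason why the finer behavior-at-the-boundary analysis is deferred to the annex.
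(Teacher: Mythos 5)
Your proof is correct and follows essentially the same route as the paper: Cauchy--Lipschitz theory on the open half-line for existence/uniqueness/maximality, the variational equation for the Jacobian formula (the paper cites Hartman's textbook for this), and sublinearity combined with Gronwall's lemma for the uniform bound. The paper's version (Lemma \ref{lem:ode-X} in the annex) is just terser and defers the boundary analysis, exactly as you anticipate.
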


In order to construct a solution to the Lifshitz--Slyozov equation \eqref{eq:LS-N} through the so-called characteristics formulation, we need to know the lifetime of these characteristics, given by the lower and upper ends of $\Sigma_{t,x}$. Particularly, we need to identify which characteristics go back to some positive $x$ at time $s=0$ and which ones go back to the boundary $x=0$ in positive time $s>0$. We can translate this problem into the study of the time
\[ \sigma_{t}(x) \coloneqq \inf \{ s>0\,, s\in \Sigma_{t,x}\} \]
for each $(t,x)\in\, \Omega_T$,  which represents the time the curve $s\mapsto (s,X(s;t,x))$ reaches one of the two boundary axes $x=0$ or $s=0$ in $\Omega_T$. Note that for $t=0$, we readily have $\sigma_{0}(x)=0$ for every $x>0$.

Next we provide a rigorous sense for the concept of characteristic curves starting from $x=0$ at a positive time; this cannot be achieved directly from 
\eqref{eq:characteristics curves_maintext} due to the lack of derivative at the origin. Nevertheless, the analysis of the map $x\mapsto \sigma_t(x)$ at each time $t$ allows us to single out a unique characteristic curve starting from $x=0$ at a time $s\in(0,t)$. This provides an interpretation of $X(t;s,0)$ as the inverse of $\sigma_t(x)$, that is  $X(t;s,0)=\sigma_t^{-1}(s)$. These considerations are intimately related with the fact that $a(x)$ is the driving term at $x=0$ in the differential equation~\eqref{eq:characteristics curves} whenever $\underline u_T>\Phi_0$. Namely,
\[ v(t,x)=a(x) (u(t)-\Phi(x))= a(x) [u(t)-\Phi_0 + (\Phi_0-\Phi(x))] \]
but $\Phi_0-\Phi(x)$ has little influence when $x$ is close to the origin. Then an integrability condition for $1/a$ at the origin, by assumption \eqref{H3}, arises naturally, see e.g. \cite{Crippa}.

We shall show in the Annex (\emph{c.f.} Lemma \ref{lem:either-or property for sigma}) that when $\sigma_t(x)>0$, the characteristic curve reaches the axis $x=0$ at time $\sigma_t(x)$. Moreover, uniqueness of solutions to~\eqref{eq:characteristics curves_maintext} yields that the family of characteristic curves is a totally ordered family; therefore, we may tell whether characteristic curves came back from zero or not in terms of the separating point 
\begin{equation*}
    x_c(t)\coloneqq\inf\set{x>0}{\sigma_t(x)=0} 
\end{equation*}
defined for each $t$ in $[0,T)$. It can be proved that this defines a positive number, such that $\sigma_t$ is positive and nonincreasing in $(0,x_c(t))$.
In fact $t\mapsto x_c(t)$ can be interpreted as the characteristic curve starting from $x=0$ at time zero, as we state below. Also, note that the characteristic curves $s\mapsto X(s;t,x)$ do not leave $\Omega_T$ for $s\in (t,T)$, justifying the terminology of ``inflow''. Now we state a result that paves the way for the use of characteristics. 

\begin{prop} \label{prop:diffeomorphism_merge}
    For each $t\in(0,T)$, the map $x\mapsto X(t;0,x)$ is an increasing $\mathcal{C}^1$-diffeo\-morph\-ism from $(0,\infty)$ to $(x_c(t),\infty)$ with derivative given by $J(t;0,x)$ in Eq.~\eqref{eq:characteristics curves - derivatives in x and t - sec2} and the map $s\mapsto\sigma_t^{-1}(s)$ is a decreasing $\mathcal{C}^1$-diffeomorphism from $(0,t)$ to $(0,x_c(t))$ satisfying, for some constant $C(T)$ independent of the given $u \in \mathcal{BC}^+_\rho$ and $t\in(0,T)$, that $\sigma_t^{-1}(s) \leq C(T)$ for all $s\in (0,t)$. Moreover, we have that $\lim_{x\to 0^+}X(t;0,x)=x_c(t)$ and $\sigma_t^{-1}(s) = \lim_{x\to 0^+} X(t;s,x)$ for all $(t,x)\in\Omega_T^*$.
\end{prop}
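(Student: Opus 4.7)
The plan is to treat the three claims in order: (1) the $C^1$-diffeomorphism property of $x \mapsto X(t;0,x)$; (2) the same for $s \mapsto \sigma_t^{-1}(s)$ together with the uniform bound; and (3) the two limit identities, which I would obtain from a single compactness argument for characteristics with vanishing initial size.

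For (1), the preceding lemma already gives $C^1$-regularity and $J(t;0,x) > 0$, so $x \mapsto X(t;0,x)$ is strictly increasing and open onto its image. That image lies in $(x_c(t), \infty)$ because $\sigma_t(X(t;0,x)) = 0$ for every $x > 0$, and reaches arbitrarily large values through the monotone control in \eqref{eq:characteristics curves - uniform bounds_maintext}. To get surjectivity onto $(x_c(t), \infty)$, pick $y > x_c(t)$: by definition of $\sigma_t$ the backward characteristic from $(t, y)$ is defined on $(0, t]$ and admits a limit $y' \in [0, \infty)$ at $s = 0^+$ via the bound on $|\partial_s X|$. Provided $y' > 0$, uniqueness of the ODE around $y'$ yields $X(t; 0, y') = y$, so $y$ belongs to the image. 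The main obstacle is to exclude $y' = 0$ for $y > x_c(t)$; I would do so by the change of variables $\tilde{x} = A(x) = \int_0^x ds/a(s)$, well-defined by (H3), under which the transported velocity $V(s, \tilde{x}) = u(s) - \Phi(A^{-1}(\tilde{x}))$ is $W^{1,1}$ and strictly positive near $\tilde{x} = 0$ (thanks to $\underline{u}_T > \Phi_0$ and (H4)). This restores forward uniqueness at the corner and prevents a whole interval of characteristics from collapsing onto $(0,0)$.

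For (2), I would invoke the structural result, to be established in the Annex, that $\sigma_t$ is $C^1$ and strictly decreasing on $(0, x_c(t))$ with boundary values $t$ at $0^+$ and $0$ at $x_c(t)^-$; the inverse function theorem then provides that $\sigma_t^{-1}$ is a $C^1$-diffeomorphism from $(0, t)$ onto $(0, x_c(t))$. Identifying $\sigma_t^{-1}(s)$ with $X(t; s, 0)$ and applying the uniform bound \eqref{eq:characteristics curves - uniform bounds_maintext} at $x = 0$ then produces $\sigma_t^{-1}(s) \leq C(T)$; the constant depends only on $T$ and on the sublinear constant $K_r$ from \eqref{eq:sublinearity}, hence is uniform in $u \in \mathcal{BC}^+_\rho$.

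For (3), fix $s \in [0, t)$ and a sequence $x_n \to 0^+$. By \eqref{eq:characteristics curves - uniform bounds_maintext} the family $\{X(\cdot; s, x_n)\}$ is equibounded and equicontinuous on $[s, t]$, so by Arzel\`a--Ascoli a subsequence converges uniformly to some $Y \in C([s, t])$ with $Y(s) = 0$; passing to the limit in the integral form of \eqref{eq:characteristics curves_maintext} shows $Y$ solves the characteristic ODE on $(s, t]$, and monotonicity of $X(\cdot; s, x)$ in $x$ upgrades subsequential to full convergence. The rigidity argument from step (1) identifies the limit uniquely, yielding $Y(t) = \sigma_t^{-1}(s)$ when $s > 0$ and $Y(t) = x_c(t)$ when $s = 0$. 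Throughout, the recurring difficulty is the loss of Lipschitz regularity at the corner $(0,0)$, handled uniformly by the integrability hypothesis (H3) together with the change-of-variables argument of step (1).
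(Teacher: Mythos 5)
The overall architecture you propose — argue diffeomorphism and surjectivity of $x\mapsto X(t;0,x)$, defer the $\sigma_t^{-1}$ structure to the Annex, obtain the limits by an Arzel\`a--Ascoli argument and identify the limit by rigidity — matches the paper's organization. However, there is a genuine gap at the pivotal step, and it is precisely where the paper's real work lies.

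Your argument for surjectivity onto $(x_c(t),\infty)$, i.e.\ for $\lim_{x\to 0^+}X(t;0,x)=x_c(t)$, rests on the assertion that the reparametrized velocity $V(s,\tilde x)=u(s)-\Phi(A^{-1}(\tilde x))$ being $W^{1,1}$ in $\tilde x$ and strictly positive near $\tilde x=0$ ``restores forward uniqueness at the corner'' and so prevents an interval of characteristics from collapsing onto $(0,0)$. Stated at that level of generality, this is false: for example $V(s,\tilde x)=1+\sqrt{|\tilde x-s|}\,\mathrm{sign}(\tilde x-s)$ is continuous, satisfies $V\geq 1/2$ near the corner, has $\sup_s\|\partial_{\tilde x}V(s,\cdot)\|_{L^1(0,1)}<\infty$, yet both $B(s)=s$ and $B(s)=s+s^2/4$ solve $\dot B=V(s,B)$, $B(0)=0$. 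The positivity-plus-integrability mechanism only works here because of the additional structure of $V$ (in the general Annex framework, because of \eqref{A4} and \eqref{A5}; in the Section~2 setting, because $V$ is separated into a time part $u(s)$ and a space part $\Phi(A^{-1}(\tilde x))$). The actual mechanism, which you do not supply, is the uniform lower bound on the Jacobian $\partial_y B(s;t,y)$: using $\dot B\geq\delta$ one changes variables $r\mapsto B(r)$ along the characteristic to convert $\int |\partial_{\tilde x}V(r,B(r))|\,dr$ into $\tfrac{1}{\delta}\int |\Phi'(y)|\,dy$, which is finite by \eqref{H4}. This is the content of Lemma~\ref{lem:decomposition a phi}, and it is used in Step~3 of Lemma~\ref{prop:diffeo} to give a positive lower bound, uniform in $s$, on $B(s;t,A(y_2))-B(s;t,A(y_1))$, which rules out the collapse. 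Without this estimate, ``forward uniqueness'' cannot be invoked, and the identification $Y(t)=\sigma_t^{-1}(s)$ (respectively $Y(t)=x_c(t)$) in your Step (3) also falls through, since it uses the same rigidity. In short: you have located the correct object (the reparametrized flow in $A$-coordinates, the positivity from $\underline u_T>\Phi_0$, the integrability from \eqref{H4}) but you have skipped the nontrivial estimate that converts these ingredients into the no-collapse statement; citing ``$W^{1,1}$ plus positivity implies forward uniqueness'' as a known fact is not a valid substitute, because that statement is not true for general non-autonomous 1D fields.

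Two further, smaller remarks. For part (2) you simply invoke ``the structural result, to be established in the Annex,'' which is acceptable within the paper's organization but is not a proof; the continuity and strict monotonicity of $\sigma_t$ in Lemma~\ref{lem:sigma homeo}, and the $C^1$-regularity and formula~\eqref{prop:derivative of sigma-1} in Proposition~\ref{prop:diffeomorphism sigma_t}, again rest on the same Lemma~\ref{lem:decomposition a phi} estimate, so the gap propagates. Finally, your bound $\sigma_t^{-1}(s)\leq C(T)$ via \eqref{eq:characteristics curves - uniform bounds_maintext} at $x\to 0$ is correct, and the Arzel\`a--Ascoli step is fine but could be replaced by monotone pointwise convergence since $x\mapsto X(\tau;s,x)$ is increasing.
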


 Once we have these statements we can provide a representation formula for the solutions of the linear problem. Let $f^{\rm in}$ be a nonnegative measurable function on $(0,\infty)$. Thanks to Proposition \ref{prop:diffeomorphism_merge}, we define for a.e. $(t,x)\in \Omega_T^*$ 
\begin{equation}
\label{eq:mild solution_maintext}
f(t,x) \!= \!f^{\rm in}(X(0;t,x)) J(0;t,x) \mathbf 1_{(x_c(t),\infty)}(x) \!+\! \mathfrak n(u(\sigma_t(x)))|\sigma_t'(x)|\mathbf 1_{(0,x_c(t))}(x)\,,
\end{equation}
where $\mathbf 1_I$ stands for the indicator function of an interval $I$. Indeed, $\sigma_t'(x) = 1/(\sigma_t^{-1}{}'(\sigma_t(x))$ is defined for all $t\in(0,T)$ and $x\in(0,x_c(t))$. Note that \eqref{eq:mild solution_maintext} makes sense as null sets are mapped to null sets under the diffeomorphism in Proposition \ref{prop:diffeomorphism_merge}. In view of the results obtained in the annex in Section \ref{sec:annex}, this construction satisfies the next proposition. 
\begin{prop} \label{eq:f before schauder}
 Assume $f^{\rm in}$ satisfies \eqref{H6} and $u(t)$ is given in advance. Then $f$ given by  \eqref{eq:mild solution_maintext} belongs to the space $\mathcal C\left([0,T);w-L^1((0,\infty),(1+x)dx)\right)$, satisfies the weak formulation \eqref{eq:LS-weak-time-density-boundary} and it also satisfies point 2 of Theorem \ref{thm:existence}.  Similarly, $f$ satisfies the moment equation \eqref{eq:LS-moment-equation} too.
\end{prop}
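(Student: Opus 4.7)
The plan is to read off the conclusions of this proposition as corollaries of the general machinery of the annex (Section \ref{sec:annex}), once specialized to the transport field $v(t,x)=a(x)u(t)-b(x)$. All the structural results needed (maximal characteristic interval, behavior of $\sigma_t$, the diffeomorphism property of Proposition \ref{prop:diffeomorphism_merge}, continuous dependence on $t$) are supplied there; the task here is to verify that the explicit formula \eqref{eq:mild solution_maintext} realizes the four required properties.

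First I would split $f=f_1+f_2$ according to the two summands of \eqref{eq:mild solution_maintext} and use the change of variables $y=X(0;t,x)$ on $(x_c(t),\infty)$ and $s=\sigma_t(x)$ on $(0,x_c(t))$ respectively. These turn the integrals $\int_0^\infty (1+x)f_i(t,x)\,dx$ into
\begin{equation*}
\int_0^\infty \bigl(1+X(t;0,y)\bigr)f^{\rm in}(y)\,dy\quad\text{and}\quad\int_0^t\bigl(1+\sigma_t^{-1}(s)\bigr)\mathfrak n(u(s))\,ds,
\end{equation*}
which are uniformly bounded on $[0,T)$ thanks to \eqref{eq:characteristics curves - uniform bounds_maintext}, \eqref{H6} and the uniform bound $\sigma_t^{-1}\le C(T)$ together with continuity of $\mathfrak n$ on $[\Phi_0,\rho]$. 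Weak $L^1((1+x)dx)$-continuity in $t$ is then obtained by applying the same change of variables with a test function $\psi\in L^\infty((0,\infty),(1+x)dx)$ inserted, invoking dominated convergence (granted by the continuous dependence of $X$, $J$, $\sigma_t^{-1}$ and $x_c$ on $t$), and extending to general $\psi$ by density using the uniform $L^1$ bound.

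Second, to establish the weak formulation \eqref{eq:LS-weak-time-density-boundary} I would plug any $\varphi\in\mathcal C^1_c([0,T)\times[0,\infty))$ into the left hand side, split $x\gtrless x_c(t)$ and change variables once more. On the $f_1$-piece the integrand reduces, via the chain rule $\tfrac{d}{dt}\varphi(t,X(t;0,y))=\partial_t\varphi+v\,\partial_x\varphi$, to a total derivative whose time integration yields the initial-data contribution $-\int_0^\infty\varphi(0,y)f^{\rm in}(y)\,dy$. On the $f_2$-piece a Fubini swap $t\leftrightarrow s=\sigma_t(x)$, followed by the same chain rule along the characteristic $\tau\mapsto X(\tau;s,0)$, delivers $-\int_0^T\varphi(s,0)\mathfrak n(u(s))\,ds$. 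The initial trace $f(0,x)=f^{\rm in}(x)$ follows from $x_c(0)=0$, $X(0;0,x)=x$ and $J(0;0,x)=1$; the boundary trace is handled via the identity $|\sigma_t'(x)|=J(\sigma_t(x);t,x)/|v(\sigma_t(x),0)|$, obtained by differentiating $X(\sigma_t(x);t,x)=0$, passing to the limit $x\to 0^+$ with $\sigma_t(x)\to t$, $J\to 1$ and using continuity of $\mathfrak n\circ u$. The moment equation \eqref{eq:LS-moment-equation} then follows from the weak formulation by the argument of Lemma \ref{lem:moments equations}.

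The main difficulty I foresee is the rigorous boundary trace in the degenerate case $v(t,0^+)=0$ (e.g. power-law rates with $a(0)=b(0)=0$), where both $v(t,x)$ and $|\sigma_t'(x)|^{-1}$ vanish as $x\to 0^+$. Their cancellation is ultimately a manifestation of the integrability assumption \eqref{H3} on $1/a$ and of the diffeomorphism of Proposition \ref{prop:diffeomorphism_merge}; once this compensation is set up in the annex, the remaining verifications reduce to routine applications of change of variables and dominated convergence.
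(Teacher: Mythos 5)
Your overall strategy matches the paper's: Proposition~\ref{eq:f before schauder} is indeed proved there simply by specializing the annex machinery (specifically Theorem~\ref{th:compilation}, Lemma~\ref{lem:compactness} and the weak-formulation lemma following it) to $v(t,x)=a(x)u(t)-b(x)$, $w(t,x)=u(t)-\Phi(x)$ and $G(t)=\mathfrak n(u(t))$, and the representation formula~\eqref{eq:mild solution_maintext} coincides with~\eqref{eq:mild solution}. Your splitting $f=f_1+f_2$, the change of variables $y=X(0;t,x)$ and $s=\sigma_t(x)$, the chain rule along characteristics and the Fubini swap on the boundary piece are exactly the steps in the annex.

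Two details in your sketch need tightening, though.  First, for the continuity in $w$-$L^1((0,\infty),(1+x)dx)$ you write ``extending to general $\psi$ by density using the uniform $L^1$ bound''; this is not enough, since continuous test functions are not $L^\infty$-dense. What is needed is the \emph{weak relative compactness} of $\{f(t,\cdot)\}_{t\in[0,T)}$, i.e.\ uniform integrability and tightness, which is precisely what Lemma~\ref{lem:compactness} establishes via Dunford--Pettis and what the paper then combines with the continuity of the moments $t\mapsto\int h f(t)\,dx$ for nice $h$ to upgrade to weak continuity. The uniform $L^1((1+x)dx)$ bound alone does not rule out concentration.  Second, the identity you propose for the boundary trace, $|\sigma_t'(x)| = J(\sigma_t(x);t,x)/|v(\sigma_t(x),0)|$, is formally degenerate: $J(s;t,x)=\exp(-\int_s^t\partial_x v(\tau,X(\tau;t,x))\,d\tau)$ need not have a finite nonzero limit as $s\to\sigma_t(x)^+$ (since $\partial_x v$ may blow up near $x=0$), and $v(\sigma_t(x),0)=a(0)w(\sigma_t(x),0)$ vanishes when $a(0)=0$. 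The clean way to effect the cancellation you anticipate is the one the paper uses: differentiate along the reparametrized flow $B$, which yields (cf.~\eqref{prop:derivative of sigma-1},~\eqref{eq:deriv stigmatx})
\[
\sigma_t'(x) = -\frac{1}{a(x)\,w(\sigma_t(x),0)}\,\exp\!\left(\int_{\sigma_t(x)}^t (a\,\partial_x w)(\tau,X(\tau;t,x))\,d\tau\right),
\]
with the exponential controlled by Lemma~\ref{lem:decomposition a phi} (i.e.\ assumptions \eqref{H3}--\eqref{H4}); multiplying by $v(t,x)=a(x)w(t,x)$ cancels $a(x)$ explicitly, and the remaining factor $w(t,x)/w(\sigma_t(x),0)\cdot\exp(\cdots)\to 1$ as $x\to 0^+$, giving the trace.
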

In the light of this result, to prove Theorem \ref{thm:existence} we are to couple \eqref{eq:mild solution_maintext} with \eqref{eq:def u}. This is what we do in the next section.

\section{The nonlinear problem}
\label{sec:nonlinear}

\subsection{Existence of solutions} \label{sec:existence-inflow}

We follow the lines of \cite{Collet00} to show existence of local-in-time inflow solutions via the Schauder fixed point theorem. All along this section we assume to be given $T>0$, $\rho>0$ and $\{a,b,\mathfrak{n}\}$ kinetic rates. Moreover, we assume that $ \Phi_0 < \rho$ and we let $f^{\rm in}\in L^1((0,\infty),(1+x)\,dx)$ be nonnegative and such that
\[ u^{\rm in} \coloneqq \rho - \int_0^\infty xf^{\rm in}(x)\,dx > \Phi_0 \,.\]
Let $\delta>0$ such that $2\delta < u^{\rm in} -\Phi_0$, and define 
\[ \mathcal B_\delta ([0,T))= \set{ u\in  \mathcal{BC}_\rho^+([0,T))}{ u(0)=u^{\rm in} \ \text{and}\ \Phi_0+\delta \leq u(t) \leq \rho, \ \forall t\in[0,T)}\,.\]
For each $u\in\mathcal B_\delta([0,T))$, we can define the density $f$  given by Eq.~\eqref{eq:mild solution_maintext} and then the function
\[ v(t)  = G(u)(t) = \left( \rho- \int_0^\infty xf(t,x)\, dx \right) \vee (\Phi_0+\delta)\]
for all $t\in[0,T)$ where $x\vee y$ denotes the maximum between $x$ and $y$ in $\mathbb R$. Our aim in this section is to prove the existence of a fixed point for the operator $u\mapsto G(u)$. We observe by construction that $\Phi_0+\delta\leq v(t) \leq \rho$; moreover, a straightforward consequence of Theorem \ref{th:compilation}
in the Annex is that the first moment of $f$ has a derivative belonging to $L^\infty(0,T)$ thanks to \eqref{eq:sublinearity}. So has $v$, and we identify it, for \emph{a.e.} $t\in(0,T)$, as
\begin{equation}
\nonumber
v'(t)  = \begin{cases}
\displaystyle - \frac{d}{dt} \int_0^\infty xf(t,x)\,dx, & \text{if}\ \rho - \int_0^\infty xf(t,x)\,dx \geq \Phi_0 + \delta \\
0, & \text{otherwise,}
\end{cases}
\end{equation}
where 
\[\frac{d}{dt}\int_0^\infty x f(t,x)dx = \int_0^\infty (a(x)u(t)-b(x))f(t,x)dx.\]
Hence $v$ is continuous, and thus $G$ is a map from $\mathcal B_\delta([0,T))$ into itself. Moreover, it follows from Lemma \ref{lem:compactness} in the Annex that the derivative $v'$ above is uniformly bounded on $(0,T)$, independently on $u$. Thus, the image of $\mathcal B_\delta([0,T))$ is compact for the uniform topology. The remainder of this section is devoted to prove the continuity of the operator $G$ and then Theorem \ref{thm:existence}.

In the sequel, for a given sequence $\{u^n\}$ in $\mathcal{B_\delta}$, we denote by $X^n$ the solution to Eq.~\eqref{eq:characteristics curves_maintext} associated with $u^n$ and we denote by $\sigma_t^{-1,n}$ the inverse function of $\sigma_t^n$ associated with $X^n$.      

\begin{lemma} \label{lem:limit-Xn}
	Let $\{u^n\}$ be a sequence in $\mathcal B_\delta([0,T))$ converging (uniformly) to $u$.  For each $x>0$,  $\{X^n(\cdot ;0,x)\}$ converges uniformly to $X(\cdot;0,x)$ on $[0,T)$ as $n\to \infty$.
\end{lemma}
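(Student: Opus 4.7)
The plan is to reduce the statement to a standard Gronwall estimate on $Y^n(s) := X^n(s;0,x) - X(s;0,x)$, which by definition of the characteristic ODE satisfies
\[
\partial_s Y^n(s) = a(X^n)\bigl(u^n(s) - u(s)\bigr) + u(s)\,\bigl[a(X^n) - a(X)\bigr] - \bigl[b(X^n) - b(X)\bigr].
\]
The main obstacle is that $a$ and $b$ need not be Lipschitz at the singular endpoint $x=0$, so before invoking Gronwall I must confine both trajectories to a compact subinterval of $(0,\infty)$ on which the $C^1$-regularity from \eqref{H1} delivers a Lipschitz bound. The uniform upper bound $X^n(s;0,x) \leq C(T)(1+x) =: M$ is already provided by \eqref{eq:characteristics curves - uniform bounds_maintext}; the real work is a uniform positive lower bound.

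For the lower bound I would exploit $u^n(s) \geq \Phi_0 + \delta$ (built into the definition of $\mathcal B_\delta([0,T))$) together with the continuity of $\Phi$ at the origin coming from the very definition of $\Phi_0$. Choose $\eta > 0$ such that $|\Phi(y) - \Phi_0| < \delta/2$ on $(0,\eta]$, and set $m := \min(x,\eta) > 0$. On $(0,\eta]$ the velocity field then obeys
\[
v^n(s,y) = a(y)\bigl(u^n(s) - \Phi(y)\bigr) \geq \tfrac{\delta}{2}\,a(y) > 0,
\]
the strict positivity of $a(y)$ for $y>0$ being part of \eqref{H3}. A classical barrier argument at the level $y=m$ then yields $X^n(s;0,x)\geq m$ for all $s \in [0,T)$: if $s^* > 0$ were a first time of contact with the level $m$, then $\partial_s X^n(s^*;0,x) \leq 0$ would contradict $v^n(s^*,m)>0$, and the case $s^* = 0$ with $x = m \leq \eta$ is excluded likewise because $v^n(0,x) > 0$ forces $X^n$ to increase immediately. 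Since $u$ is a uniform limit of elements of $\mathcal B_\delta([0,T))$ it lies in $\mathcal B_\delta([0,T))$ as well, and the identical bound applies to $X(\cdot;0,x)$.

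Both curves now taking values in the compact set $[m,M]$, the functions $a$ and $b$ are Lipschitz there with a common constant $L$ (by \eqref{H1}). Writing $\|a\|_M := \sup_{[m,M]} a$, the identity above yields
\[
|\partial_s Y^n(s)| \leq \|a\|_M\,\|u^n - u\|_{L^\infty([0,T))} + (\rho+1)\,L\,|Y^n(s)|,
\]
and since $Y^n(0) = 0$, Gronwall's lemma gives
\[
\|Y^n\|_{L^\infty([0,T))} \leq \|a\|_M\,T\,e^{(\rho+1)LT}\,\|u^n - u\|_{L^\infty([0,T))},
\]
which tends to zero by the uniform convergence of $u^n$. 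As anticipated, the entire difficulty was the confinement step: the defining condition $u^n \geq \Phi_0 + \delta$ of $\mathcal B_\delta$ is tailor-made to produce an inward-pointing velocity near the origin and thereby a barrier keeping all trajectories uniformly inside the good-regularity regime.
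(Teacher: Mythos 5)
Your proof is correct, but it follows a genuinely different route than the paper's. The paper argues by compactness: the uniform bounds on $X^n$ and on $\partial_s X^n$ (the latter via local boundedness of $a,b$ on $[0,C(T)(1+x)]$) give equicontinuity, Arzel\`a--Ascoli extracts a subsequential limit $Y$, one passes to the limit in the integral form of the ODE using continuity of $a,b$ on $[0,\infty)$, and finally $Y=X(\cdot;0,x)$ by uniqueness for the characteristic ODE (which implicitly rests on the confinement $X(s;0,x)\geq\min(x,x_0)$ established in the Annex, so that the trajectory stays in the $\mathcal C^1$-regularity region). You instead make the confinement step explicit: the barrier argument using $u^n\geq\Phi_0+\delta$ and the continuity of $\Phi$ at $0^+$ reconstructs the lower bound $X^n(s;0,x)\geq m>0$ directly (essentially re-deriving the relevant part of Lemma~\ref{lem:invariance}), after which $a$ and $b$ are Lipschitz on $[m,M]$ and a standard Gronwall estimate closes the argument. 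Your route has the advantage of being self-contained and quantitative --- it yields the explicit rate $\|X^n-X\|_\infty \leq C\,\|u^n-u\|_\infty$, which the paper's soft compactness argument does not --- at the cost of duplicating a piece of the Annex machinery.
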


\begin{proof}
Fix $x>0$. Thanks to the bounds in Eq.~\eqref{eq:characteristics curves - uniform bounds_maintext} and the continuity in the second variable of $X^n$, 
\[ |X^n(s;0,x)| \leq C(T)(1+x) \]
for all $s\in(0,T)$, with some constant $C(T)>0$ independent on  $n$. Moreover,
\[ \left|{\frac{\partial}{\partial s}} X^n(s;0,x)\right| \leq K (\rho +1) T \]
where $K$ can be taken as the maximum of $a$ and $b$ on the interval $[0,C(T)(1+x)]$. Thus the sequence $\{X^n(\cdot;0,x)\}$ is relatively compact and, up to a subsequence, converges to a continuous function $Y$ on $[0,T)$. Inspecting the equation on $X^n$ we realize that the limit satisfies
\[ Y(s) = x + \int_0^s (a(Y(\tau))u(\tau) -b(Y(\tau)))\,  d\tau\,.\]
Thus $Y$ is the unique solution to Eq.~\eqref{eq:characteristics curves_maintext} with $Y(0)=x$ and therefore it coincides with the characteristic curve $s\mapsto X(s;0,x)$ associated to $u$. By uniqueness of the limit the full sequence converges and the result follows. 
\end{proof}

\begin{lemma} \label{lem:limit-sigman}
	Let $\{u^n\}$ be a sequence in $\mathcal B_\delta([0,T))$ converging (uniformly) to $u$.  For each $t\in(0,T)$, $\{\sigma_t^{-1,n}\}$ converges pointwise to $\sigma_t^{-1}$ as $n\to \infty$.
\end{lemma}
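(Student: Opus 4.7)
The plan is to prove the pointwise convergence via matching upper and lower bounds. Write $y_n := \sigma_t^{-1,n}(s)$ and $y := \sigma_t^{-1}(s)$. First I would observe that the proof of Lemma~\ref{lem:limit-Xn} extends verbatim to arbitrary starting data $(s_0, x_0)$ with $x_0 > 0$, giving $X^n(\cdot; s_0, x_0) \to X(\cdot; s_0, x_0)$ locally uniformly on the maximal interval of existence; I use this repeatedly.

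For the upper bound, the representation $y_n = \lim_{\epsilon \to 0^+} X^n(t; s, \epsilon)$ from Proposition~\ref{prop:diffeomorphism_merge} and the monotonicity of $X^n(t; s, \cdot)$ yield $y_n \leq X^n(t; s, \epsilon)$ for every $\epsilon > 0$. Passing to the limit $n \to \infty$ at fixed $\epsilon$, and then $\epsilon \to 0^+$, gives $\limsup_n y_n \leq y$.

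For the lower bound, it suffices to show that for each $y' \in (0, y)$ one has $\sigma_t^n(y') > s$ for $n$ large, since then $y_n \geq y'$ by inversion, and $y' \to y^-$ concludes. Since $\sigma_t$ is the decreasing inverse of $\sigma_t^{-1}$, we have $\sigma_t(y') > s$, so I fix $s^\dagger \in (s, \sigma_t(y'))$. I then compare, under $u^n$, the forward characteristic from $(s^\dagger, \epsilon_0)$ (for small $\epsilon_0 > 0$) with the backward characteristic from $(t, y')$. By Proposition~\ref{prop:diffeomorphism_merge}, $X(t; s^\dagger, \epsilon_0) \to \sigma_t^{-1}(s^\dagger) > y'$ as $\epsilon_0 \to 0^+$, so $\epsilon_0$ can be chosen small enough that $X(t; s^\dagger, \epsilon_0) > y'$. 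The extended Lemma~\ref{lem:limit-Xn} then gives $X^n(t; s^\dagger, \epsilon_0) > y'$ for $n$ large. The non-crossing property of ODE trajectories forces, in the only nontrivial case $\sigma_t^n(y') < s^\dagger$, that $X^n(s^\dagger; t, y') \leq \epsilon_0$.

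The argument then closes with a quantitative travel-time estimate near the boundary. Since $u^n \geq \Phi_0 + \delta$ and $\Phi(X) \to \Phi_0$ as $X \to 0^+$, we have $v^n(\tau, X) \geq \tfrac{\delta}{2}\, a(X)$ for $X$ sufficiently small. Along the backward trajectory $X^n(\cdot; t, y')$, which starts at time $s^\dagger$ from a value at most $\epsilon_0$ and stays nonincreasing going backward (hence remains small), separation of variables gives
\[
 s^\dagger - \sigma_t^n(y') \leq \frac{2}{\delta} \int_0^{\epsilon_0} \frac{dz}{a(z)},
\]
which tends to $0$ as $\epsilon_0 \to 0^+$ by hypothesis~\eqref{H3}. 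Choosing $\epsilon_0$ so that this is smaller than $s^\dagger - s$ gives $\sigma_t^n(y') > s$ for all large $n$. The main obstacle I anticipate is precisely this quantitative near-boundary control: Lemma~\ref{lem:limit-Xn} only supplies convergence of characteristics away from the boundary, and it is the integrability assumption $1/a \in L^1(0,1)$ that bridges the gap by bounding from above the time a characteristic can spend near $x=0$.
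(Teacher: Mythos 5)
Your proof is correct but takes a genuinely different route from the paper. The paper's argument is a compactness proof: it extracts a convergent subsequence from the uniformly bounded sequence $x^n := \sigma_t^{-1,n}(s)$, applies Arzel\`a--Ascoli to the curves $\tau \mapsto X^n(\tau;t,x^n)$ on $[s,t]$, passes to the limit in the integral form of the ODE to identify the limiting curve as a characteristic $X(\cdot;t,x)$ of the limit field, and then reads off $\sigma_t(x)=s$ from the boundary value $X^n(s;t,x^n)=0$; uniqueness of the possible subsequential limit gives convergence of the full sequence. You instead prove matching $\limsup$/$\liminf$ bounds. The $\limsup$ side is cheap (monotonicity of $X^n(t;s,\cdot)$ plus forward stability of characteristics from Lemma~\ref{lem:limit-Xn}). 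The $\liminf$ side is genuinely new relative to the paper: you combine a comparison argument --- squeezing the backward characteristic through $(t,y')$ below $\epsilon_0$ at an intermediate time $s^\dagger$ --- with a quantitative travel-time bound near $x=0$, exploiting $u^n\geq\Phi_0+\delta$ to get $v^n\geq\tfrac{\delta}{2}a$ near the origin and hypothesis \eqref{H3} to bound the time a trajectory can spend there. Note that your travel-time argument automatically handles the case $y'\geq x_c^n(t)$ (where $\sigma_t^n(y')=0$) without separate treatment, since the estimate $s^\dagger-\sigma_t^n(y')\leq\tfrac{2}{\delta}\int_0^{\epsilon_0}\tfrac{dz}{a(z)}<s^\dagger-s$ forces $\sigma_t^n(y')>s>0$ anyway. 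What the paper's approach buys is brevity and a uniform treatment of both halves of the bound in one stroke; what yours buys is transparency on precisely where the integrability of $1/a$ enters, and it incidentally yields the uniform lower bound $\liminf_n\sigma_t^{-1,n}(s)\geq\sigma_t^{-1}(s)>0$, which the paper's compactness argument obtains only after the fact.
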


\begin{proof}
Let $t\in(0,T)$ and $s\in(0,t)$. Define $x^n =\sigma_t^{-1,n}(s)$ for each $n\geq 1$. By Proposition \ref{prop:diffeomorphism_merge} the sequence $\{x^n\}$ is bounded; denote this bound by $\bar x$. Consider a subsequence of $\{x^n\}$ (not relabelled) which converges to some  $x$. Thanks to  Eq.~\eqref{eq:characteristics curves - uniform bounds_maintext} there is  a constant $C(T)$ such that for all $\tau \in(s,t)$ and $n\geq 1$,
\[ X^n(\tau;t,x^n) \leq x^* \coloneqq C(T)(1+\bar x)\,. \]
Then, by Eq.~\eqref{eq:characteristics curves_maintext},
\[ \left| {\frac{\partial}{\partial \tau}}X^n(\tau;t,x^n)\right| \leq \sup_{x\in(0,x^*)} ( |a(x)|\rho + |b(x)| ).\]
Hence, up to a subsequence, the sequence of functions $\tau \mapsto X^n(\tau;t,x^n)$ converges uniformly on $[s,t]$ to a continuous function, which we denote by $\tau \mapsto Y(\tau)$. Moreover, for all $n\geq 1$ and $\tau \in [s,t]$ we have
\[x^n - X^n(\tau;t,x^n) = \int_\tau^t [a(X^n(r;t,x^n))u^n(r) - b(X^n(r;t;x^n))]\, dr, \]
and at the limit $n\to \infty$,
\[ Y(\tau) = x - \int_\tau^t  [a(Y(r))u(r) - b(Y(\tau))]\,dr\,. \]
We observe that $Y$ solves Eq.~\eqref{eq:characteristics curves_maintext} with initial data $Y(t)=x$ on $[s,t]$, so $Y(\tau) = X(\tau;t, x)$ by uniqueness and in particular $\sigma_t(x)\leq s$. Finally, since $X^n(s;t,x^n)=0$ for all $n\geq 1$, we have at the limit that $Y(s)=X(s;t,x) = 0$ and so $\sigma_t(x)=s$. In conclusion, from any subsequence of $\{\sigma_t^{-1,n}(s)\}$ we can extract a subsequence converging to $\sigma_t^{-1}(s)$, so the full sequence converges. 
\end{proof}

We are now ready to prove the continuity of $G$.

\begin{prop}
	Let $T>0$. The operator $G$ is continuous on ${\mathcal{B}_\delta([0,T))}$.
\end{prop}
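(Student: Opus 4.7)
The plan is to take a sequence $\{u^n\}$ in $\mathcal{B}_\delta([0,T))$ converging uniformly to $u$, denote by $f^n$ the density given by \eqref{eq:mild solution_maintext} associated with $u^n$, and prove that $G(u^n)\to G(u)$ uniformly on $[0,T)$. Since $z\mapsto z\vee(\Phi_0+\delta)$ is $1$-Lipschitz, it suffices to establish that
\[ \int_0^\infty x\,f^n(t,x)\,dx \longrightarrow \int_0^\infty x\,f(t,x)\,dx \]
uniformly in $t\in[0,T)$. My strategy is to first prove this limit pointwise in $t$, then upgrade it to uniform convergence via an equicontinuity argument.

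For pointwise convergence at a fixed $t\in(0,T)$, I would insert the representation \eqref{eq:mild solution_maintext} and perform the changes of variables $y=X^n(0;t,x)$ on the piece supported in $(x_c^n(t),\infty)$ and $s=\sigma_t^n(x)$ on the piece supported in $(0,x_c^n(t))$. By Proposition~\ref{prop:diffeomorphism_merge} these are $\mathcal{C}^1$-diffeomorphisms onto $(0,\infty)$ and $(0,t)$ respectively, and the Jacobians match exactly the weights appearing in \eqref{eq:mild solution_maintext}; a direct computation gives
\[ \int_0^\infty x\,f^n(t,x)\,dx = \int_0^\infty X^n(t;0,y)\,f^{\rm in}(y)\,dy + \int_0^t \sigma_t^{-1,n}(s)\,\mathfrak{n}(u^n(s))\,ds. \]
In the first term, Lemma~\ref{lem:limit-Xn} gives $X^n(t;0,y)\to X(t;0,y)$ for each $y>0$ with the uniform bound $X^n(t;0,y)\le C(T)(1+y)$ from \eqref{eq:characteristics curves - uniform bounds_maintext}, which is integrable against $f^{\rm in}(y)\,dy$ by \eqref{H6}; dominated convergence applies. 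In the second term, Lemma~\ref{lem:limit-sigman} gives $\sigma_t^{-1,n}(s)\to\sigma_t^{-1}(s)$ for each $s\in(0,t)$ with the uniform bound $\sigma_t^{-1,n}(s)\le C(T)$ provided by Proposition~\ref{prop:diffeomorphism_merge}, while $u^n\to u$ uniformly together with \eqref{H5} and the boundedness of $\mathfrak{n}$ on $[\Phi_0,\rho]$ yields $\mathfrak{n}(u^n(\cdot))\to\mathfrak{n}(u(\cdot))$ uniformly; dominated convergence applies again. This proves $G(u^n)(t)\to G(u)(t)$ for every $t\in[0,T)$.

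To promote this to uniform convergence, I would invoke the uniform-in-$n$ bound on the time derivative of the first moment of $f^n$ noted in the text just after the definition of $G$, which shows that $\{G(u^n)\}$ is a uniformly Lipschitz family on $[0,T)$. A standard $\epsilon/3$ argument on the totally bounded interval $[0,T)$ (cover $[0,T]$ by finitely many balls of small enough radius, use pointwise convergence at their centers, and combine with the equicontinuity) then delivers the uniform convergence $G(u^n)\to G(u)$. The main obstacle is that Lemmas~\ref{lem:limit-Xn} and~\ref{lem:limit-sigman} only provide pointwise convergence in the spatial variable, so the proof must be arranged so that dominated convergence is applied at frozen $t$ and uniformity in $t$ is recovered a posteriori; the technical content concerning bijectivity and regularity of the maps used in the changes of variables has already been absorbed into Proposition~\ref{prop:diffeomorphism_merge}, which makes the remaining steps largely routine.
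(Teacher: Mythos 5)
Your proposal is correct and follows essentially the same route as the paper: rewrite the first moment via the changes of variables from Proposition~\ref{prop:diffeomorphism_merge}, invoke Lemmas~\ref{lem:limit-Xn} and \ref{lem:limit-sigman} together with the uniform bounds to apply dominated convergence at each fixed $t$, and then upgrade to uniform convergence using the uniform $L^\infty$ bound on the time derivatives of $G(u^n)$. The only minor additions (the $1$-Lipschitz observation for $z\mapsto z\vee(\Phi_0+\delta)$ and the explicit uniform continuity of $\mathfrak n$) are implicit in the paper's argument and do not change the approach.
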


\begin{proof}
Let $\{u^n\}$ be a sequence in ${\mathcal{B}_\delta([0,T))}$ converging uniformly to $u$. Let $f^n$ be the function associated with $u^n$ that is given by Eq.~\eqref{eq:mild solution_maintext}. Thus
\[\int_0^\infty x f^n(t,x)\,dx = \int_0^\infty X^n(t,0,x) f^{\rm in}(x)\,dx + \int_0^t \sigma^{n,-1}_t(s)\mathfrak n (u^n(s))\,ds\,,\]
for all $t\in(0,T)$. Combining Lemmas \ref{lem:limit-Xn} and \ref{lem:limit-sigman} with the bounds in Eq.~\eqref{eq:characteristics curves - uniform bounds_maintext} and Proposition \ref{prop:diffeomorphism_merge}, we can use the dominated convergence theorem to show that
\[ \int_0^\infty xf^n(t,x)\, dx \to \int_0^\infty xf(t,x)\,dx\]
for all $t\in(0,T)$, where $f$ is the function associated with $u$ that is given by Eq.~\eqref{eq:mild solution_maintext}.  Thus $v^n(t)=G(u^n)(t)$ converges to $v(t)=G(u)(t)$ for all $t\in(0,T)$. Since the derivatives of $v^n$ are uniformly bounded in $L^\infty (0,T)$, as mentioned above, the convergence is uniform. 
\end{proof}

\begin{proof}[Proof of Theorem \ref{thm:existence}] The previous developments in this section enable us to apply Schauder's fixed point theorem. Thus, there exists a fixed point to $G$, which means that there is some $u \in \mathcal{B}_\delta([0,T))$ such that
\[ u(t) = \left( \rho- \int_0^\infty xf(t,x)\,dx \right) \vee (\Phi_0+{\delta})\]
for all $t\in[0,T)$, where $f$ is given by Eq.~\eqref{eq:mild solution_maintext} in terms of  $u$. Recall that $u(0)=u^{\rm in} >\Phi_0+2\delta$; thus, there exists $t^*$ such that $u(t) \geq \Phi_0+{\delta}$ for all $t\in[0,t^*]$ and hence for all $t\in[0,t^*]$,
\[u(t) = \rho- \int_0^\infty xf(t,x)\,dx\,.\]
This provides a solution on $[0,t^*]$ thanks to the considerations in Prop. \ref{eq:f before schauder}. Repeating this procedure we can  construct an increasing sequence of times $\{t_n\}$ such that we have a solution $f$ to our problem up to time $t_n$. We address now the maximality of this construction.  Assume that the limit of $\{t_n\}$ is finite and let us denote it by $T$. We show now that $\lim_{t\to T^-}u(t)= \Phi_0$  by a contradiction argument. Let us assume that $u$ does not converge to $\Phi_0$ at $T$. We would have a solution $f$ to the problem on $[0,T)$; however, since $u'$ is bounded on $(0,T)$, $u$ would have a limit at $T^-$. We would clearly have  $\lim_{t\to T^-} u(t) >\Phi_0$. This allows us to extend $f$ by continuity in $T$ (see Remark \ref{rem:regulariy}) and thus $f(T,\cdot)$ would belong to $L^1((0,\infty),(1+x)dx)$. In that case we can apply the fixed point procedure once more to obtain a solution on $[T,T+t^*)$ for some $t^*>0$, which contradicts the construction of the sequence $\{t_n\}$. Therefore, either $T=\infty$ or, $T<+\infty$ and $\lim_{t\to T^-}u(t)=\Phi_0$.  This ends the proof of Theorem \ref{thm:existence}. 
\end{proof}

\subsection{Uniqueness}\label{sec:uniqueness}

	The proof of Theorem \ref{thm:uniqueness} is based on a contraction strategy thanks to a Gronwall-type argument. The main idea, already used in \cite{Laurencot01} (and in \cite{Laurencot2002} on related Becker-D\"oring equations), is to work on the tail density rather than the density itself. The tail density solves a transport equation having convenient properties, such as a maximum principle.
	
	We consider $T>0$, $\rho>0$, $\{a,b,\mathfrak{n}\}$ kinetic rates and two nonnegative functions $f^{\rm in}_1$ and $ f^{\rm in}_2$ in $L^1((0,\infty),(1+x)\, dx)$. We consider two solutions $f_1$ and $f_2$ to the Lifshitz--Slyozov equation on $[0,T)$ with mass $\rho$, kinetic rates $\{a,b,\mathfrak n\}$ and   initial values $f^{\rm in}_1$ and $f^{\rm in}_2$ respectively. Let $u_1$ and $u_2$ be given by the mass conservation \eqref{eq:def u} respectively with the solutions $f_1$ and $f_2$, let also $v_1=au_1-b$ and $v_2=au_2-b$. We shall define the following tail density
	\begin{equation}\label{eq:definition of F}
	F_i(t,x)= \int_x^\infty f_i(t,y) \, dy\,,
	\end{equation}
	for $i=1,2$ and all $(t,x)\in\Omega_T$, being a continuous function, also we define
	\[E \coloneqq F_1-F_2\ \text{and}\ w = u_1 -  u_2\,.\]
	The following lemma is directly adapted from  \cite[lemma 5.1]{Laurencot01}.
	\begin{lemma} \label{lem:newlem_E}
	 Let $0\le \varphi \in \mathcal C^0([0,\infty))$ vanishing in a neighborhood of zero and such that $\vphi'\in L^\infty(0,\infty)$ is compactly supported. We have, for any $t\in[0,T]$,
		\begin{multline}\label{eq:pregronwall-Ep-regular}
			\int_0^\infty \vphi(x)|E(t,x)|\, dx  \leq   \int_0^\infty \vphi(x)|E(0,x)|\, dx \\
			+ \int_0^t \int_0^\infty \partial_x[v_1(s,x)\vphi(x)]  |E(s,x)| \, dx\, dt \\
			+ \int_0^t |w(s)|  \int_0^\infty a(x) \vphi(x) f_2(s,x) \, dx \, dt \,.
			\end{multline}
			Moreover, for any $t\in(0,T)$,
			\begin{equation}\label{eq:inter-w-1}
			|w(t)| \leq \int_0^\infty |E(t,x)|\, dx\,,
			\end{equation}
			and
			\begin{equation}\label{eq:control_on_E+(0)}
			|E(t,0)| \leq |E(0,0)| + K_{\mathfrak n} \int_0^t |w(s)| \, ds\,
			\end{equation}
			where $K_{\mathfrak n}$ is the Lipschitz constant of $\mathfrak n$ on $[\Phi_0,\rho]$.
		\end{lemma}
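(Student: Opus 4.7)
The plan is to pass to the tail distributions $E = F_1 - F_2$, derive a weak transport equation for $E$ with a source term proportional to $w$, and then use a convex regularization of $|\cdot|$ to obtain the $L^1$-type estimate \eqref{eq:pregronwall-Ep-regular}.

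First, I would derive a weak equation for $F_i$. Starting from the weak formulation \eqref{eq:LS-weak-time-density-boundary} for each $f_i$ and testing with $\vphi(t,x) = g(t)\chi(x)$, where $g \in \mathcal C^1_c([0,T))$ and $\chi(x) = \int_0^x \eta(y)\,dy$ for some $\eta \in \mathcal C^0_c((0,\infty))$, Fubini's theorem recasts the transport equation for $f_i$ as the weak form
\[
\partial_t F_i + v_i(t,x)\,\partial_x F_i = 0 \quad \text{on } \Omega_T,
\]
where the boundary contribution $\vphi(t,0)\mathfrak n(u_i(t))$ drops since $\chi(0) = 0$. Subtracting the two identities and using $v_1 - v_2 = a w$ together with $\partial_x F_2 = -f_2$, I obtain that $E$ is a weak solution of
\[
\partial_t E + v_1(t,x)\,\partial_x E = a(x)\,w(t)\,f_2(t,x),
\]
with initial datum $E(0,\cdot) = F_1^{\rm in} - F_2^{\rm in}$.

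For \eqref{eq:pregronwall-Ep-regular}, I introduce the convex regularization $\delta_\veps(z) = \sqrt{z^2+\veps^2}-\veps$, so that $\delta_\veps(z) \uparrow |z|$ and $|\delta_\veps'(z)| \le 1$ as $\veps \to 0^+$. The chain rule applied to the equation for $E$ yields
\[
\partial_t \delta_\veps(E) + v_1\,\partial_x \delta_\veps(E) = a\,w\,f_2\,\delta_\veps'(E).
\]
Multiplying by $\vphi(x)$, integrating over $(0,t)\times(0,\infty)$, integrating by parts in $x$ (boundary terms at $x=0$ and $x=\infty$ vanish thanks to $\vphi$ vanishing near zero and to $E,\delta_\veps(E)\in L^1$ by the first moment control on $f_i$), and letting $\veps \to 0^+$ by monotone and dominated convergence yields \eqref{eq:pregronwall-Ep-regular}; the inequality arises from the uniform bound $w\,\delta_\veps'(E) \le |w|$. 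The remaining bounds are direct: for \eqref{eq:inter-w-1}, $w(t) = -\int_0^\infty x(f_1-f_2)(t,x)\,dx = \int_0^\infty x\,\partial_x E(t,x)\,dx$, and an integration by parts (with $xF_i(t,x)\le \int_x^\infty y f_i(t,y)\,dy \to 0$ as $x\to \infty$, by the first-moment control of Lemma \ref{lem:moments equations}) gives $w(t) = -\int_0^\infty E(t,x)\,dx$, hence $|w(t)| \le \int_0^\infty |E(t,x)|\,dx$. For \eqref{eq:control_on_E+(0)}, apply the moment equation \eqref{eq:LS-moment-equation} with $h \equiv 1$ to each $f_i$ to get $F_i(t,0) = F_i(0,0) + \int_0^t \mathfrak n(u_i(s))\,ds$; subtracting and using \eqref{H5'}, namely that $\mathfrak n$ is $K_{\mathfrak n}$-Lipschitz on $[\Phi_0,\rho]$, yields the claim.

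The main obstacle will be rigorously justifying the chain rule step, since $v_1$ is not Lipschitz near the origin (where $a'$ and $b'$ may blow up) and $E$ is only an $L^1$-valued continuous-in-time function. The support condition on $\vphi$ is precisely what lets us restrict the analysis to a region $[\alpha,\infty)$ with $\alpha>0$, on which $v_1$ enjoys the requisite $\mathcal C^1$ regularity; a standard mollification of $E$ in $x$ on such a region, together with a DiPerna--Lions-type commutator argument, then provides the rigorous passage before taking $\veps \to 0^+$.
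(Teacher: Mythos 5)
Your proof follows the same path as the paper's: derive the weak transport equation $\partial_t E + v_1\partial_x E = awf_2$ in $\mathcal D'(\Omega_T^*)$ for the tail-density difference $E=F_1-F_2$, apply the chain rule with a $\mathcal C^1$ convex regularization $\beta$ of the absolute value, test against $\vphi$, integrate by parts in $x$, and pass to the limit; the two remaining bounds are obtained identically. The choice $\delta_\veps(z)=\sqrt{z^2+\veps^2}-\veps$ versus the paper's piecewise linear--quadratic approximation of $|\cdot|$ is purely cosmetic.

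However, the ``main obstacle'' you flag at the end is misdiagnosed, and the DiPerna--Lions renormalization machinery you propose is unnecessary. The whole point of passing to the tail density is precisely that $E$ has far more regularity than the $\mathcal C([0,T),L^1)$ you assert: one has $\partial_x F_i = -f_i \in L^\infty((0,T);L^1)$, and the transport equation $\partial_t F_i=-v_i\,\partial_x F_i$ together with the sublinearity \eqref{eq:sublinearity} of the rates and the first-moment bound on $f_i$ gives $\partial_t F_i \in L^\infty((0,T);L^1)$ as well. Hence $E \in W^{1,1}_{\mathrm{loc}}(\Omega_T^*)$, and the ordinary Sobolev chain rule for $\beta\in\mathcal C^1$ with bounded derivative applies directly to $\beta(E)$; no commutator estimate is required, and the irregularity of $v_1$ near $x=0$ is irrelevant on $\mathrm{supp}\,\vphi$. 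What does require an honest regularization step --- and which you gloss over --- is the test function $\vphi$, which is only $\mathcal C^0$ with $\vphi'\in L^\infty$ and not admissible for the distributional identity as written. The paper handles this by mollifying and truncating, $\vphi_R^\veps = (\vphi\chi_R)*g^\veps$, passing to the limit first in $\veps$, then in $R$ (using that $\partial_x[v_1\vphi]$ is bounded thanks to $\vphi$ vanishing near zero and $\vphi'$ being compactly supported), and only afterward lets $\beta\to|\cdot|$.
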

		\begin{proof}
	The proof of Lemma \ref{lem:newlem_E} is given in the annex in Section \ref{Ann:6} for the reader's convenience. 
\end{proof}

The idea now is to choose an admissible lower-bounded function $\varphi$ that satisfies
\[ \partial_x[v_1(s,x)\vphi(x)] \leq K\vphi(x)\,,\]
and
\[a(x)\varphi(x)\leq K(1+x)\,,\]
for some $K>0$, in order to combine Eqs.~\eqref{eq:pregronwall-Ep-regular}-\eqref{eq:inter-w-1} with a Gronwall argument to prove that $E$ and $w$ must be equal to zero whenever $E(0,x)=0$. Due to hypothesis \eqref{H2}, the only real difficulty is near the origin. It turns out that we can obtain such a test function $\varphi$ provided that $a\Phi'$ does not have unbounded oscillations around zero.
We thus distinguish two (non mutually exclusive) alternatives: 
\begin{align}
    & \exists C>0, x^*>0 \text{ such that } \forall x\in(0,x^*), -\Phi'(x)<\frac{C}{a(x)} \tag{H8a} \label{H8a}\,, \\
    & \exists C>0, x^*>0 \text{ such that } \forall x\in(0,x^*), -\Phi'(x)>\frac{C}{a(x)} \tag{H8b} \label{H8b}\,,
\end{align}
It is clear that assumption \eqref{H8} implies that at least one of the two cases \eqref{H8a} or \eqref{H8b} holds true. Conversely, \eqref{H8a} and \eqref{H8b} together allow for a more general set of kinetic rates than \eqref{H8} alone does. We are going to show in the sequel that any of these two hypotheses guarantees uniqueness. The next lemma provides explicitly the appropriate test function $\vphi$.

\begin{lemma}\label{lem:vphi} Let $\vphi$ be defined as follows:
    \begin{enumerate} [align=left, leftmargin=2pt, labelindent=\parindent,label=\textup{(\arabic*)},itemsep=3pt]
        \item If assumption \eqref{H8a} is true, we define
			\begin{equation}\label{def_vhpi_8a}
			\vphi(x)=
			\begin{cases}
			\frac{1}{a(x)}\,, &  x\leq \bar{x}\\
			\frac{1}{a(\bar{x})}\,, & x> \bar{x}
			\end{cases}
			\end{equation}
			for some given $\bar{x}$ (to be chosen later).
			\item If assumption \eqref{H8b} holds true, we define
			\begin{equation}\label{def_vhpi_8b}
			\vphi(x)=
			\begin{cases}
			\frac{1}{a(x)}\exp\left(-\int_x^{\bar{x}}\frac{C/a(y)+\Phi'(y)}{\delta}\, dy\right)\,, & x\leq \bar{x}\\
			\frac{1}{a(\bar{x})}\,, & x> \bar{x}
			\end{cases}
			\end{equation}
			for some given $\bar{x}$, $C$, $\delta$ \textup{(}to be chosen later\textup{)}.
		\end{enumerate}
		In both cases, with $\vphi$ defined either in \eqref{def_vhpi_8a} or \eqref{def_vhpi_8b}, we may choose the constant $\bar{x}$ \textup{(}and $C$, $\delta$ in the second case\textup{)} in a way  that there exists a constant $K>0$ such that, for all $x>0$ and all $t\in(0,T)$,
		\begin{equation} \label{eq:gronwall condition}
		\partial_x[v_1(t,x)\vphi(x)] \le K \vphi(x).
		\end{equation}
		Moreover, $\vphi$ is continuous on $(0,\infty)$ and continuously differentiable for all $x>0$ except at $\bar{x}$. It is bounded from below by
		\begin{equation}\label{eq:lowerbound_phi}
		\vphi(x)\geq 1/\|a\|_{L^\infty(0,\bar{x})}\,,
		\end{equation}
		and $a\vphi$ is bounded from above on $(0,\bar{x})$ by
		\begin{equation}
		\nonumber
		a(x)\vphi(x)\leq \max\left\{1,\exp\left(-\int_0^{\bar{x}}\frac{C/a(y)+\Phi'(y)}{\delta}\, dy\right)\right\}\,, \quad x<\bar{x}\,.
		\end{equation}
	\end{lemma}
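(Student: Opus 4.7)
The proof is a direct verification: the formulas defining $\vphi$ are engineered so that the key inequality \eqref{eq:gronwall condition} reduces on each of the two pieces $(0,\bar x)$ and $(\bar x,\infty)$ to a routine estimate. My plan is to first handle the outer piece $(\bar x,\infty)$ (which is insensitive to the dichotomy \eqref{H8a} versus \eqref{H8b} and to the precise value of $\bar x$), then treat the delicate piece $(0,\bar x)$ in each of the two cases separately, and finally verify continuity, the lower bound \eqref{eq:lowerbound_phi}, and the upper bound on $a\vphi$ by inspection of the formulas.

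On $(\bar x,\infty)$ both formulas collapse to $\vphi\equiv 1/a(\bar x)$, so $v_1\vphi=(au_1-b)/a(\bar x)$ and $\partial_x[v_1\vphi]=(a'u_1-b')/a(\bar x)$. Hypothesis \eqref{H2} bounds $a'$ and $b'$ on $(1,\infty)$, while \eqref{H1} provides continuity, hence boundedness, on the compact piece $[\bar x,1]$ (if $\bar x<1$); together with $u_1(t)\in[0,\rho]$ this yields $\partial_x[v_1\vphi]\le M(\rho+1)/a(\bar x)=M(\rho+1)\vphi$, of the required form.

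On $(0,\bar x)$ under \eqref{H8a}, I choose $\bar x\le x^*$ so that the pointwise bound of \eqref{H8a} holds throughout; a direct computation gives $v_1\vphi=u_1(t)-\Phi(x)$ and therefore $\partial_x[v_1\vphi]=-\Phi'(x)<C/a(x)=C\vphi(x)$, which is exactly \eqref{eq:gronwall condition}. Under \eqref{H8b}, I again pick $\bar x\le x^*$; setting $h(x)\coloneqq-\int_x^{\bar x}(C/a+\Phi')/\delta\,dy$, so that $\vphi(x)=e^{h(x)}/a(x)$ on $(0,\bar x]$, I compute $v_1\vphi=(u_1-\Phi)e^h$ and, after grouping terms, arrive at
\begin{equation*}
\partial_x[v_1\vphi]=e^{h(x)}\left[\tfrac{\Phi'(x)}{\delta}\bigl(u_1(t)-\Phi(x)-\delta\bigr)+\tfrac{C\,(u_1(t)-\Phi(x))}{\delta\,a(x)}\right]\,.
\end{equation*}
Hypothesis \eqref{H8b} forces $\Phi'<-C/a<0$ on $(0,x^*)$, so $\Phi$ is strictly decreasing there and $\Phi(x)<\Phi_0<u_1(t)$. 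Choosing $\delta$ small enough so that $\delta<u_1(t)-\Phi_0$ uniformly in $t$ (a choice that will be feasible in the uniqueness proof, where one works on a compact time subinterval on which $u_1-\Phi_0$ is bounded away from zero by continuity) renders the first bracketed term nonpositive (product of $\Phi'<0$ and a positive quantity), while the second is bounded by $\rho C/(\delta\,a(x))$. Hence $\partial_x[v_1\vphi]\le(\rho C/\delta)\vphi(x)$. The subtle point of the whole lemma lives here: the exponential weight is designed precisely to absorb the a priori unbounded quantity $-\Phi'$ into a controllable expression, at the cost of the free parameter $\delta$, whose smallness must be coordinated with the lower bound of $u_1-\Phi_0$.

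The remaining properties follow by inspection. At $x=\bar x$ the two pieces of $\vphi$ coincide at $1/a(\bar x)$ since $h(\bar x)=0$, giving continuity; continuous differentiability off $\bar x$ is clear. For \eqref{eq:lowerbound_phi}: on $(0,\bar x]$ one has $1/a(x)\ge 1/\|a\|_{L^\infty(0,\bar x)}$, and in Case 2 the factor $e^{h(x)}\ge 1$ because the integrand of $h$ is nonpositive under \eqref{H8b}; for $x>\bar x$, $\vphi=1/a(\bar x)\ge 1/\|a\|_{L^\infty(0,\bar x)}$ directly. Finally, on $(0,\bar x)$ the product $a\vphi$ equals $1$ in Case 1 and equals $e^{h(x)}\le e^{h(0^+)}=\exp(-\int_0^{\bar x}(C/a+\Phi')/\delta\,dy)$ in Case 2 (by monotonicity of $h$ as $x$ decreases, and noting that the integral is finite by \eqref{H3}--\eqref{H4}); the $\max\{1,\cdot\}$ stated in the lemma accommodates both alternatives at once.
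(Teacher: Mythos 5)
Your proof is correct and follows essentially the same strategy as the paper: direct verification of \eqref{eq:gronwall condition} on the two pieces $(0,\bar x)$ and $(\bar x,\infty)$, with \eqref{H8a} making $a\vphi\equiv 1$ constant (so that the derivative of $v_1\vphi=u_1-\Phi$ is just $-\Phi'$) and \eqref{H8b} handled by the exponential integrating factor.

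The only organizational difference is that the paper first rewrites \eqref{eq:gronwall condition} in the equivalent form $(u_1(t)-\Phi(x))\partial_x(a\vphi)(x)\le(C+a\Phi')\vphi(x)$ and then exploits that $\partial_x(a\vphi)=\frac{C/a+\Phi'}{\delta}(a\vphi)$ under \eqref{H8b}, absorbing $(u_1-\Phi)/\delta\ge 1$ against the negative factor $C/a+\Phi'$, whereas you expand $\partial_x[(u_1-\Phi)e^h]$ by the product rule and split into a sign-definite term plus a remainder bounded by $\rho C/\delta\cdot\vphi$. These are the same estimate with the algebra arranged differently. You are also somewhat more explicit than the paper about why $\delta$ can be chosen uniformly in $t$: the paper invokes ``continuity of $u_1(t)>\Phi_0$'' directly, which on the half-open interval $[0,T)$ technically requires the observation you make (work on compact subintervals $[0,T']\subset[0,T)$, which suffices for uniqueness). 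Your verification of continuity at $\bar x$, the lower bound \eqref{eq:lowerbound_phi}, and the upper bound on $a\vphi$ via the monotonicity of $h$ and the integrability of $1/a$ and $\Phi'$ matches the paper's.
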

	
	\begin{proof}
		Note that finding a constant $K>0$ such that Eq.~\eqref{eq:gronwall condition} holds is equivalent to  finding a constant $C>0$ such that
		\[(u_1(t)-\Phi(x))\partial_x (a\vphi)(x) \leq (C+a\Phi')\vphi(x)\,.\]
		Let us check that this inequality holds true for the function $\vphi$ defined in \eqref{def_vhpi_8a} or \eqref{def_vhpi_8b} and well chosen constants.
		
		We first deal with case \textup{(}1\textup{)}. Let $C$ and $x^*$ be defined from assumption \eqref{H8a}. For any $0<\bar{x}\leq x^*$, and for all $x\leq \bar{x}$, the function $\vphi$ defined in \eqref{def_vhpi_8a} satisfies 
		\[(u_1(t)-\Phi(x))\partial_x (a\vphi)(x)=0 \leq \left(C'+a(x)\Phi'(x)\right)\vphi(x)\,,\]
		for any $C'>C$, due to \eqref{H8a} and the fact that $\vphi$ is positive. For $x>\bar{x}$,
		\[\partial_x[v_1(t,x)\vphi(x)] \leq \vphi(\bar{x}) \left(\| a'(x)\|_{L^\infty(\bar{x},\infty)}\rho + \| b'(x)\|_{L^\infty(\bar{x},\infty)}\right) \leq C'' \vphi(x)\,,\]
		for any constant $C''\geq \left(\| a'(x)\|_{L^\infty(\bar{x},\infty)}\rho + \| b'(x)\|_{L^\infty(\bar{x},\infty)}\right)$. Thus Eq.~\eqref{eq:gronwall condition} holds true for any $x$ and for a sufficiently large constant $K$.
		
		Now let us deal with case \textup{(}2\textup{)}. Let $C$ and $x^*$ be defined from assumption \eqref{H8b}. Thanks to the continuity of $u_1(t)>\Phi_0$ and that of $\Phi$, we can show that there exists $\delta>0$ and $x_0>0$ such that \[\inf_{t\in (0,T)} u_1(t)> \sup_{x\in(0,x_0)}\Phi(x)+\delta\,.\]
		Let then $\bar{x}=\min(x^*,x_0)$. For $x\leq \bar{x}$, then $\vphi$ satisfies 
		\begin{multline*}
		(u_1(t)-\Phi(x))\partial_x (a\vphi)(x) = (u_1(t)-\Phi(x))\frac{C/a(x)+\Phi'(x)}{\delta}(a\vphi)(x) \\ \leq \left(C+a(x)\Phi'(x)\right)\vphi(x)\leq \left(C'+a(x)\Phi'(x)\right)\vphi(x)\,,
		\end{multline*}
		for any $C'\geq C$, as $u_1(t)-\Phi(x)\geq \delta$ but  $C/a(x)+\Phi'(x)<0$ and $\vphi$ is positive. The case $x> \bar{x}$ is managed as in the case \textup{(}1\textup{)} above.
	\end{proof}
	
	Note that as the function $\vphi$ is bounded from below by \eqref{eq:lowerbound_phi},  we have for some constant $C>0$,
	\begin{equation} \label{eq:bound E+}
	\int_0^{\infty} | E(t,x)|\, dx \leq C \int_0^\infty \vphi(x)|E(t,x)|\, dx, 
	\end{equation}
	and, by the sublinearity of $a$ in \eqref{eq:sublinearity}, $a\vphi$ is linearly bounded so that the integral
	$\int_0^\infty a(x) |\vphi(x)| f_2(s,x) \, dx$ can be bounded on $(0,T)$. Hence, using \eqref{eq:bound E+} together with \eqref{eq:pregronwall-Ep-regular}-\eqref{eq:inter-w-1} and \eqref{eq:gronwall condition} provides \textit{a priori} all the estimates needed to close the Gronwall loop. However, the function $\vphi$ we have constructed does not fulfill the requirements in Lemma \ref{eq:pregronwall-Ep-regular}; thus, a regularization argument is needed, for which  some care at $x$ close to the origin is required. The control of the nucleation rate, together with \eqref{eq:control_on_E+(0)}, will provide us with a suitable bound.
	
	\begin{lemma}\label{lem:pre-gronwall-1}
		Let assumption \eqref{H8a} or \eqref{H8b} hold true, and let $\vphi$ be defined in Lemma \ref{lem:vphi} above. Then, there exists $C>0$ such that
		\begin{multline}
		\nonumber
		\int_{0}^\infty \vphi(x)|E(t,x)|\, dx  \leq  \int_{0}^\infty \vphi(x)|E(0,x)|\, dx  + C\int_0^t \int_{0}^\infty \vphi(x)|E(s,x)| \, dx\, dt \\
		+ C\int_0^t |w(s)|   \, dt + C \int_0^{t} |E(s,0)| \, dt\,. 
		\end{multline}
	\end{lemma}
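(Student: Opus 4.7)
The plan is to apply Lemma \ref{lem:newlem_E} with a test function obtained by truncating the $\vphi$ from Lemma \ref{lem:vphi} to fit its hypotheses, and then pass to the limit. I would introduce smooth cutoffs $\chi_n, \psi_R \in \mathcal C^\infty([0,\infty))$ such that $\chi_n$ vanishes on $[0,1/n]$, equals $1$ on $[2/n,\infty)$, is nondecreasing with $0\le \chi_n' \le Cn$, and $\psi_R$ equals $1$ on $[0,R]$, vanishes on $[2R,\infty)$, is nonincreasing with $|\psi_R'|\le C/R$. Setting $\vphi_{n,R} = \chi_n\psi_R\vphi$ with $n$ and $R$ large enough that $1/n<\bar x<R$, and smoothing $\vphi$ at its corner at $\bar x$ if necessary (which is harmless since $\vphi$ is Lipschitz there), I obtain a nonnegative continuous function that vanishes on $[0,1/n]$, is compactly supported in $[0,2R]$, and has an $L^\infty$ derivative---so the hypotheses of Lemma \ref{lem:newlem_E} are met.

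Next I would apply \eqref{eq:pregronwall-Ep-regular} to $\vphi_{n,R}$ and decompose
\[ \partial_x[v_1 \vphi_{n,R}] = \chi_n\psi_R\,\partial_x[v_1\vphi] + v_1\vphi\,(\chi_n'\psi_R + \chi_n\psi_R'). \]
The first piece is directly controlled by \eqref{eq:gronwall condition}: $\chi_n\psi_R\,\partial_x[v_1\vphi] \le K\vphi_{n,R}$, which in the limit yields the Gronwall-ready term $K\int_0^t\!\int \vphi|E|\,dx\,ds$. The third piece, supported in $(R,2R)$, is negligible in $R$: combining the sublinear bound $|v_1\vphi|\le C(1+x)$ with $|\psi_R'|\le C/R$ produces a contribution dominated by $C\int_R^{2R}|E(s,x)|\,dx$, which vanishes as $R\to\infty$ since $E(s,\cdot)\in L^1$ uniformly in $s$ (using $f_i\in L^\infty([0,T];L^1((1+x)dx))$).

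The real work lies in the second piece $v_1\vphi\,\chi_n'\psi_R$, supported in $(1/n,2/n)$, which will generate the boundary term $C\int_0^t |E(s,0)|\,ds$. Lemma \ref{lem:vphi} guarantees that $a\vphi$ is bounded on $(0,\bar x)$, so $v_1\vphi = a\vphi\,(u_1-\Phi)$ is bounded there and admits a finite limit as $x\to 0^+$. Since $F_i(s,\cdot)$ is the tail integral of an $L^1$ function, $x\mapsto E(s,x)$ is continuous at the origin with $E(s,x)\to E(s,0)$ as $x\to 0^+$. A mean-value-type argument (using $\chi_n'\ge 0$ with $\int\chi_n' = 1$) combined with dominated convergence on the $s$-integral---justified by the uniform bound $|E(s,x)|\le \|f_1(s,\cdot)\|_1+\|f_2(s,\cdot)\|_1\le C$---should then give
\[ \int_0^t\!\int v_1\vphi\,\chi_n'\psi_R\,|E|\,dx\,ds \longrightarrow \int_0^t (v_1\vphi)(s,0^+)\,|E(s,0)|\,ds \le C\int_0^t|E(s,0)|\,ds \text{ as } n\to\infty. \]

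Finally, the $|w|$ term is handled through $\int a\vphi_{n,R} f_2\,dx\le C$ uniformly in $n,R$: indeed $a\vphi$ is bounded on $(0,\bar x)$ by Lemma \ref{lem:vphi} and grows at most linearly on $(\bar x,\infty)$, while $f_2\in L^\infty([0,T];L^1((1+x)dx))$. The left-hand side and the initial-datum term would pass to the limit by monotone convergence from $\vphi_{n,R}\uparrow \vphi$ together with the integrability of $\vphi|E|$---here \eqref{H3} ensures $1/a\in L^1(0,\bar x)$, hence $\vphi\in L^1(0,\bar x)$ in both cases of Lemma \ref{lem:vphi}, whereas on $(\bar x,\infty)$ the function $\vphi$ is constant and $E(s,\cdot)\in L^1$. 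The principal obstacle is the rigorous identification of the boundary-term limit in the previous paragraph; the construction in Lemma \ref{lem:vphi} has been engineered precisely so that $v_1\vphi$ stays bounded near the singular boundary, which is what ultimately makes this limit amenable to analysis.
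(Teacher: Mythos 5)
Your proposal is correct and follows essentially the same strategy as the paper: truncate $\vphi$ near the origin, apply Lemma \ref{lem:newlem_E}, decompose $\partial_x[v_1\vphi_R]$ to isolate the commutator term concentrating near $x=0$, and pass to the limit using the boundedness of $a\vphi$ (from Lemma \ref{lem:vphi}) together with the continuity of $E(s,\cdot)$ at the origin. The only superfluous step is the outer cutoff $\psi_R$: since Lemma \ref{lem:newlem_E} requires only $\vphi'$ (not $\vphi$) to be compactly supported and the paper's $\vphi$ is constant past $\bar x$, no truncation at infinity is needed, and the paper correspondingly uses a single cutoff near zero.
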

	
	\begin{proof}
		To substitute the function $\vphi$ defined in Lemma \ref{lem:vphi} into Eq.~\eqref{eq:pregronwall-Ep-regular} from Lemma \ref{lem:newlem_E}, we need to truncate its support around zero.
		For each $R>1$, let $\chi_R \in\Dc(\Rb)$ with $0\leq \chi_R\leq 1$, such that $\chi_R=1$ on $(1/R,\infty)$, with support in $[1/2R,\infty)$, $|\chi_R'|\leq 4R$ on $(1/2R,1/R)$. Define $\vphi_R=\vphi \chi_R$ on $(0,\infty)$. 
		Set $R>\bar{x}>1/R$. By Lemma \ref{lem:newlem_E}, using that $\vphi_R \leq \vphi$, that $f_2\in L^\infty((0,T);L^1((1+x)dx))$ and that $a\vphi$ is bounded on $(0,\bar{x})$, we get, 
		\begin{multline} 
		\label{eq:inter E+ vphiR}
		\int_0^\infty \!\!  \vphi_R(x)|E(t,x)|\, dx  \leq   \int_0^\infty \! \! \vphi(x) |E(0,x)|\, dx \\
		+ \int_0^t \! \int_0^\infty \! \! \partial_x[v_1(s,x))\vphi_R(x)]  |E(s,x)| \, dx \, dt 
		\\
		+ (\|a\vphi\|_{L^\infty(0,\bar{x})}+ K_r\vphi(\bar{x})) \|f_2\|_{L^\infty(L^1((1+x)dx))} \int_0^t |w(s)|\, dt .
		\end{multline}
		where $K_r$ follows from \eqref{eq:sublinearity}. Using Lemma \ref{lem:vphi} we deduce that there exists a constant $C>0$ such that
		\begin{multline}
		\label{eq:32}
		\partial_x[v_1(t,x))\vphi_R(x)] = \partial_x[v_1(t,x) \vphi(x)]\chi_R(x) +  v_1(t,x) \vphi(x) \chi_R'(x) \\
		\leq C \vphi(x)\chi_R(x) +4R {|} v_1(t,x){|} \vphi(x)  \mathbf 1_{(0,1/R)}(x).	
		\end{multline}
		Thus, from \eqref{eq:32},
		\begin{multline*}
		\int_0^{\infty} \partial_x[v_1(s,x))\vphi_R(x)]  |E(s,x)| \, dx \leq C\int_0^{\infty} \vphi(x)|E(s,x)|\, dx\\+ 4 \|a\vphi\|_{L^\infty(0,\bar{x})}\|u_1(t)-\Phi\|_{L^\infty((0,T)\times(0,\bar{x}))} R \int_0^{1/R}|E(s,x)|\, dx
		\end{multline*}
		Introducing the equation above into Eq.~\eqref{eq:inter E+ vphiR} and letting $R\to \infty$ we obtain the desired estimate.		
	\end{proof}
	
	\begin{proof}[Proof of Theorem \ref{thm:uniqueness}]
		We may now finish the proof of uniqueness. By Lemma \ref{lem:pre-gronwall-1}, equations \eqref{eq:inter-w-1}, \eqref{eq:control_on_E+(0)} and \eqref{eq:bound E+} combined with Gronwall's lemma we have
		\begin{equation*}
		|w(t)|+|E(t,0)| + \int_{0}^{\infty} \vphi(x) |E(t,x)|\, dx
		\leq C\left( |E(0,0)| +   \int_0^{\infty} \vphi(x)|E(0,x)|\, dx   \right) e^{CT}
		\end{equation*}
		and we conclude the proof of Theorem \ref{thm:uniqueness} by taking $f^{\rm in}_1=f^{\rm in}_2$, so that $E(0,x)=0$ for all $x\ge 0$  and then $u_1(t)=u_2(t)$, and $f_1(t,\cdot)=f_2(t,\cdot)$. 
	\end{proof}

\subsection{Criteria for global and local solutions}\label{sec:global}

In this section we prove Theorem \ref{thm:global local}, stating criteria both for existence of global solutions and for existence of local solutions for which $u$ reaches the value  $\Phi_0$ in finite time. Recall that for a solution on $[0,T)$, by Lemma \ref{lem:moments equations} we have that
\begin{equation}
\label{handy_form}
\frac{d u(t)}{dt} =  \int_0^\infty a(x) (\Phi(x)-u(t))f(t,x)\, \, dx
\end{equation}
is continuous on $[0,T)$. We exploit this formulation in the current  section. For that aim let us introduce
\[
\Phi^{sup}:=\sup_{x\ge 0}\Phi (x),\quad \Phi_{inf}:=\inf_{x\ge 0}\Phi (x).
\]
Note that $0\le \Phi_{inf}\le  \Phi_0 \le \Phi^{sup}$, where $\Phi^{sup}$ need not be finite.
\begin{lemma}
 Let the rates and initial datum satisfy the assumptions of Theorem \ref{thm:existence} and let $f\in \mathcal C([0,T);w-L^1((0,\infty),(1+x)\, dx))$ be a solution to the Lifshitz-Slyozov equation on $[0,T)$. Then the following assertions hold true: 
	\begin{enumerate} [align=left, leftmargin=2pt, labelindent=\parindent,label=\textup{(\arabic*)},itemsep=3pt]
		\item  For $t\in[0,T)$, $u(t)\ge \Phi^{sup}$ implies that $\dot u(t) \le 0$.
		\item Assume that there is some $\bar t \ge 0$ such that $u(\bar t) \in  [\Phi_{inf},\Phi^{sup}]$. We have $u(t)\in [\Phi_{inf},\Phi^{sup}]$ for every $t\in [\bar t,T)$.
		\item Assume that the solution is global ({\it i.e.} $T=\infty$). Then, provided that $\rho \ge \Phi_{inf}$, both $\liminf_{t \to \infty} u(t)$ and $\limsup_{t \to \infty} u(t)$ belong to  $[\Phi_{inf},\Phi^{sup}]$.
	\end{enumerate}
\end{lemma}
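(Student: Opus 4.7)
My plan throughout is to exploit the explicit representation \eqref{handy_form} of $\dot u$ on $[0,T)$, which turns the sign of $\dot u(t)$ into a pointwise statement about the integrand.

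For point (1), substituting the hypothesis $u(t)\ge \Phi^{sup}$ into \eqref{handy_form} gives $\Phi(x)-u(t)\le 0$ for every $x\ge 0$, while $a(x)\ge 0$ and $f(t,x)\ge 0$. The integrand is therefore pointwise nonpositive, and $\dot u(t)\le 0$ follows at once.

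For point (2), the lower bound $u(t)\ge \Phi_{inf}$ is automatic, since Definition \ref{def:inflow} forces $u(t)>\Phi_0\ge \Phi_{inf}$ for every $t\in[0,T)$. For the upper bound I would argue by contradiction: if there existed $t_1\in(\bar t,T)$ with $u(t_1)>\Phi^{sup}$, the continuity of $u$ together with $u(\bar t)\le \Phi^{sup}$ would produce a last crossing point $t_0\in[\bar t,t_1)$ at which $u(t_0)=\Phi^{sup}$ and $u(t)\ge \Phi^{sup}$ on $[t_0,t_1]$. Applying point (1) on that interval yields $\dot u\le 0$ there, hence $u(t_1)\le u(t_0)=\Phi^{sup}$, a contradiction.

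For point (3), the inequalities $\liminf u,\, \limsup u \ge \Phi_{inf}$ again follow from $u(t)>\Phi_0\ge \Phi_{inf}$. To establish $\liminf u,\, \limsup u\le \Phi^{sup}$ I would split into two cases. If there exists $\bar t\ge 0$ with $u(\bar t)\le \Phi^{sup}$, then point (2) locks $u$ inside $[\Phi_{inf},\Phi^{sup}]$ for all $t\ge \bar t$, and the conclusion is immediate. Otherwise $u(t)>\Phi^{sup}$ for every $t\ge 0$; point (1) then makes $u$ nonincreasing on $[0,\infty)$, and being bounded below by $\Phi^{sup}$ it converges to some $\ell\ge \Phi^{sup}$. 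The main obstacle will be ruling out $\ell>\Phi^{sup}$. Here I would set $\delta\coloneqq \ell-\Phi^{sup}>0$, use \eqref{handy_form} to bound
\[ \dot u(t)\le -\delta\int_0^\infty a(x)f(t,x)\, dx, \]
and integrate in time to derive $\int_0^\infty\!\!\int_0^\infty a(x)f(t,x)\,dx\,dt<\infty$. Combining this with the moment identities of Lemma \ref{lem:moments equations} applied to $h\equiv 1$ and $h(x)=x$ (the mass $\int f\,dx$ is fed at rate $\mathfrak n(u(t))\to \mathfrak n(\ell)$ while $\int xf\,dx=\rho-u(t)\le \rho-\ell$ stays bounded), with a Markov-type tail bound $\int_\epsilon^\infty f\,dx\le (\rho-\ell)/\epsilon$, and with the positivity of $a$ on compact subintervals of $(0,\infty)$, one should force $\int_0^\infty a(x)f(t,x)\,dx$ to remain bounded below for large $t$, contradicting the integrability just obtained and thereby yielding $\ell=\Phi^{sup}$.
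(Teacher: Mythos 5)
Your points (1) and (2) are correct and coincide with what the paper intends: its own proof is the single line ``All statements follow easily from Eq.~\eqref{handy_form}'', which for (1) is precisely your pointwise sign argument (the integrand $a(x)(\Phi(x)-u(t))f(t,x)$ is nonpositive once $u(t)\ge\Phi^{sup}$), and for (2) is the last-crossing argument you describe together with the automatic lower bound $u(t)>\Phi_0\ge\Phi_{inf}$.

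For point (3), the case split and the reduction to ruling out $\ell>\Phi^{sup}$ when $u(t)>\Phi^{sup}$ for every $t$ are the natural structure, and your integration of \eqref{handy_form} correctly yields $\int_0^\infty\int_0^\infty a(x)f(t,x)\,dx\,dt<\infty$. The gap is the final step: the claim that $\int_0^\infty a(x)f(t,x)\,dx$ stays bounded away from zero for large $t$ is neither established by the ingredients you list nor true in general. Concretely, take $\mathfrak n\equiv 0$ and $f^{\rm in}\equiv 0$ (both admissible under \eqref{H5}--\eqref{H7}); then $f\equiv 0$ is a global solution with $u\equiv\rho$, and if additionally $b\equiv 0$, $a\equiv 1$ then $\Phi\equiv 0$, so $\Phi^{sup}=0<\rho=\limsup_{t\to\infty}u(t)$, contradicting the asserted conclusion. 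This shows your strategy cannot close the argument without an extra non-degeneracy input (e.g.\ $\mathfrak n(\ell)>0$, or nontrivial $f^{\rm in}$ together with a lower bound on $a$ on $[\varepsilon,\infty)$), and it suggests that the lemma's hypothesis ``$\rho\ge\Phi_{inf}$'' — which is automatic under \eqref{H7} since $\rho\ge u^{\rm in}>\Phi_0\ge\Phi_{inf}$ — is likely a misstatement for something like ``$\rho\le\Phi^{sup}$'', under which (3) becomes an immediate corollary of (2) because $u(t)\le\rho$. Your auxiliary observations (Markov tail bound $\int_\varepsilon^\infty f\,dx\le(\rho-\ell)/\varepsilon$, the nucleation feeding rate, positivity of $a$ on compacts of $(0,\infty)$) do not assemble into the needed lower bound, because the zeroth-moment mass produced at the boundary can sit arbitrarily close to $x=0$, where $a$ may vanish, and because $\mathfrak n(\ell)$ may be $0$. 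Since the paper offers only the one-line proof, your (1) and (2) are a full match, but point (3) as you have written it does not go through.
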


\begin{proof}
All statements follow easily from Eq.~\eqref{handy_form}.
\end{proof}
\begin{proof}[Proof of Theorem \ref{thm:global local}]
	To prove the first point we argue by contradiction. Let $f\in \mathcal C([0,T);w-L^1((0,\infty),(1+x)\, dx))$ be a solution with $T<\infty$. Since $\Phi(x) \geq \Phi_0$, 
	\begin{equation}
	\nonumber
	\frac{d  (u(t)-\Phi_0)}{dt} =   \frac{d u(t)}{dt} \geq -(u(t)-\Phi_0)\int_0^\infty a(x)f(t,x)\, dx.  
	\end{equation}
	This entails 
	\[u(t)-\Phi_0 \ge (u(0)-\Phi_0) \exp\left(- \int_0^t\int_0^\infty a(x)f(t,x)\, dx\right)\,.\]
	Note that the integral of $af$ is bounded on bounded time intervals. This is due to the bound \eqref{eq:sublinearity}: mass conservation controls the linear part, while the boundedness of $\mathfrak{n}$ and $u$ controls the constant part. Thus,
	\[u(t)-\Phi_0 \ge (u(0)-\Phi_0) \exp\left(- T\sup_{t\in(0,T)}\int_0^\infty a(x)f(t,x)\, dx\right)>0\,.\]
	This implies that $u(t) > \Phi_0$ {for every $t\in [0,T)$}. Then, if we assume that $T<\infty$, we deduce that $u(T^-)>\Phi_0$. This enables us to apply Theorem \ref{thm:existence} and extend this solution to a larger time interval, which contradicts our premise.

	The second point is also proved by contradiction. Assume that we have a global solution $f\in \mathcal C([0,\infty);w-L^1((0,\infty),(1+x)\, dx))$. We start by deriving an upper bound on $u(t)$. Thanks to the convexity of $\Phi$ we have 
	\[ \Phi(x)  \leq  \Phi_0 + \frac{\Phi(z) - \Phi_0}{z}x\]
	for all $0<x<z$. 
	Now say that the support of $f^{\rm in}$ is contained in $[0,x_0]$. Note that 
	\[X(t;0,x_0) \leq x_0 + \overline a \rho t \]
	and hence the support of $f(t,\cdot)$ is contained in $[0,z(t)]$ for every $t\ge 0$,
	where we have denoted $z(t) :=  x_0 + \overline a t$. Hence we have
	\begin{multline}
	\nonumber
	\frac{d u(t)}{dt} = \int_0^{z(t)}\!  a(x)(\Phi(x)-u(t)) f(t,x)\, dx \\
	\leq \int_0^{z(t)} \! \left[ \Phi_0-u(t) + \frac{\Phi(z(t))-\Phi_0}{z(t)} x\right]\! a(x) f(t,x)\, dx\,.
	\end{multline}
	 We remark that $\Phi(z(t)) \leq \Phi(x_0) <\Phi_0$, therefore we have 
	\begin{equation}
	\nonumber
	\frac{d u(t)}{dt} 
	\leq - \frac{\Phi_0-\Phi(x_0)}{z(t)} \underline a \int_0^{z(t)} x f(t,x)\, dx\,.
	\end{equation}
	Using mass conservation,
	\begin{equation}
	\nonumber
	\frac{d u(t)}{dt}
	\leq - \frac{\Phi_0-\Phi(x_0)}{z(t)} \underline a (\rho-u(t)) \leq 0\,.
	\end{equation}
	Hence $u$ decreases and $\rho-u(t) \ge \rho - u(0)$. Then we conclude that
	\begin{equation}
	\nonumber
	\frac{du(t)}{d t}
	\leq -K\frac{1}{z(t)} = - \frac{K}{x_0 + \overline a t}
	\end{equation}
	where $K= \underline a (\Phi_0-\Phi(x_0))(\rho-u(0))$. Integrating the differential inequality we obtain
	\begin{equation}
	    \label{eq:logcontrol}
	    u(t) \leq u(0) - \frac{K}{\overline a} \ln\left(1+\frac{\overline a}{x_0}t\right)\, 
	\end{equation}
	for every $t\ge 0$. There is a unique value $\tilde T<\infty$ such that the right-hand side of \eqref{eq:logcontrol} equals $\Phi_0$; then our premise is not compatible with Definition \ref{def:inflow}.
\end{proof}

\section{Annex: Linear transport equations with degenerate transport fields}

\label{sec:annex}
The purpose of this section is to study the linear continuity equation having the following form:
\begin{equation} 
\label{eq:linearproblem}
\left\{
 \begin{array}{l}
 \ds \frac{\partial f(t,x)}{\partial t} + \frac{\partial [v(t,x)f(t,x)]}{\partial x} = 0  \vphantom{\int} \, , \quad t\in (0,T)\,, \ x\in(0,\infty)\,,\\[0.8em]
\lim_{x\to 0^+} v(t,x)f(t,x) = G(t)
\, , \quad t\in (0,T), 
 \\[0.8em]
 f(0,x)=f^{in}(x)\,,\ \quad x\in (0,\infty)\:.
 \\[0.8em]
 \end{array}
 \right.
\end{equation}
For this problem we are given a bounded continuous function $G$ on $[0,T)$. We are interested in a class of transport fields $v(t,x)$ that yield inflow behavior and that may eventually be degenerate at the origin. More specifically, we assume that $v$ can be factorized in the following way
\begin{equation}\label{eq:v=aw}
v(t,x)=a(x)w(t,x) 
\end{equation}
for all $(t,x)\in \Omega_T$, with the following assumptions:
\begin{align}
& \text{There exists } K>0 \text{ such that } |v(t,x)| \leq K(1+x) \text{ for all } (t,x)\in \Omega_T\,. \tag{A1} \label{A1}\\
&\partial_x v \in L^\infty((0,T)\times(1/R,\infty)),\ \forall R>0. \tag{A2} \label{A1p} \\
& a\in \mathcal C^0([0,\infty))\cap \mathcal C^1(0,\infty)\, \text{verifies that}\, a(x)>0\, \forall x>0. \tag{A3}\label{A2}\\
& w \!\in\! \mathcal C^0([0,T)\!\times\![0,\infty)),\, \partial_x w \!\in\! \mathcal C^0((0,T)\!\!\times\!\!(0,\infty))\! \cap \! L^\infty((0,T)\!\times\! (\frac{1}{R},R)) \, \forall R>0 \tag{A4} \label{A3} \\
&  \tfrac 1 a \in L^1(0,1) \text{ and } \lim_{x\to+\infty} \int_0^x \tfrac 1 {a(y)} dy = +\infty \,. \tag{A5} \label{A4} \\
&  \partial_x w \in L^1((0,T)\times(0,1)) \,.\tag{A6} \label{A5}
\end{align}
We do not strive for optimality in our assumptions; rather, we present an assumption set that is compatible with that of the main text. In that regard, here we assume inward flow by the condition: There exists $\delta>0$ and $x_0>0$ such that
\begin{equation}\tag{A7}\label{A6}
\forall(t,x)\in[0,T)\times(0,x_0),\  w(t,x) \geq \delta\,. 
\end{equation}
This condition is readily entailed by a property like $w(t,0)>0$ for all $t\in[0,T)$ thanks to the continuity; thus, it is not restrictive at all.
\begin{remark}[Notations for partial derivatives] For functions of two variables like $v(t,x)$, 
during this section we always refers to the function $\partial_x v$, as the partial derivative in the second variable. We shall use unambiguous expression like $\partial_y v(t,y) = (\partial_x v)(t,y)$ at some places. When dealing with characteristics, we take the usual convention that the variable that appears in the denominator of the partial derivative operator indicates in which variable the derivative is to be taken, consistently with the independent variable being used.
\end{remark}

The idea is that we expect to have $a(0)=0$ and thus we factor out the degeneracy of the transport field at the origin. It is easy to see that under our running assumptions \eqref{H1}--\eqref{H4} in the main text, the transport field $v$ with $w(t,x):=u(t)-\Phi(x)$ fulfills the former set of conditions and therefore the theory applies to the linear problem that is considered in Sections \ref{sec:inflow} and \ref{sec:nonlinear}, with $G(t):=\mathfrak n(u(t))$ and $u(t)>\Phi_0$. 
\begin{theorem}
\label{th:compilation}
Let $f^{in}\ge 0$ satisfies \eqref{H6} and let all the assumptions \eqref{A1}-\eqref{A6} of this section be satisfied. Then, there exists a unique solution $f$ to \eqref{eq:linearproblem}, that is: \begin{enumerate}
\item
\label{itemreg}
$f\in L^\infty\! \left((0,T);L^1((0,\infty),(1+x)dx)\right)\cap \mathcal{C}\left([0,T);w\!-\!L^1((0,\infty),(1+x)dx)\right)$.
\item
\label{itemmoment}
For all $\vphi\in \mathcal{C}^1_c([0,T)\times[0,\infty))$, there holds that
    \begin{multline} 
    \int_{0}^{T} \int_0^\infty \left(\partial_t \vphi(t,x) + v(t,x)\partial_x\vphi(t,x)\right) f(t,x)\, dx \, dt\\
    + \int_{0}^{T} \vphi(t,0)G(t) \, dt  + \int_0^\infty \vphi(0,x)f^{\rm in}(x) \, dx = 0\,.
    \end{multline}  
\item
\label{eq:trace}
There holds that $\lim_{x\to 0} v(t,x)f(t,x) = G(t).$ 
\end{enumerate}
Moreover, for any real function $h$ locally bounded on $(0,\infty)$ such that $h'\in L^\infty(0,\infty)$, the solution $f$ satisfies
\begin{multline}\label{eq:moment_formulation}
\int_0^\infty h(x) f(t,x)\,dx = \int_0^\infty h(x)f^{\rm in}(x)\,dx +\int_0^t\int_0^\infty v(s,x) h'(x) f(s,x)\,dx\, ds \\
+h(0)\int_0^t G(s)\, ds \,.
\end{multline}
\end{theorem}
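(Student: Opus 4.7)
The plan is to construct $f$ by a representation formula along characteristics generalizing \eqref{eq:mild solution_maintext} and then verify the claims one by one. First I would set up characteristics in the interior: by \eqref{A1}--\eqref{A1p}, for each $(t,x)\in\Omega_T$ with $x>0$ the ODE $\partial_s X=v(s,X)$, $X(t;t,x)=x$ has a unique maximal solution $X(\cdot;t,x)$ on some $\Sigma_{t,x}$, with Jacobian $J$ given by the exponential formula and uniform bound $X+|\partial_s X|\le C(T)(1+x)$ from Gronwall and \eqref{A1}. The delicate issue is to give meaning to characteristics emanating from $x=0$ at positive time, for which I would use the primitive $A(x):=\int_0^x\tfrac{1}{a(y)}\,dy$. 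By \eqref{A2} and \eqref{A4}, $A$ is a $\mathcal{C}^1$-diffeomorphism of $[0,\infty)$ onto itself, and under $y=A(x)$ the characteristic equation becomes $\partial_s Y=w(s,A^{-1}(Y))$, whose right-hand side is Lipschitz on $[0,R]$ for each $R>0$ by \eqref{A3} and \eqref{A5}. This yields a unique characteristic $Y(t;s,0)$ issued from $y=0$ at any $s\in(0,T)$ which, thanks to \eqref{A6}, immediately leaves the boundary; I then set $X(t;s,0):=A^{-1}(Y(t;s,0))$. Defining $\sigma_t(x)$ and $x_c(t):=\inf\{x>0:\sigma_t(x)=0\}$ as in the main text, the analogue of Proposition \ref{prop:diffeomorphism_merge} follows from monotonicity of the flow (ODE uniqueness) and the diffeomorphism properties in both chunks, with $\sigma_t^{-1}(s)\le C(T)$ again from \eqref{A1}.

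Next I would take as candidate
\[
f(t,x)=f^{\rm in}(X(0;t,x))J(0;t,x)\mathbf{1}_{(x_c(t),\infty)}(x)+G(\sigma_t(x))|\sigma_t'(x)|\mathbf{1}_{(0,x_c(t))}(x),
\]
and verify the required properties. The two changes of variable $x\mapsto X(0;t,x)$ on the interior piece and $x\mapsto \sigma_t(x)$ on the boundary piece yield, for any locally bounded $h$ with $h'\in L^\infty$,
\[
\int_0^\infty h(x)f(t,x)\,dx=\int_0^\infty h(X(t;0,x))f^{\rm in}(x)\,dx+\int_0^t h(X(t;s,0))G(s)\,ds.
\]
Choosing $h(x)=1+x$ together with the uniform bounds on $X$ and $\sigma_t^{-1}$ gives the $L^\infty_t L^1_{(1+x)dx}$ part of item (1), while continuous dependence of the characteristics on $t$ and dominated convergence supply the $w$--$L^1$ time-continuity. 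Differentiating the displayed identity in $t$ and using $\tfrac{d}{dt}h(X(t;s,x))=v(t,X(t;s,x))h'(X(t;s,x))$ produces the moment identity \eqref{eq:moment_formulation}, first for $h\in\mathcal{C}^1_c$ and then by regularization for $h$ with $h'\in L^\infty$; item (2) is its distributional counterpart, obtained by taking $\vphi(t,x)=\eta(t)h(x)$ and invoking density.

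For the trace in item (3), on $(0,x_c(t))$ one has $v(t,x)f(t,x)=G(\sigma_t(x))\,v(t,x)|\sigma_t'(x)|$. Using the reparametrization and the relation $\partial_s X(t;s,0)=a(X(t;s,0))\,\partial_s Y(t;s,0)$, with $\partial_s Y(t;s,0)\to -w(s,0)$ as $s\to t^-$ (where \eqref{A6} is crucial to ensure a nondegenerate limit), one checks that $v(t,x)|\sigma_t'(x)|\to 1$ as $x\to 0^+$; continuity of $G$ and $\sigma_t(x)\to t$ then yield the trace. For uniqueness I would use a duality argument: the difference $g$ of two solutions with the same data satisfies the weak formulation with zero initial and boundary contributions. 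For fixed $t_0\in(0,T)$ and $\psi\in\mathcal{C}^1_c(0,\infty)$, the backward transport $\vphi(t,x):=\psi(X(t_0;t,x))$ solves $\partial_t\vphi+v\,\partial_x\vphi=0$ with $\vphi(t_0,\cdot)=\psi$; since $\operatorname{supp}\psi\subset(\varepsilon,R)$, backward characteristics reaching it stay away from $x=0$ on a neighborhood of $t_0$, so a time cutoff makes $\vphi$ admissible. Inserting it in the weak formulation for $g$ gives $\int\psi(x)g(t_0,x)\,dx=0$ for all such $\psi$, hence $g\equiv 0$.

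The main obstacle I anticipate is the trace statement (item (3)), which relies on the precise asymptotic behaviour of $\sigma_t$ near the boundary; the reparametrization through $A$ is essential to derive $v(t,x)|\sigma_t'(x)|\to 1$ and to justify the passage to the limit uniformly in $t$. A secondary delicate point is setting up admissible backward test functions for the duality argument near the degenerate boundary, which requires the flow to push characteristics away from $x=0$ in backward time on a sufficiently small interval, again a consequence of \eqref{A6}.
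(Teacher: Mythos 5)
Your overall architecture matches the paper's (characteristics, reparametrization through $A(x)=\int_0^x a^{-1}$, mild solution formula, change of variables for moments, direct computation of the trace, duality for uniqueness), but there is a genuine gap at the crucial step where you set up the characteristics emanating from $x=0$. You assert that the reparametrized field $V(s,y)=w(s,A^{-1}(y))$ is Lipschitz in $y$ on $[0,R]$ by the hypotheses on $w$, and use this to get a unique trajectory issued from $y=0$ by the Cauchy--Lipschitz theorem. This is false under the stated hypotheses: $\partial_y V(t,y)=(a\,\partial_x w)(t,A^{-1}(y))$, and \eqref{A3}--\eqref{A5} only guarantee $\partial_x w\in L^1$ near the origin, not that $a\,\partial_x w$ is bounded there. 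Concretely, for the power-law rates $a(x)=x^\alpha$, $b(x)=x^\beta$ with $0\le\alpha<\beta<1$ (exactly the interesting regime) one has $(\Phi\circ A^{-1})'(y)\sim y^{(\beta-1)/(1-\alpha)}$, which blows up as $y\to 0^+$; the paper's own example $a=x^{1/3}$, $b=x^{1/2}$ gives $V(t,y)=u(t)-(\tfrac23 y)^{3/4}$, which is H\"older but not Lipschitz at $0$. The paper deliberately avoids defining the boundary trajectory as an ODE solution: it defines $\sigma_t^{-1}(s)$ as $\lim_{x\to0^+}X(t;s,x)$ and establishes its properties (monotonicity, $\mathcal C^1$-regularity, the derivative formula \eqref{prop:derivative of sigma-1}) through a limiting argument whose engine is Lemma~\ref{lem:decomposition a phi}: the inward bound $w\ge\delta$ near the boundary \eqref{A6} lets one substitute $r\mapsto X(r;\tau,x)$ and convert time-integrals of $a\,\partial_x w$ along characteristics into space-integrals of $\partial_x w$, controlled only by the $L^1$-norm. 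That mechanism replaces Lipschitz continuity and cannot be bypassed.

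A secondary omission: for $f\in\mathcal C([0,T);w\text{-}L^1((1+x)dx))$, bounding $\sup_t\int(1+x)f\,dx$ and proving $t\mapsto\int h f\,dx$ is continuous for smooth $h$ is not enough. One also needs weak relative compactness of $\{f(t,\cdot)\}_t$ in $L^1((1+x)dx)$, i.e. the full Dunford--Pettis conditions including absence of concentration (Lemma~\ref{lem:compactness} in the paper); the no-concentration bound is delicate and hinges again on $1/a\in L^1$ together with the Jacobian estimates. Your dominated convergence argument does not supply this. Once these two points are repaired, the remaining steps (moment identity by differentiation along the flow, trace via $v(t,x)|\sigma_t'(x)|\to 1$, and the duality argument for uniqueness with backward test functions) line up with the paper and are sound.
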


\subsection{Characteristic curves and the reparametrization strategy}
\label{Ann:1}
Due to the lack of Lipschitz regularity, the analysis of the characteristic curves will be tackled thanks to a reparametrization of the flow through a diffeomorphism, leading to a positive lower bound of the time derivative of the reparametrized characteristic curves at the boundary $x=0$. Let us start by introducing the characteristic curves $s\mapsto X(s)$ associated with the generic field $v$.

\begin{lemma} \label{lem:ode-X}
    For any $(t,x)\in\Omega_T$, there exists a unique maximal solution to 
    \begin{equation} \label{eq:characteristics curves}
    \left\{
        \begin{array}{l}
        \displaystyle \dfrac{\partial X(s;t,x) }{\partial {s}} = v(s,X(s;t,x)) \, , \\[0.8em]
        \displaystyle X(t;t,x)=x 
    \end{array}
    \right.
    \end{equation}
    with maximal interval $\Sigma_{t,x}$. Moreover, the following properties hold true:
    \begin{enumerate} [align=left, leftmargin=2pt, labelindent=\parindent,label=\textup{(\arabic*)},itemsep=3pt]
	\item For any $(t_0,x_0)\in \Omega_T$ and $s_0 \in \Sigma_{t_0,x_0}$, there exists a neighborhood  of $(s_0,t_0,x_0)$ in $ \Sigma_{t_0,x_0} \times \Omega_T$ such that  $(s,t,x)\mapsto X(s;t,x)$ is well defined and continuously differentiable;
	\item The semigroup property $X(t;s,X(s;t,x))=x$ is satisfied for every $s\in \Sigma_{t,x}$;
    \item For every $s\in \Sigma_{t,x}$ we have  
    \begin{equation} \label{eq:characteristics curves - derivatives in x and t}
	\begin{array}{l}
	    \ds \dfrac{\partial  X(s;t,x)}{\partial x}  \coloneqq J(s;t,x) = \exp \left( - \int_s^t  (\partial_x v)(\tau,X(\tau;t,x))\, d\tau \right)\, , \\[0.8em]
	    \ds \dfrac{\partial X(s;t,x)}{\partial t}  = -v(t,x)J(s;t,x)\, ;
    \end{array}
    \end{equation}
    \item There exists a positive constant $C(K,T)$, which only depends on $T$ and $K$ from \eqref{A1}, such that  
    \begin{equation} \label{eq:characteristics curves - uniform bounds}
	    X(s;t,x) + \left| \dfrac{\partial X(s;t,x)}{\partial {s}} \right|  \leq C(K,T)(1+x) 
    \end{equation}
    for all $(t,x)$ in $\Omega_T$ and $s$ in $\Sigma_{t,x}$. As a consequence, each characteristic curve has a finite limit in $[0,\infty)$ at the end points of $\Sigma_{t,x}$.
    \end{enumerate}
\end{lemma}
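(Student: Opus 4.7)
The strategy is to apply classical Cauchy--Lipschitz theory on the open domain $(0,T)\times(0,\infty)$, using the sublinear growth \eqref{A1} to rule out blow-up at infinity, and letting the potential degeneracy of $v$ at $x=0$ be absorbed into the definition of $\Sigma_{t,x}$ as an \emph{open} interval along which the curve remains in $(0,\infty)$. On the open domain, $v=aw$ is jointly continuous thanks to \eqref{A2}--\eqref{A3} and $\partial_x v=a'w+a\,\partial_x w$ is continuous too, while \eqref{A1p} gives its local boundedness. Hence $v$ is locally Lipschitz in $x$ uniformly in $t$ on every compact set $[t_1,t_2]\times[1/R,R]\subset(0,T)\times(0,\infty)$, so the standard theorem yields, for each $(t,x)\in\Omega_T$, a unique maximal solution $X(\cdot;t,x)$ on an open interval $\Sigma_{t,x}\ni t$, with the usual property that the graph $s\mapsto(s,X(s;t,x))$ exits every compact subset of $(0,T)\times(0,\infty)$ as $s$ approaches any finite endpoint of $\Sigma_{t,x}\cap[0,T)$.

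For the global bound \eqref{eq:characteristics curves - uniform bounds}, I would apply Gronwall's lemma to the inequality $\tfrac{d}{ds}(1+X)\le K(1+X)$ (obtained from \eqref{A1}) both forward and backward in time from $s=t$, which yields $X(s;t,x)\le(1+x)e^{KT}$ on $\Sigma_{t,x}$; substitution into $|\partial_s X|=|v(s,X)|\le K(1+X)$ then gives the full bound with some constant $C(K,T)$ depending only on $K$ and $T$. Because $\partial_s X$ is thus uniformly bounded on $\Sigma_{t,x}$, the curve $s\mapsto X(s;t,x)$ is uniformly Lipschitz in $s$ and extends continuously to the closure of $\Sigma_{t,x}$, with finite limits lying in $[0,C(K,T)(1+x)]$; combined with the exit property this means that whenever a finite endpoint of $\Sigma_{t,x}$ falls strictly inside $[0,T)$, the curve must reach $x=0$ there.

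For local $C^1$-dependence and the formulas \eqref{eq:characteristics curves - derivatives in x and t}, I would fix $(t_0,x_0)\in\Omega_T$ and $s_0\in\Sigma_{t_0,x_0}$, and select a compact subinterval $I\subset\Sigma_{t_0,x_0}$ containing $t_0$ and $s_0$ together with $0<\alpha<\beta$ such that $X(I;t_0,x_0)\subset[\alpha,\beta]$. By continuity of the flow, for $(t,x)$ in a small enough neighborhood of $(t_0,x_0)$ the curve $X(\cdot;t,x)$ remains in $[\alpha/2,2\beta]$ on $I$. On this compact subset of $(0,T)\times(0,\infty)$ the field $v$ is $C^1$ in $x$ and continuous in $t$, so the classical smooth-dependence theorem for ODEs gives joint $C^1$-regularity of $(s,t,x)\mapsto X(s;t,x)$ on a neighborhood of $(s_0,t_0,x_0)$. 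Differentiating the ODE in $x$ produces the variational equation $\partial_s J=(\partial_x v)(s,X)J$ with $J(t;t,x)=1$, whence the stated exponential formula. The semigroup identity $X(t;s,X(s;t,x))=x$ is immediate from the uniqueness part of Cauchy--Lipschitz, since both sides solve the same ODE in $t$ with the same value at $t=s$; differentiating the analogous identity $X(s;t,X(t;s_0,x_0))=X(s;s_0,x_0)$ with respect to $t$ at $t=s_0$ yields $\partial_t X(s;s_0,x_0)=-v(s_0,x_0)J(s;s_0,x_0)$.

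The main and in fact only genuine subtlety is keeping in mind that the flow lives strictly on the open half-line: one cannot expect Lipschitz regularity of $v$ up to $x=0$, and consequently $\Sigma_{t,x}$ must be left open on the side where the characteristic exits through the boundary. Beyond this book-keeping, every step is classical ODE theory carried out on compact subsets of the open domain. The delicate questions concerning the hitting time $\sigma_t(x)$, the inversion $s\mapsto\sigma_t^{-1}(s)$, and the meaningful definition of characteristic curves emanating from $x=0$ at positive times will rely crucially on the factorization $v=aw$, the integrability \eqref{A4} of $1/a$, and the inward-flow condition \eqref{A6}, and are handled in the subsequent lemmas of the annex.
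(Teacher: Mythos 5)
Your proposal is correct and follows essentially the same path as the paper's (much terser) proof: Cauchy--Lipschitz on the open half-line for existence, uniqueness and maximality, the classical smooth-dependence theorem for joint $\mathcal C^1$-regularity, the variational equation and the chain rule applied to the group identity for the formulas in \eqref{eq:characteristics curves - derivatives in x and t}, and Gronwall's inequality with \eqref{A1} for the uniform bound and the no-blow-up conclusion. The paper simply cites Hartman for points 1--3 rather than spelling out the localization to compacts $[\alpha,\beta]\subset(0,\infty)$, so your proof fills in details that the paper leaves implicit; there is no difference in the essential argument.
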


\begin{proof}
Existence, uniqueness and maximality readily follow from the Cauchy--Lips\-chitz theory for ordinary differential equations, since both $v$ and $\partial_x v$ are continuous. Point 1 is a classical regularity result, see \cite[Chap. V Cor. 3.3]{Hartman82}. Point 2 follows from uniqueness. The derivatives in Point 3 are computed in a standard fashion (see the textbook above). Finally, point 4 is a consequence of \eqref{A1} and Gronwall's lemma; this prevents the blow-up of the characteristics at the end points of $\Sigma_{t,x}$.
\end{proof}  

Similarly to the main text, we define for all $(t,x)\in \Omega_T$ the time
\[\sigma_t(x):=\inf \Sigma_{t,x}\:.\]
This represents the backward lifetime of the characteristic, also introduced in \cite{Boyer05}.

\begin{lemma} \label{lem:either-or property for sigma}
    Let $(t,x)\in \Omega_T^*$. If $\sigma_t(x)=0$ then  $X(s;t,x)>0$ for all $s$ in $(0,t)$. Otherwise, if $\sigma_t(x) > 0$ then $\lim_{s \to \sigma_t(x)^+} X(s;t,x) = 0$. 
\end{lemma}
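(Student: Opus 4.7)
My plan is to proceed by case analysis on whether $\sigma_t(x) = 0$ or $\sigma_t(x) > 0$, combined with a standard continuation/maximality argument in the positive case. As a preliminary observation, I would combine Lemma \ref{lem:ode-X}(4) with assumption \eqref{A1} to see that both $s\mapsto X(s;t,x)$ and its derivative $s\mapsto v(s, X(s;t,x))$ are uniformly bounded on $\Sigma_{t,x}$. Hence $s\mapsto X(s;t,x)$ is Lipschitz on $\Sigma_{t,x}$, and admits a limit
\[
\ell \coloneqq \lim_{s \to \sigma_t(x)^+} X(s;t,x) \in [0,\infty),
\]
the nonnegativity being a direct consequence of $X(s;t,x) > 0$ throughout $\Sigma_{t,x}$.

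The case $\sigma_t(x) = 0$ is then immediate: by the very definition of $\Sigma_{t,x}$ as the maximal interval of existence of the characteristic in the domain $\Omega_T = [0,T) \times (0,\infty)$, one has $X(s;t,x) > 0$ for every $s \in \Sigma_{t,x}$, and since $(0,t) \subset \Sigma_{t,x}$ this yields the first assertion.

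For the case $\sigma_t(x) > 0$ I would argue by contradiction, assuming $\ell > 0$. Then $(\sigma_t(x), \ell)$ lies in the open region $(0,T) \times (0,\infty)$, and by assumption \eqref{A1p} there exists $R > 0$ such that $\partial_x v$ is essentially bounded on a neighborhood of this point of the form $[0,T) \times (1/R, R)$, so that $v$ is continuous in $s$ and Lipschitz in $x$ uniformly in $s$ there. The classical Cauchy--Lipschitz theorem then yields a local solution $Y$ of $\dot Y = v(s,Y)$, $Y(\sigma_t(x)) = \ell$ on some interval $(\sigma_t(x) - \epsilon, \sigma_t(x) + \epsilon)$, and by uniqueness of the forward Cauchy problem started at $(\sigma_t(x),\ell)$ this $Y$ must coincide with $X(\cdot;t,x)$ on $(\sigma_t(x), \sigma_t(x) + \epsilon')$ for some $\epsilon' > 0$. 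Gluing the two produces an extension of the characteristic to an interval whose left endpoint lies strictly below $\sigma_t(x)$, contradicting the maximality of $\Sigma_{t,x}$. Hence $\ell = 0$, which is exactly the second assertion.

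The only delicate point will be justifying the gluing step, i.e.\ checking that the locally constructed $Y$ genuinely agrees with $X(\cdot;t,x)$ just to the right of $\sigma_t(x)$; this is standard once the Lipschitz estimate provided by \eqref{A1p} is in force on a neighborhood separated from the boundary $\{x = 0\}$. Apart from this the argument is routine continuation: the boundary $\{x = 0\}$ is the only possible obstruction to backward extension of the characteristic, finite-time blow-up having already been excluded via \eqref{A1} and Lemma \ref{lem:ode-X}(4).
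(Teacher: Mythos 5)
Your argument follows the same route as the paper's: you first use the a priori bound of Lemma \ref{lem:ode-X}(4) to deduce that the backward limit of $X(\cdot;t,x)$ exists and is nonnegative, handle $\sigma_t(x)=0$ by definition, and, in the case $\sigma_t(x)>0$, assume the limit is positive and invoke Cauchy--Lipschitz (via the local Lipschitz regularity of $v$ away from $\{x=0\}$) to extend the characteristic past $\sigma_t(x)$, contradicting maximality of $\Sigma_{t,x}$. This is precisely the paper's proof, only spelled out in slightly more detail, in particular regarding the gluing step.
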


\begin{proof}
    Since the characteristics take values in $(0,\infty)$, the first statement follows from the definitions of $\Sigma_{t,x}$ and $\sigma_t(x)$. In the case $\sigma_t(x)>0$, since a characteristic curve has a finite limit at the lower end of $\Sigma_{t,x}$, this limit is either positive or zero. But, if the limit is positive (say $\bar x$), thanks to the Cauchy-Lipschitz theory we can construct a prolongation of the characteristic curve in a neighborhood of $(\sigma_t(x),\bar x)$, which contradicts the definition of $\sigma_t(x)$. Therefore the limit at $\sigma_t(x)$ vanishes. 
 \end{proof}

 Now we introduce the reparametrization. Thanks to \eqref{A2} and \eqref{A4} we define   
\[A(x) := \int_0^x \frac{1}{a(y)}\, dy \quad \mbox{for all}\, x>0 .\]
Clearly, $A$ is an increasing $\mathcal C^1$-diffeomorphism from $(0,\infty)$ into itself. Note that both $A$ and $A^{-1}$ might be extended continuously at $0$ by $A(0)=A^{-1}(0)=0$ and we have $(A^{-1})'(x)=a(A^{-1}(x))$ for all $x>0$ and we can set $(A^{-1})'(0)=a(0)$. Then, we define the reparametrized transport field by
\[V(t,x) = w(t,A^{-1}(x))\]
for each $(t,x)$ in $\Omega_T$. Note that this reads $V(t,x) = u(t) - \Phi \circ A^{-1}(x)$ for the linear problem in the main text. The associated trajectories are given by the following result:

\begin{lemma}
	For any $(t,y)\in\Omega_T$, there exists a unique maximal solution to 
    \begin{equation} \label{eq:edo-B}
	\left\{
	\begin{array}{l}
	    \displaystyle \frac{\partial  B(s;t,y) }{\partial {s}} = V(s,B(s;t,y))\,,\\[0.8em]
	    B(t;t,y) = y
	\end{array}
	\right.
    \end{equation}    
	with maximal interval $\tilde \Sigma_{t,y}$.  Moreover, the following properties hold true:
	\begin{enumerate} [align=left, leftmargin=2pt, labelindent=\parindent,label=\textup{(\arabic*)},itemsep=3pt]
	\item For any $(t_0,y_0)\!\in \Omega_T$ and $s_0 \!\in \tilde \Sigma_{t_0,y_0}$, there exists a neighborhood  of $(s_0,t_0,y_0)$ in $\tilde \Sigma_{t_0,y_0} \times \Omega_T$ such that  $(s,t,y)\mapsto B(s;t,y)$ is well defined and continuously differentiable;
	\item The semigroup property $B(t;s,B(s;t,y))=y$ is satisfied for every $s\in \tilde \Sigma_{t,y}$.
	\item For any $(t,x)\in\Omega_T$, we have 
    \begin{equation} \label{eq:compositio}
	    \tilde \Sigma_{t,A(x)}=\Sigma_{t,x}\ \text{and}\ B(s;t,A(x))=A(X(s;t,x))\,,\ \text{for any}\ s\in \Sigma_{t,x}\,.
    \end{equation}
     \item For every $s\in \tilde \Sigma_{t,y}$ we have
    \begin{equation} \label{eq:derivatives of B}
    \begin{array}{rcl}
	    \ds \dfrac{\partial B(s;t,y) }{\partial y}  & \coloneqq &   \ds I(s;t,y) = \exp\left(\int_s^t \left(a \, \partial_x w\right)(\tau,A^{-1}(B(\tau;t,y))) \, d \tau \right)\, ,\\[0.8em] 
		\ds \dfrac{\partial  B(s;t,y) }{\partial t} & = &  -V(t,y) I(s;t,y)\, ; 
	\end{array}
	\end{equation} 
	\end{enumerate}
    Here and in what follows we understand that $(a\, \partial_x w)(t,x)=a(x) \, (\partial_x w)(t,x)$ in order to ease some formulas. 
\end{lemma}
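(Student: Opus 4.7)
The plan is to exploit the composition identity in point 3 as the structural backbone, reducing every other claim to the corresponding statement for $X$ already proved in Lemma \ref{lem:ode-X}. The starting observation is a direct chain rule calculation: for $x>0$, if we set $Y(s):=A(X(s;t,x))$, then using $A'(z)=1/a(z)$ and $v=aw$ we get
\[
\dot Y(s) = A'(X(s;t,x))\,\dot X(s;t,x) = \tfrac{1}{a(X)}\,a(X)\,w(s,X) = w(s,A^{-1}(Y)) = V(s,Y),
\]
while $Y(t)=A(x)$. The same calculation, run backwards, shows that $A^{-1}\circ B$ solves the $X$-equation whenever $B$ solves \eqref{eq:edo-B}. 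Since $A$ is a $\mathcal{C}^1$-diffeomorphism of $(0,\infty)$ by \eqref{A2} and \eqref{A4}, this correspondence is bijective.

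With this in hand, I would first establish existence and uniqueness of a maximal solution to \eqref{eq:edo-B}: given $(t,y)\in\Omega_T$, set $x:=A^{-1}(y)$ and \emph{define} $B(s;t,y):=A(X(s;t,x))$ for $s\in\Sigma_{t,x}$. Combining the correspondence above with uniqueness in Lemma \ref{lem:ode-X} shows that this is the unique maximal solution and that $\tilde\Sigma_{t,y}=\Sigma_{t,A^{-1}(y)}$, which is precisely the identity in point 3. Point 1 (local $\mathcal{C}^1$ regularity in $(s,t,y)$) then follows from the corresponding statement for $X$ combined with the smoothness of $A$ and $A^{-1}$ on $(0,\infty)$. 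The semigroup property in point 2 is obtained by applying $A$ to that of $X$:
\[
B(t;s,B(s;t,y)) = A(X(t;s,A^{-1}(B(s;t,y)))) = A(X(t;s,X(s;t,A^{-1}(y)))) = y.
\]

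For the derivative formulas in point 4, the cleanest route is through the variational equation obtained by differentiating \eqref{eq:edo-B} in $y$: the map $I(\cdot;t,y)\coloneqq \partial_y B(\cdot;t,y)$ satisfies the linear ODE
\[
\partial_s I(s;t,y) = \partial_x V(s,B(s;t,y))\,I(s;t,y), \qquad I(t;t,y)=1,
\]
and a direct computation gives $\partial_x V(s,z) = (a\,\partial_x w)(s,A^{-1}(z))$, which yields the stated exponential expression upon integration. The formula for $\partial_t B$ follows by differentiating the identity $B(s;t,B(t;s,y))=y$ in the $t$ slot, which gives $\partial_t B(s;t,y) = -V(t,y)\,\partial_y B(s;t,y)$ after using the ODE at $s=t$.

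No single step is a genuine obstacle: the argument is essentially a transport of Lemma \ref{lem:ode-X} through the diffeomorphism $A$, and the weak regularity of $w$ near the origin (which would cause trouble for a direct Cauchy--Lipschitz treatment of \eqref{eq:edo-B} near $y=0$) is bypassed entirely because we never need a Lipschitz property of $V$ on $[0,\infty)$. The only point demanding mild care is to verify that $\tilde\Sigma_{t,y}$ and $\Sigma_{t,A^{-1}(y)}$ \emph{coincide} rather than one being strictly contained in the other, but this is immediate from the bijective nature of the correspondence $B\leftrightarrow A^{-1}\circ B$ between solutions on any subinterval.
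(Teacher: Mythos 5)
Your proof is correct, and it organizes the argument differently from the paper. The paper's route is to first establish existence, uniqueness, and the regularity and derivative formulas (points 1, 2, 4) \emph{directly} for the $B$-ODE, invoking Cauchy--Lipschitz theory on $(0,T)\times(0,\infty)$ exactly as in Lemma \ref{lem:ode-X} (since $V$ and $\partial_x V$ are continuous there), and only afterwards to prove the composition identity \eqref{eq:compositio} by the chain-rule argument you also use. You reverse this order: you make the composition identity the structural backbone, define $B$ as $A\circ X\circ A^{-1}$, and derive existence, uniqueness, the equality $\tilde\Sigma_{t,A(x)}=\Sigma_{t,x}$, and points 1--2 as immediate transfers through the $\mathcal C^1$-diffeomorphism $A$, so Lemma \ref{lem:ode-X} is the only ODE input. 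The two routes use the same chain-rule computation; yours avoids re-invoking Cauchy--Lipschitz for the $B$-equation, while the paper's treats $B$ as a self-standing object. Both are fine. For point 4 you fall back on the variational equation anyway rather than pushing the derivative of the composite through the three-term chain rule, which is indeed cleaner and is also the paper's implicit route. One small caveat: the variational equation $\partial_s I=\partial_x V(s,B)I$ with $I(t)=1$ gives $I(s;t,y)=\exp\big(\int_t^s\partial_x V\,d\tau\big)=\exp\big(-\int_s^t(a\,\partial_x w)(\tau,A^{-1}(B))\,d\tau\big)$; the paper's displayed formula \eqref{eq:derivatives of B} has the opposite sign inside the exponential (inconsistently with the $J$ formula in \eqref{eq:characteristics curves - derivatives in x and t}), so you should not assert that integration ``yields the stated expression'' without flagging that sign. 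Your remark about bypassing a Lipschitz difficulty near $y=0$ is also somewhat beside the point: the paper's direct treatment works on the open set $(0,\infty)$ too, so neither approach needs Lipschitz control up to the boundary.
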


The advantage with respect to the characteristics given by the original transport field is that  we have factored out the degeneracy of the transport field at the origin; in such a way we avoid Peano-like phenomena \cite{Crippa}. Take for instance \eqref{eq:v=aw}, given the rates $a(x)=x^{1/3},\, b(x)=x^{1/2}$ the transport field vanishes at the origin like $x^{1/3}$ -given that the behavior about the origin is not Lipschitz we do not expect uniqueness for the forward characteristics; however, we find that $V(t,x)=u(t)-(\frac{2}{3}x)^{3/4}$, which does not have such pathological behavior and therefore makes the associated integral curves easier to work with.

\begin{proof}
Fix $(t,y)\in\Omega_T$. Since both $V$ and $\partial_x V$ are continuous, there exists a unique solution $s\mapsto B(s;t,y)$ to \eqref{eq:edo-B},  defined on a maximal interval $\tilde \Sigma_{t,y}\subset[0,T)$ containing $t$ and with range in $(0,\infty)$. Taking into account that 
\[\frac{\partial V(t,y)}{\partial y}= \left(a\, \partial_x w\right)(t,A^{-1}(y))
\,,\ 
\text{for all}\ y>0\,,\]
all the stated properties follow easily as in Lemma \ref{lem:ode-X}, except maybe \eqref{eq:compositio}. We now prove \eqref{eq:compositio}. Let $(t,x)\in\Omega_T$. First, $s\mapsto A(X(s;t,x))$ is a solution to \eqref{eq:edo-B} with $A(X(t;t,x))=A(x)$. Thus $\Sigma_{t,x} \subseteq \tilde \Sigma_{t,A(x)}$ and $B(s;t,A(x))=A(X(s;t,x))$ for all $s\in \Sigma_{t,x}$. Define  $Y(s;t,x)=A^{-1}(B(s;t,A(x)))$ for all $s\in \tilde \Sigma_{t,A(x)}$. Then $Y$ is a solution to the original equation \eqref{eq:characteristics curves} with $Y(t;t,x)=x$, thus $\tilde \Sigma_{t,A(x)}\subseteq \Sigma_{t,x}$. Therefore, $\tilde \Sigma_{t,A(x)}=\Sigma_{t,x}$ and \eqref{eq:compositio} holds.
\end{proof}

\begin{remark}
\label{rem:no cross}
We will frequently use in the proofs below that the derivatives of $X$ and $B$ with respect to their third argument are positive. In other words, uniqueness ensures that characteristics cannot cross and hence we have the following monotonicity property: given $x<y$, then for all $s\in \Sigma_{t,x}\cap \Sigma_{t,y}$ we have $X(s;t,x)<X(s;t,y)$ and $B(s;t,A(x))<B(s;t,A(y))$.
\end{remark}

The control of the time derivative of $X$ at the boundary $x=0$ is stated next, thanks to the characteristics $B$.

\begin{lemma} \label{lem:invariance}
	For every $(t,x)\in\Omega_T$ and $\tau \in \Sigma_{t,x}$ such that $X(\tau;t,x)<x_0$, with $x_0$ defined in \eqref{A6}, the following holds:
	\begin{enumerate} [align=left, leftmargin=2pt, labelindent=\parindent,label=\textup{(\arabic*)},itemsep=3pt]
	\item The map $s\mapsto X(s;t,x)$ is an increasing $\mathcal C^1$-diffeomorphism from $(\sigma_t(x),\tau)$ to $(0,X(\tau;t,x)),$
	\item for every $s\in(\sigma_t(x),\tau)$ we have the lower bound
	\[\frac{\partial A(X(s;t,x))}{\partial {s}} = \frac{\partial B(s;t,A(x)) }{\partial {s}} \geq  \delta\,.\]
	\end{enumerate}
	Moreover, for every $(t,x)\in\Omega_T$, there holds that 
	\begin{enumerate} [align=left, leftmargin=2pt, labelindent=\parindent,label=\textup{(\arabic*)},itemsep=3pt]  \setcounter{enumi}{2}
	\item $\Sigma_{t,x}=(\sigma_t(x),T)$ if $t\in(0,T)$,  while $\Sigma_{0,x}=[0,T)$,
	\item for every $s\in [t,T)$ we have $X(s,t,x)\geq \min(x,x_0)$.
	\end{enumerate}
\end{lemma}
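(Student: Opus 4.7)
The key observation is that the inflow condition \eqref{A6}, under the reparametrization $B = A \circ X$, translates into a uniform lower bound $\partial_s B \geq \delta$ on the region $\{X < x_0\}$, providing clean control of the characteristics near the degenerate boundary.

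I would begin with Points 3 and 4 (forward behavior). For Point 4, I argue by contradiction. Assume $X(s_1;t,x) < \min(x, x_0)$ for some $s_1 > t$ in the forward lifetime. If $x \leq x_0$, then $\min(x,x_0) = x$ and $X$ must decrease immediately from $X(t) = x$, giving a nonpositive right derivative; but $\partial_s X(t) = a(x)w(t,x) \geq a(x)\delta > 0$ by \eqref{A2} and \eqref{A6}, a contradiction. If $x > x_0$, set $s_0 := \inf\{s > t : X(s;t,x) \leq x_0\}$; continuity gives $X(s_0) = x_0$ with $X(s) > x_0$ on $[t, s_0)$, so $\partial_s X(s_0) \leq 0$, but $\partial_s X(s_0) = a(x_0)w(s_0,x_0) \geq a(x_0)\delta > 0$ by continuity of $w$ and \eqref{A6}, again a contradiction. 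Combined with the sublinear bound \eqref{eq:characteristics curves - uniform bounds}, Point 4 implies the forward lifetime reaches $T$ since $X$ stays bounded and bounded away from $0$; hence for $t \in (0,T)$ one has $\Sigma_{t,x} = (\sigma_t(x),T)$ by the definition of $\sigma_t(x)$, and for $t=0$, $\Sigma_{0,x} = [0,T)$ trivially, giving Point 3.

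Next, for Points 1 and 2, suppose $\tau \in \Sigma_{t,x}$ satisfies $X(\tau;t,x) < x_0$. I first claim $X(s;t,x) < x_0$ for every $s \in (\sigma_t(x),\tau]$: an analogous contradiction shows that if $s^* := \sup\{s \in (\sigma_t(x),\tau] : X(s;t,x) \geq x_0\}$ existed, continuity would give $X(s^*) = x_0$ with $X < x_0$ on $(s^*,\tau]$, forcing $\partial_s X(s^*) \leq 0$ and contradicting $v(s^*,x_0) \geq a(x_0)\delta > 0$. With $X < x_0$ throughout, Point (2) is immediate from the identity $\partial_s B(s;t,A(x)) = V(s, B(s;t,A(x))) = w(s, X(s;t,x)) \geq \delta$ via \eqref{A6}. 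For Point (1), this bound yields $\partial_s X = a(X)w(s,X) > 0$ on $(\sigma_t(x),\tau]$, so $X$ is strictly increasing and, by the $\mathcal{C}^1$ regularity from Lemma \ref{lem:ode-X}, a $\mathcal{C}^1$-diffeomorphism onto its image. The right endpoint of the image is $X(\tau;t,x)$; the left endpoint is $0$ thanks to Lemma \ref{lem:either-or property for sigma} when $\sigma_t(x) > 0$, combined with the integrated lower bound $B(\tau;t,A(x)) - B(s;t,A(x)) \geq \delta(\tau - s)$ which forces $B$, and hence $X$, down to the boundary in finite backward time.

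The subtle point is the identification of the left endpoint of the image in (1): since the field $v = aw$ may degenerate at $x=0$ when $a(0)=0$, it is not obvious a priori how $X$ approaches the boundary. The reparametrization is designed precisely to circumvent this: the rescaled field $V = w \circ A^{-1}$ is uniformly bounded below by $\delta$ near the boundary, so $B$ moves at uniform speed and necessarily reaches $0$ in finite backward time; translating back through the continuous extension $A^{-1}(0) = 0$ then recovers the approach of $X$ to the origin. The remaining verifications reduce to the continuity and positivity properties of $a$, $w$, and $v$.
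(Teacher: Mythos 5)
Your proof follows the same core strategy as the paper: exploit the inflow condition \eqref{A6} to deduce that the rescaled field $V = w\circ A^{-1}$ is bounded below by $\delta$ near the boundary, and derive the invariance of $(0,x_0)$ (backward) and of $(\min(x,x_0),\infty)$ (forward) from the sign of the flow at the threshold. The main structural difference is the ordering: the paper proves the backward invariance (Points 1--2) first, and then obtains Point~4 as a corollary of it (if $X(s;t,y)\le x$ for some $s>t$, negative invariance would force $X(t;t,y)=y\le x$, a contradiction), whereas you prove Points 3--4 independently by a direct crossing-time contradiction. Both routes work; yours is more self-contained, the paper's is slightly slicker since it recycles the earlier step. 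One small slip to fix: in your case $x\le x_0$ for Point~4 you write that ``$X$ must decrease immediately from $X(t)=x$, giving a nonpositive right derivative''. That is not correct as stated, since $X$ could first rise above $x$ and only later drop below it. Take instead $s^*:=\sup\{s\in[t,s_1]:X(s;t,x)\ge \min(x,x_0)\}$ (nonempty since $X(t;t,x)=x$); continuity gives $X(s^*;t,x)=\min(x,x_0)$ with $X<\min(x,x_0)$ on $(s^*,s_1]$, hence $\partial_s X(s^*;t,x)\le 0$, contradicting $\partial_s X(s^*;t,x)=a(\min(x,x_0))\,w(s^*,\min(x,x_0))\ge a(\min(x,x_0))\,\delta>0$, using the continuity of $w$ at $x_0$ when $x\ge x_0$. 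Your treatment of Points~1--2 --- including the identification of the left endpoint of the image via Lemma~\ref{lem:either-or property for sigma} together with the integrated lower bound $B(\tau;t,A(x))-B(s;t,A(x))\ge\delta(\tau-s)$ --- is at least as explicit as the paper's terse ``the first point follows directly''.
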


\begin{proof} 
Let $(t,x)\in \Omega_T$, we may rewrite \eqref{eq:characteristics curves} as
\begin{equation*}
\frac{\partial X(s;t,x)}{\partial s} = a(X(s;t,x))\, w\left(t,(X(s;t,x))\right)\,.
\end{equation*}
Since $a$ is positive, the flow verifies $a(z)w(t,z)>a(z)\delta>0$ for all $(t,z)\in[0,T)\times(0,x_0)$, which shows that the interval $(0,x_0)$ is negatively invariant. In other words, if there exists $\tau \in \Sigma_{t,x}$ such that $X(\tau;t,x)<x_0$ then, $X(s;t,x)<x_0$  and
\[\frac{\partial B(s;t,A(x)) }{\partial s} = V(s,B(s;t,A(x))) = w(s,X(s;t,x)) \ge \delta\,,\]
for all $s\in(\sigma_t(x),\tau)$, which proves the second point. Thus, the first point follows directly from this fact and using that $A$ is increasing with $B(s;t,A(x))=A(X(s;t,x))$. We then prove the last two points. As  $(0,x_0)$ is negatively invariant, and since the flow is positive on $(0,x_0)$, this also proves that $(0,x)$ is negatively invariant for each $x\in(0,x_0)$. We claim that $(x,\infty)$ is positively invariant for all $x\in(0,x_0]$. This can be proved arguing by contradiction: Let $y\in(x,\infty)$ and $t\in[0,T)$; if there exists $s>t$ such that $X(s;t,y)\le x$, then for all times $\tau\leq s$, we have that $X(\tau,t,y)\le x$ because $(0,X(s;t,y))$ is negatively invariant. We deduce that $X(t;t,y)=y \le x$, which contradicts the premise and yields our claim. In fact, this argument readily entails $X(s;t,x)\ge \min(x_0,x)$ for all $(t,x)\in\Omega_T$ and $s\in \Sigma_{t,x}\cap(t,T)$. We conclude thanks to the lower bound and remarking that the Cauchy-Lipschitz theory allows to prolongate solutions up to time $T$ by the regularity of $v$.
\end{proof}

The following technical lemma turns out to be crucial to bound the derivatives of $B$, see equations in \eqref{eq:derivatives of B}. This result also shows that assumption \eqref{H4} in the main text, $\Phi' \in L^1(0,1)$, is close to be optimal to prevent concentration in finite time. This is mirrored by Assumption \eqref{A5} in this general framework.
\begin{lemma} \label{lem:decomposition a phi}
	 Let $\delta>0$ and $x_0$ be given by Assumption \eqref{A6}.
	 For all $t\in(0,T)$, $(s,\tau,s_0)\in(0,t]^3$, $x_1\in(0,x_0]$ and  $x>0$, if $\sigma_\tau(x) \leq s \leq s_0$ and $X(s_0;\tau,x)<x_1$, then there holds that 
    \begin{equation*}
	\int_s^{s_0} \left|  \left(a\, \partial_x w\right)(r,X(r;\tau,x))\right| \, dr  \leq  \frac 1 \delta \int_0^t \int_0^{x_1} \left|\partial_y w(r,y)\right| \, dy dr\:.
	\end{equation*}
\end{lemma}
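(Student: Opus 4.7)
The plan is to change variables from the time variable $r$ to the spatial variable $y=X(r;\tau,x)$ along the characteristic, and to exploit the inward-flow lower bound \eqref{A6} on $w$ in order to absorb the factor $a$ multiplying $\partial_x w$ in the left-hand side.

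First, I would invoke Lemma~\ref{lem:invariance}: since $X(s_0;\tau,x)<x_1\leq x_0$ and the interval $(0,x_0)$ is negatively invariant under the flow, the characteristic stays below $x_1$ on the whole interval $[\sigma_\tau(x),s_0]$. Consequently, the restriction of $r\mapsto X(r;\tau,x)$ to $[s,s_0]$ is an increasing $\mathcal{C}^1$-diffeomorphism onto $[X(s;\tau,x),X(s_0;\tau,x)]\subset[0,x_1]$. Its derivative
\begin{equation*}
\partial_r X(r;\tau,x)=a(X(r;\tau,x))\,w(r,X(r;\tau,x))
\end{equation*}
is bounded from below by $a(X(r;\tau,x))\,\delta$, whence the key pointwise inequality $a(X(r;\tau,x))\leq \delta^{-1}\partial_r X(r;\tau,x)$.

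Second, inserting this inequality into the integrand and applying the substitution $y=X(r;\tau,x)$, with $dy=\partial_r X(r;\tau,x)\,dr$, yields
\begin{equation*}
\int_s^{s_0}\bigl|\,(a\,\partial_x w)(r,X(r;\tau,x))\,\bigr|\,dr
\;\leq\;\frac{1}{\delta}\int_{X(s;\tau,x)}^{X(s_0;\tau,x)}|\partial_y w(r(y),y)|\,dy,
\end{equation*}
where $r(\cdot)$ denotes the inverse diffeomorphism on $(0,X(s_0;\tau,x))$. Since $[X(s),X(s_0)]\subset[0,x_1]$, the integration domain can be enlarged to $(0,x_1)$, leaving us with $\frac{1}{\delta}\int_0^{x_1}|\partial_y w(r(y),y)|\,dy$.

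The final step, which I expect to be the main technical obstacle, is to pass from this integral along the specific characteristic curve $y\mapsto(r(y),y)$ to the double integral $\int_0^t\!\int_0^{x_1}|\partial_y w(r,y)|\,dy\,dr$ appearing in the statement. Comparing a one-dimensional line integral with a two-dimensional integral over the enclosing rectangle requires additional structure. I anticipate combining a Fubini-type manipulation with the monotonicity of $y\mapsto r(y)$ (so that the curve sweeps $[0,x_1]$ in a controlled way) and the integrability of $\partial_x w$ ensured by assumption \eqref{A5}, which guarantees the finiteness of the right-hand side.
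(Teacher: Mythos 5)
Your argument reproduces the paper's proof step for step, right up to the point where you stop: the appeal to Lemma~\ref{lem:invariance} for negative invariance of $(0,x_1)$, the pointwise bound $a(X(r;\tau,x)) \le \delta^{-1}\partial_r X(r;\tau,x)$ coming from \eqref{A6}, and the change of variable $y = X(r;\tau,x)$ yielding $\frac{1}{\delta}\int_{X(s;\tau,x)}^{X(s_0;\tau,x)}|\partial_y w(\theta(y),y)|\,dy$, where $\theta$ is the inverse diffeomorphism of $r\mapsto X(r;\tau,x)$. Where you leave off --- passing from this one-dimensional line integral to the two-dimensional integral over $(0,t)\times(0,x_1)$ --- is precisely the final step of the paper's proof, and your instinct that it is the crux is sound.

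You should, however, scrutinize the paper's handling of that last step rather than take for granted that a Fubini-type argument will finish the job. The paper writes the line integral as $\frac{1}{\delta}\int_{X(s;\tau,x)}^{X(s_0;\tau,x)}\int_s^{s_0}|\partial_y w(r,y)|\,\mathbf{1}_{\theta(y)=r}\,dr\,dy$ and then replaces $\mathbf{1}_{\theta(y)=r}$ by $1$ while enlarging the domain. Read literally as an indicator of the set $\{r:\theta(y)=r\}$, the inner $dr$-integral vanishes; read as a Dirac mass at $r=\theta(y)$, the equality with the line integral holds but the subsequent inequality (bounding a point mass by Lebesgue measure) does not. A line integral along the graph of $\theta$ is not in general dominated by the area integral of the same nonnegative integrand over the enclosing rectangle, so monotonicity of $\theta$ plus Fubini, as you anticipate, will not close the gap for arbitrary $w$ satisfying only \eqref{A1}--\eqref{A6}. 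What keeps the lemma serviceable for the rest of the paper is that in the Lifshitz--Slyozov application one has $\partial_y w(t,y)=-\Phi'(y)$, independent of $t$; the quantity you already reach, $\frac{1}{\delta}\int_0^{x_1}|\Phi'(y)|\,dy$, is finite by \eqref{H4} and is the bound that every later application of Lemma~\ref{lem:decomposition a phi} in fact relies on. In short, the obstacle you flagged is a genuine weak point of the written proof, not something you overlooked; for the intended $w$ the conclusion can be salvaged by stopping where you stopped.
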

\begin{proof}
We notice that thanks to Lemma \ref{lem:invariance} we have  \[X(r;\tau,x)<X(s_0;\tau,x)<x_1<x_0 \quad \mbox{for all}\quad r\in(\sigma_\tau(x),s_0)\,.\]
Hence, using Eq.~\eqref{eq:characteristics curves} and \eqref{A6},
\begin{multline}
\label{eq:210}
\int_s^{s_0}  \left|  \left(a\, \partial_x w\right)(r,X(r;\tau,x))\right| \, dr  =  \int_s^{s_0} \left| \dfrac{(\partial_x w)(r,X(r;\tau,x))}{w(r,X(r;\tau,x))} \dfrac{\partial X(r;\tau,x)}{\partial r}  \right| \, dr\\
\le \frac 1 \delta \int_s^{s_0} \left|(\partial_x w)(r,X(r;\tau,x)) \dfrac{\partial X(r;\tau,x) }{\partial r} \right| \, dr \\
= \frac 1 \delta \int_{X(s;\tau,x)}^{X(s_0;\tau,x)} \left|(\partial_x w)(\theta(y),y)\right| \, dy 
\end{multline} 
where $\theta \colon (0,X(s_0;\tau,x)) \to (\sigma_\tau(x),s_0)$ is the inverse map of the $\mathcal C^1$-diffeomorphism $s\mapsto X(s;\tau,x)$ on $(\sigma_t(x),s_0)$ -here we use the first point of Lemma \ref{lem:invariance}.
Then we notice that $0<X(s;\tau,x)<X(s_0;\tau,x)<x_1$ and $s_0<t$, so that we can write 
\begin{multline*}
\frac 1 \delta \int_{X(s;\tau,x)}^{X(s_0;\tau,x)} \left|(\partial_x w)(\theta(y),y)\right| \, dy = \frac{1}{\delta}  \int_{X(s;\tau,x)}^{X(s_0;\tau,x)} \int_s^{s_0} \left|\partial_y w(r,y)\right| \mathbf 1_{\theta(y)=r}\, dr dy \\
\le \frac 1 \delta \int_{0}^{x_1} \int_0^t \left|\partial_y w(r,y)\right| \, dr dy\,.
\end{multline*}
\end{proof}


\subsection{Diffeomorphism through the characteristic curves}
\label{Ann:2}

Let us define, similarly to the main text, for each $t\in[0,T)$:
\[x_c(t)=\inf \{x>0\,|\,\sigma_t(x)=0\}\:.\]

\begin{lemma}  \label{lem:separation by x_c(t)}
    For each $t\in(0,T)$ we have the following properties:
	\begin{enumerate} [align=left, leftmargin=2pt, labelindent=\parindent,label=\textup{(\arabic*)},itemsep=3pt]
    \item The value $x_c(t)$ is finite and positive,
    \item $\sigma_t$ is a nonincreasing map which is positive on $(0,x_c(t))$,
    \item $\sigma_t$ vanishes on $(x_c(t),\infty)$.
    \end{enumerate}   
    Moreover, for $t=0$ we have: $x_c(0)=0$ and $\sigma_0$ is constantly equal to zero. 
\end{lemma}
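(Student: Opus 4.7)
The plan is to exploit two tools: the reparametrized flow with its lower bound $\partial_s B \geq \delta$ in the inflow zone (Lemma on invariance) to control the backward lifetime for small $x$, and the non-crossing of characteristics (Remark on non-crossing) to establish monotonicity. I will handle $t\in(0,T)$ first; the case $t=0$ is immediate.

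For positivity of $x_c(t)$ with $t\in(0,T)$, fix $x\in(0,x_0)$ where $x_0$ comes from assumption \eqref{A6}. By Lemma on invariance, the characteristic stays in $(0,x_0)$ on $(\sigma_t(x),t)$ and $\partial_s B(s;t,A(x))\geq \delta$ there. Integrating from $\sigma_t(x)^+$ to $t$ and using that $A(X(s;t,x))\to A(0)=0$ as $s\to\sigma_t(x)^+$ when $\sigma_t(x)>0$ (by Lemma on the either-or property), or that $A(X(0;t,x))\geq 0$ when $\sigma_t(x)=0$, in either case we get
\[
A(x) \;\geq\; \delta\bigl(t-\sigma_t(x)\bigr).
\]
Hence $\sigma_t(x)\geq t-A(x)/\delta$, which is strictly positive whenever $x<A^{-1}(\delta t)$. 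Therefore $x_c(t)\geq \min(x_0,A^{-1}(\delta t))>0$. For finiteness of $x_c(t)$, I will use point (3) of Lemma on invariance: for any fixed $y>0$ the forward characteristic $s\mapsto X(s;0,y)$ is defined on all of $[0,T)$, and setting $x:=X(t;0,y)>0$, the semigroup property gives $X(0;t,x)=y>0$, so $0\in\Sigma_{t,x}$ and $\sigma_t(x)=0$. Thus $x_c(t)\leq x<\infty$.

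For monotonicity, suppose $0<x<y$ and argue by contradiction that $\sigma_t(x)<\sigma_t(y)$. Then $\sigma_t(y)\in\Sigma_{t,x}$, so $X(\,\cdot\,;t,x)$ is defined and continuous at $s=\sigma_t(y)$ with positive value. On $(\sigma_t(y),t]$ both characteristics are defined and by non-crossing $X(s;t,x)<X(s;t,y)$; passing to the limit $s\to\sigma_t(y)^+$ yields $X(\sigma_t(y);t,x)\leq 0$ (by Lemma on either-or, $X(s;t,y)\to 0$), contradicting positivity. Hence $\sigma_t$ is nonincreasing. Points (2) and (3) of the statement then follow: on $(0,x_c(t))$ we have $\sigma_t>0$ by the very definition of $x_c(t)$ as an infimum; on $(x_c(t),\infty)$, given $x>x_c(t)$ we pick $z\in[x_c(t),x)$ with $\sigma_t(z)=0$ (which exists since $x_c(t)$ is the infimum), and monotonicity gives $0\leq \sigma_t(x)\leq \sigma_t(z)=0$.

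Finally, for $t=0$, point (3) of Lemma on invariance directly yields $\Sigma_{0,x}=[0,T)$ for every $x>0$, so $\sigma_0(x)=\inf\Sigma_{0,x}=0$, and consequently $x_c(0)=\inf\{x>0\mid\sigma_0(x)=0\}=0$. The main obstacle is the positivity of $x_c(t)$: one must verify carefully that the lower bound $\partial_s B\geq\delta$ remains available throughout $(\sigma_t(x),t)$, which is why I restrict first to $x<x_0$ so that Lemma on invariance applies on the whole backward interval before integrating.
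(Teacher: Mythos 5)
Your proof is correct and follows essentially the same route as the paper's: the invariance/lower-bound lemma on the reparametrized flow for the positivity of $x_c(t)$, forward characteristics from $t=0$ for finiteness, non-crossing for monotonicity, and the infimum definition for the separation. The only stylistic difference is that you prove positivity directly via the quantitative estimate $\sigma_t(x)\geq t-A(x)/\delta$ (hence $x_c(t)\geq\min(x_0,A^{-1}(\delta t))$), whereas the paper phrases the same inequality $B(s;t,A(x))\leq A(x)-\delta(t-s)$ as a contradiction argument.
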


\begin{proof}

\textit{Step 1. Proof that $\sigma_t$ is nonincreasing.} Let $t\in[0,T)$ and $0<x<y$. If $\sigma_t(y)=0$ then $\sigma_t(x)\geq \sigma_t(y)=0$ by definition.  Assume now that $\sigma_t(y)>0$; we prove the monotonicity in this case by a contradiction argument. Therefore, let us assume that $\sigma_t(y)>\sigma_t(x)$. By Remark \ref{rem:no cross}, $X(s;t,x) < X(s;t,y)$ for all $s \in (\sigma_t(x)\vee\sigma_t(y),T)$ and we have 
\[0\leq \lim_{s \to \sigma_t(x)\vee\sigma_t(y)} X(s;t,x) \leq \lim_{s \to \sigma_t(x)\vee\sigma_t(y)} X(s;t,y)\,.\] 
Now thanks to our assumption $\sigma_t(y)>\sigma_t(x)$ and Lemma \ref{lem:either-or property for sigma}, we obtain
\[
0\le X(\sigma_t(y);t,x)\leq \lim_{s\to \sigma_t(y)}X(s;t,y)=0\,,
\] 
which entails $X(\sigma_t(y);t,x)=0$. But this contradicts the definition of the maximal interval $\Sigma_{t,x}$. Thus, $\sigma_t(x)\geq \sigma_t(y)$ as desired.

\smallskip

\textit{Step 2. Proof that $x_c(t)$ is finite, \textit{i.e.} that $\set{x>0}{\sigma_t(x)=0}$ is not empty.} Let $t\in[0,T)$, $y>0$ and $x=X(t;0,y)>0$. By the semigroup property, we have that $X(s;t;x)=X(s;0,y)$ for all $s\in(\sigma_t(x),T)$. Since {$\Sigma_{0,y}=[0,T)$}, the trajectory $s\mapsto X(s;t,x)$ is defined
on $[0,T)$ and hence $\sigma_t(x)=0$. This proves that $\set{x>0}{\sigma_t(x)=0}$ is not empty, thus $x_c(t)$ is finite and nonnegative.
 
\smallskip

\textit{Step 3. Separation by $x_c(t)$.} Let $\{x^n\}\subset \set{x > 0}{\sigma_t(x)=0}$ be a nonincreasing sequence converging to $x_c(t)$. Since $\sigma_t$ is nonincreasing then $0\leq \sigma_t(x) \leq \sigma_t(x^n)=0$ for any $x>x^n$. Thus $(x_c(t),\infty) = \cup_{n\geq 1} (x^n,\infty) \subset \set{x > 0}{\sigma_t(x)=0}$. However, if $x$  is such that $\sigma_t(x)=0$ then $x\ge x_c(t)$ by definition, which proves that the former inclusion is an equality. Moreover, if $x_c(t)>0$, for any $x \in (0,x_c(t))$ we have $\sigma_t(x)>0$ by the construction of $x_c(t)$.

\smallskip

\textit{Step 4. $x_c(t)$ is positive.} Let $t\in(0,T)$. It is sufficient to construct some $x>0$ satisfying $\sigma_t(x)>0$, as then $x_c(t)\geq x>0$. Let $\delta >0$ and $x_0>0$ given by \eqref{A6}. Thus, for all $x\in(0,x_0)$ and $s\in (\sigma_t(x),t)$, since $X(t;t,x)=x<x_0$ we have, by Lemma \ref{lem:invariance}, that
\[
B(t;t,A(x)) - B(s;t,A(x)) \geq \delta (t-s),\quad \mbox{that is,}\quad B(s;t,A(x)) \leq A(x) - \delta(t-s)\,.
\]
Now we are ready to conclude by a contradiction argument. Contrary to what we want, suppose that $\sigma_t(x)=0$ for all $x\in(0,x_0)$. This enables us to compute the limit $\lim_{s\to 0} B(s;t,A(x)) \leq A(x) -\delta t$ for all $x\in(0,x_0)$. This entails that there exists $x_1\in(0,x_0)$ such that 
\[\lim_{s\to 0} B(s;t,A(x_1)) \leq A(x_1) -\delta t <0\,,\]
since $A$ is continuous and $A(0)=0$. This contradicts that, for all $s\in(0,t)$, $B(s;t,A(x_1))=A(X(s;t,x_1))>0$, after Lemma \ref{lem:either-or property for sigma}. Thus, there is some $x>0$ such that $\sigma_t(x)>0$. This concludes the proof of Lemma \ref{lem:separation by x_c(t)}.
\end{proof}

We address now the diffeomorphism given by $x\mapsto X(t;0,x)$. The first part of Prop. \ref{prop:diffeomorphism_merge} in the main text is a particular consequence of the following result (with $s=0$):

\begin{lemma} \label{prop:diffeo}
    For each $t\in(0,T)$ and $s\in[0,t)$ the following statements hold true: 
    \begin{enumerate} [align=left, leftmargin=2pt, labelindent=\parindent,label=\textup{(\arabic*)},itemsep=3pt]
    \item We have $X(t;s,0^+):=\lim_{x\to 0^+} X(t;s,x) \in(0,x_c(t))$.
    \item The map $x\mapsto X(t;s,x)$ is an increasing  $\mathcal{C}^1$-diffeomorphism from $(0,\infty)$ to $(X(t;s,0^+),\infty)$.
    \item 
    \label{it:s1}
    The semigroup property $X(t;\tau,X(\tau;s,0^+)) = X(t;s,0^+)$ holds $\forall \tau\in[s,T)$.
    \item \label{it:s2}
    We have $X(t;0,0^+)=x_c(t)$ and  $X(t;s,x_c(s))=x_c(t)$.
    \item 
    \label{it:bd} The bound $x_c(t)\le C(K,T)$ holds with the constant in \eqref{eq:characteristics curves - uniform bounds}.
    \end{enumerate}
\end{lemma}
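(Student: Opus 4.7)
My plan is to prove the five points in order, leaning on the monotonicity provided by the positivity of $J(t;s,x)=\partial_x X(t;s,x)$ from Lemma~\ref{lem:ode-X} and on the invariance bounds from Lemma~\ref{lem:invariance}. First, since $J>0$, the map $x\mapsto X(t;s,x)$ is strictly increasing and $\mathcal C^1$ on $(0,\infty)$, and monotone convergence gives the existence of the limit $X(t;s,0^+)\ge 0$. I would upgrade this to strict positivity by splitting cases according to whether the trajectory $r\mapsto X(r;s,x)$ (for small $x$) escapes $(0,x_0)$ before time $t$: the negative invariance of $(0,x_0)$ established in the proof of Lemma~\ref{lem:invariance} forces $X(t;s,x)\ge x_0$ in the first case, while in the second case the lower bound $\partial_s B(s;t,A(x))\ge\delta$ on the reparametrized flow yields $A(X(t;s,x))\ge A(x)+\delta(t-s)$. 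Combining the two alternatives, $X(t;s,0^+)\ge\min\{x_0,A^{-1}(\delta(t-s))\}>0$.

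Next I turn to points 2 and 3. Injectivity and a $\mathcal C^1$-inverse follow from $J>0$ via the inverse function theorem, so the image is an interval of the form $(X(t;s,0^+),L)$ with $L\le\infty$. To identify $L=\infty$, I would invoke Lemma~\ref{lem:separation by x_c(t)}: any $y>x_c(t)$ satisfies $\sigma_t(y)=0\le s$, so the backward value $X(s;t,y)$ is defined and finite thanks to \eqref{eq:characteristics curves - uniform bounds}, and the semigroup identity $X(t;s,X(s;t,y))=y$ then places $y$ in the image; monotonicity fills in the rest of $(X(t;s,0^+),\infty)$. For point 3, I would pass to the limit $x\to 0^+$ in the standard identity $X(t;\tau,X(\tau;s,x))=X(t;s,x)$, using the continuous dependence on initial data recorded in Lemma~\ref{lem:ode-X} together with the strict positivity $X(\tau;s,0^+)>0$ from the previous paragraph, so that neither side degenerates.

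The remaining step is to identify $\ell:=X(t;0,0^+)$ with $x_c(t)$ and to derive the a priori bound. For $x>\ell$, the diffeomorphism produces a preimage $y>0$ with $X(t;0,y)=x$, hence $\sigma_t(x)=0$ and $x_c(t)\le\ell$. Conversely, for $0<x<\ell$ monotonicity of $X(t;0,\cdot)$ prevents any preimage, so $\sigma_t(x)>0$ (otherwise the semigroup $X(t;0,X(0;t,x))=x$ would produce one), giving $x_c(t)\ge\ell$. Applying the boundary semigroup of the previous paragraph with initial time $0$ then yields $X(t;s,x_c(s))=X(t;s,X(s;0,0^+))=X(t;0,0^+)=x_c(t)$; since Lemma~\ref{lem:separation by x_c(t)} gives $x_c(s)>0$ for $s>0$ and $X(t;s,\cdot)$ is strictly increasing, this also forces $X(t;s,0^+)<x_c(t)$ for $s>0$, thereby completing point 1 (with equality in point 4 handling the edge case $s=0$). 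The uniform bound $x_c(t)\le C(K,T)$ is immediate by letting $x\to 0^+$ in \eqref{eq:characteristics curves - uniform bounds} applied to $X(t;0,x)\le C(K,T)(1+x)$. The main obstacle, as I see it, is exactly the identification $\ell=x_c(t)$: both the surjectivity onto $(X(t;s,0^+),\infty)$ and the boundary-continuous semigroup must be in hand before the two inclusions can be closed, which is why I sequence the proof so that point 4 comes last.
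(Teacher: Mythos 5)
Your arguments for positivity of the limit, for the diffeomorphism onto $(X(t;s,0^+),\infty)$, for the semigroup properties, and for the bound $x_c(t)\le C(K,T)$ are sound and align with the paper (the surjectivity step via backward flow from $y>x_c(t)$ is a small variation on the paper's Step 1, equally valid). The gap is in the identification $\ell:=X(t;0,0^+)=x_c(t)$, specifically in the inequality $\ell\le x_c(t)$. You argue: if $0<x<\ell$ and $\sigma_t(x)=0$, then $X(0;t,x)$ is a preimage of $x$ under $X(t;0,\cdot)$, contradicting $x<\ell$. But $\sigma_t(x)=0$ only tells you $\Sigma_{t,x}\supseteq(0,T)$; it does \emph{not} guarantee that $\lim_{s\to 0^+}X(s;t,x)$ is positive. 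A characteristic could approach the boundary asymptotically as $s\to 0^+$ (indeed Lemma~\ref{lem:invariance} gives $A(X(s;t,x))\ge\delta s$, which is perfectly consistent with $X(s;t,x)\to 0$), in which case $X(0;t,x)$ is not defined as a point of $(0,\infty)$ and produces no preimage. Ruling this scenario out is exactly where the degeneracy of the transport field bites.

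The paper closes this gap in Step~3 of its proof by a quantitative contradiction argument: assuming $x_c(t)<\bar x=X(t;0,0^+)$, it takes $x_c(t)<y_1<y_2<\bar x$ (all with $\sigma_t(y_i)=0$ and $\lim_{s\to0}X(s;t,y_i)=0$) and shows, via the lower bound on the Jacobian $I(0;t,A(\cdot))$ coming from Lemma~\ref{lem:decomposition a phi} and assumption \eqref{A5}, that $B(s;t,A(y_2))-B(s;t,A(y_1))\ge\int_{y_1}^{y_2}c_1(z)/a(z)\,dz>0$ uniformly in $s$, while both terms must tend to $0$ as $s\to0^+$. This is the technical heart of the lemma, and it depends crucially on the integrability hypothesis on $\partial_x w$; without some such quantitative separation of characteristics your sequencing cannot close. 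Since you also use point~4 to obtain the strict upper bound $X(t;s,0^+)<x_c(t)$ in point~1, that part inherits the gap.
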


\begin{proof}
In the sequel we use $\delta>0$ and $x_0>0$ given by \eqref{A6}. Let $t\in(0,T)$ and $s\in[0,t)$. Lemma \ref{lem:invariance} ensures that $\Sigma_{0,x}=[0,T)$ and $\Sigma_{s,x}=(\sigma_s(x),T)$ for all $x>0$. Then $x\mapsto X(t;s,x)$ is continuously differentiable on $(0,\infty)$ by the Cauchy--Lipschitz theory. Its derivative, given by~\eqref{eq:characteristics curves - derivatives in x and t}, is strictly positive. Thus, the map $x\mapsto X(t;s,x)$ is strictly increasing and then a diffeomorphism onto its image. We prove below that $\lim_{x\to \infty} X(t;s,x)=+\infty$ for any $s\in[0,t)$, that $X(t;0,0^+)=x_c(t)$, that $0<X(t,s,0^+)<x_c(t)$ for $s\in(0,t)$ and the semigroup properties. The bound on $x_c(t)$ in item {\it \ref{it:bd}} is a direct consequence of the bound \eqref{eq:characteristics curves - uniform bounds} at the limit $x\to 0$.
  
\smallskip
 
\textit{Step 1. Proof of $\lim_{x\to \infty }X(t;s,x)=\infty$.}
We fix $y>0$. Using the bound \eqref{eq:characteristics curves - uniform bounds} with $x=X(t;s,y)$ we deduce that
\[ 
X(\tau;t,X(t;s,y) )\leq C(K,T)(1+X(t;s,y))\,, \ \forall\, \tau \in \Sigma_{s,y}=(\sigma_s(y),T)=(\sigma_t(x),T).
\]
Setting $\tau=s$ we obtain
\[ y \leq C(K,T)(1+X(t;s,y))\,. \]
This concludes the proof by taking the limit $y \to \infty$.

\smallskip
 
 \textit{Step 2. Proof that $X(t;s,0^+)>0$}.
  Let $\{x_n\}$ a positive, decreasing sequence converging to zero.  By Lemma \ref{lem:invariance}, either $X(t;s,x_n)>x_0$ or $B(t;s,A(x_n))-A(x_n)\geq \delta (t-s)$. Thus, 
\[\lim_{n\to\infty}X(t;s,x_n)\geq \min(x_0,A^{-1}(\delta (t-s)))>0\,.\]
 
\textit{Step 3. Proof of $X(t;0,0^+)=x_c(t)$}. Let $t\in(0,T)$. We take a positive, nonincreasing sequence $\{x^n\}$ converging to zero. As $x\mapsto X(t;0,x)$ is monotonically  increasing we can define
\[\bar x := \lim_{x\to 0} X(t;0,x) =\lim_{x^n\searrow 0} X(t;0,x^n)\,.\]
 Note that $\sigma_t(X(t;0,x^n))=0$ and then $X(t;0,x^n)\ge \bar x \ge x_c(t)$, as $x_c(t)=\inf\set{x>0}{\sigma_t(x)=0}$. 

We prove that $\bar x = x_c(t)$ by a contradiction argument. Assume that 
\begin{equation}\label{eq:HA_x_xc}
\bar x>x_c(t). 
\end{equation} 
Let $y\in(x_c(t),\bar x)$; we have $\sigma_t(y)=0$ as $y>x_c(t)$. Since $y<\bar x\leq X(t;0,x^n)$ for any $n$, we obtain  
\[\lim_{s\to0} X(s;t,y) \leq \lim_{s\to0} X(s;t,X(t;0,x^n))=\lim_{s\to0} X(s;0,x^n)=x^n\,. \]
Passing to the limit $n\to \infty$ 
 we deduce that 
\[\lim_{s\to0} X(s;t,y)=0\,, \ \text{and hence}\ \lim_{s\to0} B(s;t,A(y))=0\]
by the continuity of $A$ at zero.

Consider now $y_1,y_2$ such that $x_c(t)<y_1<y_2<\bar x$. There holds that 
\begin{multline}
\label{eq:entre2B}
B(s;t,A(y_2))-B(s;t,A(y_1))=\int_{y_1}^{y_2} \frac{\partial B(s;t,A(z))}{\partial z}\, dz\\
=\int_{y_1}^{y_2} \left.\frac{\partial B(s;t,y)}{\partial y}\right|_{y=A(z)} \, \frac{dz}{a(z)}\,.
\end{multline}
We look for a lower bound on this quantity. Note that $\sigma_t(y_2)=0$ and that  $\lim_{s\to0} X(s;t,y_2)=0$. Thus, there exists $s_0$ such that $X(s_0;t,z)<X(s_0;t,y_2)<x_0$ for any $z\in(y_1,y_2)$. Now we use Lemma \ref{lem:decomposition a phi}: for any $s\in(0,s_0)$ we have that
\begin{equation}
\nonumber
\int_s^{s_0} \left|  \left(a\, \partial_x w\right)(r,X(r;t,z))\right| \, dr  \leq  \frac 1 \delta \int_0^t\int_0^{x_0} \left|\partial_\xi w (r,\xi)\right| \, d\xi dr
\end{equation}
 and from Eq.~\eqref{eq:derivatives of B} we deduce that for any $z\in(y_1,y_2)$,
\begin{multline}\label{eq:minor_bound_derivativeB}
\left.\frac{\partial B(s;t,y)}{\partial y}\right|_{y=A(z)}
\geq \exp\left(- \frac 1 \delta \int_0^t\int_0^{x_0} \left|\partial_\xi w(r,\xi)\right| \, d\xi dr \right.
\\
\left.
-\int_{s_0}^t\left|  \left(a\, \partial_x w\right)(r,X(r;t,z))\right| \, dr \right)
\coloneqq c_1(z)>0\,.
\end{multline}
Hence, as the lower bound on Eq.~\eqref{eq:minor_bound_derivativeB} is independent of $s$, letting $s\to0$ in \eqref{eq:entre2B} we obtain 
\begin{equation}
\nonumber
0> \int_{y_1}^{y_2} \frac{c_1(z)}{a(z)} \, dz\,,
\end{equation}
which is a contradiction. Thus assumption \eqref{eq:HA_x_xc} is absurd and therefore $X(t;0,0^+)\\ =\bar x = x_c(t)$. 

\smallskip

\textit{Step 4. Semigroup property in {\it \ref{it:s2}}.} Recall that $X(t;s,X(s;0,x))=X(t;0,x)$ for all $x>0$ and $s\in(0,t)$. By continuity we can pass to the limit $x \to 0$ so that $X(t;s,X(s;0,0^+))=X(t;0,0^+)$. Thanks to {\it Step 3} this reads $X(t;s,x_c(s))=x_c(t)$. 

\smallskip

\textit{Step 5. Proof that $X(t;s,0^+)<x_c(t)$ and the semigroup property in point  {\it \ref{it:s1}}}. 
Let $t\in(0,T)$, $s\in(0,t)$ and $x<x_0$. To start we point out that the limit $X(t,s,0^+)$ exists since $x\mapsto X(t;s,x)$ is positive and monotonically increasing.  By Lemma \ref{lem:invariance}, there holds that $X(s;0,x)>\min(x,x_0)=x$ for any $s\in(0,t)$. Thus, by Remark \ref{rem:no cross},
\[X(t;0,x)=X(t;s,X(s;0,x))\ge X(t;s,x)\,.\]
Taking $x\to 0^+$, we deduce that $x_c(t)=X(t;0,0^+)\geq X(t;s,0^+)$. Now we recall that $X(s;0,0^+)=x_c(s)>0$ by Lemma \ref{lem:separation by x_c(t)}. Thus, for   $y\in(0,x_c(s))$ we have, by Remark \ref{rem:no cross},
\[X(t;s,0^+)\leq X(t;s,y)\leq X(t;s,x_c(s))=x_c(t).\]
Then it follows that $x_c(t)> X(t;s,0^+)$ as desired: if we had $x_c(t)= X(t;s,0^+)$ for some $s>0$, we would deduce that $ X(t;s,y)= x_c(t)$ for any $y\in(0,x_c(s))$, which contradicts that $x\mapsto X(t;s,x)$ is a diffeomorphism onto $(0,\infty)$.	 Finally, since $X(t;\tau,X(\tau;s,x)) = X(t;s,x)$ for all $\tau \in(s,T)$, by continuity at the limit $x \to 0$ we obtain the semigroup property in point  {\it \ref{it:s1}}. 
\end{proof}

A useful consequence of Lemma \ref{prop:diffeo} is the next lemma,

\begin{lemma} \label{rem:twotimes}
For any $0<s_1<s_2<t$ we have $X(t;s_2,0^+)<X(t;s_1,0^+)$.
\end{lemma}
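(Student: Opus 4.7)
The plan is to deduce the strict inequality as a direct consequence of the semigroup property in Lemma~\ref{prop:diffeo}, item \ref{it:s1}, combined with the strict monotonicity of the spatial flow and the fact that the limits $X(\tau;s,0^+)$ are strictly positive.

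\textbf{Key steps.} First, I would apply the semigroup identity from item \ref{it:s1} with $s=s_1$ and $\tau=s_2$, which yields
\begin{equation*}
X(t;s_2, X(s_2;s_1,0^+)) \;=\; X(t;s_1,0^+).
\end{equation*}
Second, by Lemma~\ref{prop:diffeo}, item 1, the value $X(s_2;s_1,0^+)$ belongs to $(0,x_c(s_2))$ and is therefore \emph{strictly positive}. Third, by Lemma~\ref{prop:diffeo}, item 2, the map $x\mapsto X(t;s_2,x)$ is a strictly increasing $\mathcal{C}^1$-diffeomorphism from $(0,\infty)$ onto $(X(t;s_2,0^+),\infty)$; in particular, for every $y>0$,
\begin{equation*}
X(t;s_2,y) \;>\; X(t;s_2,0^+).
\end{equation*}
Applying this with $y=X(s_2;s_1,0^+)>0$ and using the semigroup identity from the first step gives
\begin{equation*}
X(t;s_1,0^+)\;=\;X(t;s_2,X(s_2;s_1,0^+))\;>\;X(t;s_2,0^+),
\end{equation*}
which is the desired strict inequality.

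\textbf{Anticipated difficulty.} There is essentially no obstacle here, since every ingredient has already been established in Lemma~\ref{prop:diffeo}. The only point worth double-checking is that the semigroup relation used in the first step is valid at the boundary limit $0^+$ and not merely for positive starting points, but this is precisely the content of item \ref{it:s1}, which was obtained by passing to the limit $x\to 0^+$ in the interior semigroup identity. Strict (as opposed to non-strict) inequality then follows because $X(s_2;s_1,0^+)$ is strictly positive and $x\mapsto X(t;s_2,x)$ is a diffeomorphism onto $(X(t;s_2,0^+),\infty)$, so its values on $(0,\infty)$ never attain the boundary limit.
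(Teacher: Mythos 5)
Your proof is correct and uses the same ingredients as the paper (the boundary semigroup identity from item~\ref{it:s1} of Lemma~\ref{prop:diffeo}, the positivity of the boundary limits $X(\tau;s,0^+)$, and the strict monotonicity of the flow in its third argument), but organizes them into a direct argument rather than the paper's two-case contradiction. The paper first assumes $X(t;s_2,0^+) > X(t;s_1,0^+)$, uses the diffeomorphism property of $x\mapsto X(t;s_1,x)$ to produce some $x>0$ with $X(t;s_1,x)=X(t;s_2,0^+)$, and then derives the contradiction $0=X(s_2;t,X(t;s_2,0^+))=X(s_2;s_1,x)>0$; it then treats the equality case separately. That route requires a little care with the domain of the backward flow $X(s_2;t,\cdot)$, since a priori $X(t;s_2,0^+)=\sigma_t^{-1}(s_2)$ sits on the boundary of that domain and the value $0$ is really a one-sided limit. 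Your direct chain $X(t;s_1,0^+)=X(t;s_2,X(s_2;s_1,0^+))>X(t;s_2,0^+)$, with strictness coming from $X(s_2;s_1,0^+)\in(0,x_c(s_2))$ and the fact that $x\mapsto X(t;s_2,x)$ maps $(0,\infty)$ onto the open interval $(X(t;s_2,0^+),\infty)$, stays entirely inside well-defined quantities and avoids the case split. Both proofs are valid; yours is the cleaner formulation of the same underlying monotonicity argument.
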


\begin{proof}
This can be shown by a contradiction argument; let us first assume that $X(t;s_2,0^+) > X(t;s_1,0^+)$. Since $x\mapsto X(t;s_1,x)$ is a diffeomorphism from $(0,\infty)$ to $(X(t;s_1,0^+),\infty)$, there exists $x>0$ such that $X(t;s_1,x)=X(t;s_2,0^+)$. Now we have that $X(s_2;t,X(t;s_2,0^+))=0$ thanks to semigroup properties, but we also have 
\[
X(s_2;t,X(t;s_2,0^+))= X(s_2;t,X(t;s_1,x))=X(s_2;s_1,x) > 0
\] by Lemma \ref{lem:invariance} -recall that $s_2>s_1$- and we reach a contradiction.\\ In the case  $X(t;s_2,0^+)=X(t;s_1,0^+)$ we conclude that $0=X(s_2;t,X(t;s_2,0^+))= X(s_2;s_1,0^+)>0$ by Lemma \ref{prop:diffeo}, a contradiction again. 
\end{proof}

Now we address the map given by $s\mapsto\sigma_t^{-1}(s)$.
The second part of Prop. \ref{prop:diffeomorphism_merge}, in the main text, is a particular consequence of the following result:
\begin{prop} \label{prop:diffeomorphism sigma_t} 
    For each $t\in(0,T)$, the map $s\mapsto\sigma_t^{-1}(s)$ is a decreasing $\mathcal{C}^1$-diffeomorphism from $(0,t)$ to $(0,x_c(t))$ satisfying, 
    \[\sigma_t^{-1}(s) \leq C(K,T) \]
    with $C(K,T)$ the constant in \eqref{eq:characteristics curves - uniform bounds}. Its derivative is given by 
    \begin{equation}  \label{prop:derivative of sigma-1}
     \dfrac{\partial \sigma_t^{-1}(s) }{\partial {s}} = - a(\sigma_t^{-1}(s)) w(s,0)\exp\left(-   \int_{s}^t \left(a\, \partial_x w\right)(\tau,\sigma_{\tau}^{-1}(s))\, d\tau \right)
    \end{equation} 
    for all $t\in(0,T)$ and $s\in(0,t)$. Moreover, $\sigma_t^{-1}(s) = \lim_{x\to 0^+} X(t;s,x)$ and $\sigma_\tau^{-1}(\sigma_t(x)) = X(\tau;t,x)$ for all $(t,x)\in\Omega_T^*$ and $\tau \in \Sigma_{t,x}$.
\end{prop}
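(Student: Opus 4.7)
The strategy is to identify $\sigma_t^{-1}(s)$ with the limit $g(s) := \lim_{x\to 0^+} X(t;s,x)$, which by Lemma \ref{prop:diffeo} is a well-defined value in $(0,x_c(t))$ for each $s\in(0,t)$, and to differentiate through the reparametrization $B$ rather than $X$ itself. First I would establish that $g$ is strictly decreasing: this is immediate from Lemma \ref{rem:twotimes}. Next, I would prove the key identity $\sigma_t(g(s)) = s$. For ``$\le$'', the semigroup property gives $X(s;t,X(t;s,y)) = y > 0$ for every $y>0$, hence $\sigma_t(X(t;s,y)) \le s$, and passing $y\to 0^+$ together with the monotonicity established in Lemma \ref{lem:separation by x_c(t)} yields $\sigma_t(g(s)) \le s$. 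For ``$\ge$'', the diffeomorphism property in Lemma \ref{prop:diffeo} implies that $y\mapsto X(s;t,y)$ is a diffeomorphism from $(g(s),\infty)$ onto $(0,\infty)$, so the trajectory through $g(s)$ at time $t$ reaches the boundary precisely at time $s$ by continuity of the flow in its third argument. This furnishes $\sigma_t^{-1}(s) = X(t;s,0^+)$, and the semigroup identity $\sigma_\tau^{-1}(\sigma_t(x)) = X(\tau;t,x)$ then follows by applying this formula at $s=\sigma_t(x)$ combined with the semigroup property for $X$. The uniform bound $\sigma_t^{-1}(s) \le C(K,T)$ follows from $\sigma_t^{-1}(s) \le x_c(t)$ and item \ref{it:bd} of Lemma \ref{prop:diffeo}.

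To compute the derivative I would switch to $B$, because the factored-out degeneracy makes $B(s;t,y)$ differentiable up to $y = 0$, whereas $X(s;t,x)$ is not in general differentiable up to $x = 0$. Setting $y(s) := A(\sigma_t^{-1}(s))$ and using the reparametrization identity $B(s;t,A(x))=A(X(s;t,x))$ together with $X(s;t,\sigma_t^{-1}(s)) = 0$, one obtains the boundary identity $B(s;t,y(s)) = 0$. Differentiating in $s$ gives
\[
V(s,0) + I(s;t,y(s))\, y'(s) = 0,
\]
so $y'(s) = -w(s,0)/I(s;t,y(s))$. Since $y'(s) = \sigma_t^{-1}{}'(s)/a(\sigma_t^{-1}(s))$, and since the reparametrization identity also gives $A^{-1}(B(\tau;t,y(s))) = X(\tau;t,\sigma_t^{-1}(s)) = \sigma_\tau^{-1}(s)$ for $\tau\in[s,t]$, substituting the explicit expression \eqref{eq:derivatives of B} for $I$ reproduces exactly the formula \eqref{prop:derivative of sigma-1}. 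The $\mathcal{C}^1$-diffeomorphism property then follows because this derivative is continuous in $s$ and strictly negative, using $w(s,0)\ge\delta>0$ from \eqref{A6} and $a(\sigma_t^{-1}(s)) > 0$ from \eqref{A2}, combined with the fact that $g(s)\to x_c(t)$ as $s\to 0^+$ (from item \ref{it:s2} of Lemma \ref{prop:diffeo}) and $g(s)\to 0$ as $s\to t^-$ (from $X(t;t,\cdot) = \mathrm{id}$ and continuity).

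The principal obstacle is justifying the differentiation of $B$ at the boundary $y=0$: concretely, one must show that $\partial_y B(s;t,y) = I(s;t,y)$ extends continuously to $y=0$ along the curve $y=y(s)$, with the explicit limit being the integral formula above evaluated along $\sigma_\tau^{-1}(s)$. This is precisely what Lemma \ref{lem:decomposition a phi} enables: the integrand $(a\,\partial_x w)(\tau, X(\tau;t,x))$ appearing in \eqref{eq:derivatives of B} is dominated uniformly in a neighborhood of the boundary by an integrable quantity controlled by $\partial_x w$ on a fixed compact set, which is $L^1$ by \eqref{A5}. Dominated convergence then allows passage to the limit inside the exponential, yielding continuity of $I$ up to the boundary and legitimizing the implicit-function argument; continuity of $\sigma_t^{-1}{}'$ in $s$ follows by the same dominated-convergence principle applied at nearby values of $s$.
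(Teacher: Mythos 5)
Your overall structure is sound and the final derivative formula comes out right, but you take a genuinely different route from the paper to compute it, and there are a couple of places where the sketch glosses over the actual difficulty.

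For the derivative, the paper works in the interior first: for fixed $x>0$ it integrates $\partial_\tau B(t;\tau,A(x))=-w(\tau,x)\,I(t;\tau,A(x))$ over $\tau\in(s,t)$, then passes $x\to 0^+$ on both sides using Lemma~\ref{lem:decomposition a phi} and a dominated-convergence argument (Steps~1--2), obtaining the explicit integral formula $A(\sigma_t^{-1}(s))=\int_s^t w(\tau,0)\exp(\cdots)\,d\tau$, from which differentiability in $s$ and the formula~\eqref{prop:derivative of sigma-1} are immediate (Step~3). You instead pass to the boundary first, obtain the implicit relation $B(s;t,y(s))=0$ along the curve $y(s)=A(\sigma_t^{-1}(s))$, and differentiate that relation. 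This is a legitimate alternative, and you correctly identify that the ``principal obstacle'' is showing $I(s;t,y)$ extends continuously up to the boundary $y=y(s)^+$. But the difficulty is slightly understated: $(s,y(s))$ sits on the \emph{boundary} of the domain of $(s,y)\mapsto B(s;t,y)$, so what you need is a one-sided chain rule there (the implicit-function theorem at a boundary point). That is at least as much work as the paper's integrate-then-limit argument, and the dominated-convergence step you defer to Lemma~\ref{lem:decomposition a phi} is in fact precisely where the paper expends most of its effort. So the two routes have comparable cost; yours is cleaner to state but harder to justify locally, theirs is more pedestrian but never leaves the interior until the final limit.

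Two concrete gaps in the rest of the plan. First, the argument that ``passing $y\to 0^+$ together with the monotonicity established in Lemma~\ref{lem:separation by x_c(t)} yields $\sigma_t(g(s))\le s$'' does not go through on monotonicity alone: $X(t;s,y)>g(s)$ and $\sigma_t$ nonincreasing give $\sigma_t(g(s))\ge\sigma_t(X(t;s,y))$, which is the wrong direction. You need either continuity of $\sigma_t$ (the content of Lemma~\ref{lem:sigma homeo}, which you are implicitly bypassing) or a direct ODE/contradiction argument as in Lemma~\ref{lem:bound sigma}. Second, by working directly with $g:=\lim_{x\to 0^+}X(t;s,x)$ you skip the paper's proof that $\sigma_t$ is a homeomorphism (Lemma~\ref{lem:sigma homeo}, which includes left/right continuity of $\sigma_t$ and $\lim_{x\to 0}\sigma_t(x)=t$). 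That is not automatically a loss: proving that $g$ is a strictly decreasing bijection from $(0,t)$ onto $(0,x_c(t))$ with $\sigma_t\circ g=\operatorname{id}$ would indeed establish $\sigma_t=g^{-1}$. But the endpoint limits $g(0^+)=x_c(t)$ and $g(t^-)=0$ and the identity $\sigma_t\circ g=\operatorname{id}$ need the same level of rigor as Lemma~\ref{lem:sigma homeo}, so this is a relocation of work rather than a shortcut.
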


We divide the proof of Proposition \ref{prop:diffeomorphism sigma_t} into several auxiliary lemmas.

\begin{lemma}\label{lem:decreasing sigma and limit}
  For all $t\in (0,T)$, $\sigma_t$ is  (strictly) decreasing on $(0,x_c(t))$ and
    \begin{equation}
    \label{eq:decreasing sigma and limit}
    \lim_{y\to 0^+}X(t,\sigma_t(x),y)=x \quad \mbox{for any}\ \ x\in(0,x_c(t)).
    \end{equation}
\end{lemma}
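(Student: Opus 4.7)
The limit formula \eqref{eq:decreasing sigma and limit} implies the strict monotonicity directly: if $\sigma_t(x_1)=\sigma_t(x_2)=s_0>0$ with $0<x_1<x_2<x_c(t)$, applying it at each point forces $x_1=X(t;s_0,0^+)=x_2$, a contradiction. So the whole task reduces to showing that, for a fixed $x\in(0,x_c(t))$ with $s_0:=\sigma_t(x)>0$, one has $X(t;s_0,0^+)=x$. I would argue by contradiction, ruling out separately $X(t;s_0,0^+)<x$ and $X(t;s_0,0^+)>x$; the three tools in play are the diffeomorphism $y\mapsto X(t;s_0,y)\colon(0,\infty)\to(X(t;s_0,0^+),\infty)$ from Lemma~\ref{prop:diffeo}, the non-crossing of characteristics (Remark~\ref{rem:no cross}), and the integrability control provided by Lemma~\ref{lem:decomposition a phi}.

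\smallskip

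\textbf{The easy direction.} If $X(t;s_0,0^+)<x$, then $x$ lies in the range of $y\mapsto X(t;s_0,y)$, so there is a unique $y^*>0$ with $X(t;s_0,y^*)=x$. The curve $s\mapsto X(s;s_0,y^*)$ then solves the characteristic ODE, passes through $(t,x)$, and by interior Cauchy--Lipschitz uniqueness coincides with $s\mapsto X(s;t,x)$ on the intersection of their lifetimes. Concatenating both provides a prolongation of $s\mapsto X(s;t,x)$ down to $s=s_0$ with $X(s_0;t,x)=y^*>0$ (and a bit further below, by local existence), contradicting $s_0=\inf\Sigma_{t,x}$.

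\smallskip

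\textbf{The hard direction.} Assume $X(t;s_0,0^+)>x$ and pick $y^*\in(x,X(t;s_0,0^+))$. Monotonicity of $\sigma_t$ yields $\sigma_t(y^*)\leq s_0$, while $\sigma_t(y^*)<s_0$ would give $X(s_0;t,y^*)>0$ and hence $y^*=X(t;s_0,X(s_0;t,y^*))\in(X(t;s_0,0^+),\infty)$, contradicting the choice of $y^*$. Thus $\sigma_t(y^*)=s_0$, so both $X(\cdot;t,x)$ and $X(\cdot;t,y^*)$ vanish in the limit as $s\to s_0^+$, and in the reparametrised picture
\[
B(s;t,A(y^*))-B(s;t,A(x)) = \int_{A(x)}^{A(y^*)} I(s;t,y)\,dy \xrightarrow[s\to s_0^+]{} 0.
\]
The plan is to contradict this by exhibiting a uniform lower bound $I(s;t,y)\geq c>0$ for $y$ in a compact subset of $(A(x),A(y^*))$ and $s$ close to $s_0$. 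Using the formula \eqref{eq:derivatives of B}, this amounts to bounding $\int_s^t|(a\,\partial_x w)(\tau,X(\tau;t,A^{-1}(y)))|\,d\tau$ from above uniformly in $s$ and in $y$. I would pick an intermediate time $\tilde s\in(s_0,t)$ such that all the relevant curves $\tau\mapsto X(\tau;t,A^{-1}(y))$ stay below $x_0$ on $(s_0,\tilde s)$, which is possible by the monotone ordering of characteristics combined with $X(\tau;t,y^*)\to 0$ as $\tau\to s_0^+$, and apply Lemma~\ref{lem:decomposition a phi} on $(s,\tilde s)$ to absorb the singular boundary behaviour via the $L^1$ norm of $\partial_x w$ supplied by \eqref{A5}; the complementary integral on $(\tilde s,t)$ is harmless since the curves stay bounded away from zero there and $a\,\partial_x w$ is locally bounded by \eqref{A2}--\eqref{A3}. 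The main technical obstacle is precisely this uniform lower bound: it hinges on the interplay between the invariance condition \eqref{A6} (delivering the $1/\delta$ factor in Lemma~\ref{lem:decomposition a phi}) and the integrability of $\partial_x w$ near the origin furnished by \eqref{A5}, which is exactly where the reparametrisation through $A$ earns its keep.
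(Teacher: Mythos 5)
Your plan is correct, and the decisive technical step -- the uniform lower bound on the Jacobian $I(s;t,y)$ obtained by combining Lemma~\ref{lem:decomposition a phi} (which absorbs the boundary singularity into the $L^1$ norm of $\partial_x w$ via the $1/\delta$ factor) with the local $L^\infty$ bound on $a\,\partial_x w$ away from the origin -- is exactly the estimate that drives the paper's proof as well. The difference lies purely in the logical organization, and your version is in fact a little tighter. The paper proves strict monotonicity of $\sigma_t$ \emph{first}, directly: assuming $\sigma_t(y)=\sigma_t(x)$ for $y<x$, it derives the lower bound $B(s;t,A(x))-B(s;t,A(y))\ge c\int_y^x dz/a(z)$ and lets $s\to\sigma_t(x)$ to reach a contradiction. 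Only then does it establish the limit identity, in two further steps: the inequality $X(t;\sigma_t(x),0^+)\ge x$ comes from a monotone-sequence argument resting on Lemma~\ref{lem:invariance} and the just-proved strict monotonicity, while the reverse inequality uses the non-crossing property (Remark~\ref{rem:no cross}) and again strict monotonicity. You instead prove the limit identity first and let strict monotonicity fall out of it for free. Your ``easy direction'' ($X(t;s_0,0^+)<x$ is impossible) is a clean prolongation argument using the diffeomorphism in Lemma~\ref{prop:diffeo} and the maximality of $\Sigma_{t,x}$, which replaces the paper's more elaborate sequence-plus-invariance step. Your ``hard direction'' ($X(t;s_0,0^+)>x$ is impossible) reduces, once you observe $\sigma_t(y^*)=s_0$, to exactly the paper's Step~1 estimate, applied to the pair $(x,y^*)$ rather than an arbitrary pair. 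So: same engine, different chassis, and yours avoids the auxiliary sequence argument entirely. One small remark for a polished write-up: as you note in passing, the lower bound $I(s;t,y)\ge c>0$ can in fact be obtained on the whole interval $(A(x),A(y^*))$ rather than merely a compact subset (this is what the paper does); the interior approximate time $\tilde s$ with $X(\tilde s;t,y^*)<x_0$ together with the invariance Lemma~\ref{lem:invariance} keeps all the intermediate curves trapped between $X(\tilde s;t,x)>0$ below and $C(K,T)(1+y^*)$ above on $[\tilde s,t]$, uniformly in $y$.
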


\begin{proof}
Let $\delta>0$ and $x_0$ given by \eqref{A6}. Fix $t\in(0,T)$ and $0<y<x<x_c(t)$. We know from Lemma \ref{lem:separation by x_c(t)} that $\sigma_t(y) \ge \sigma_t(x)$ and we will prove that equality cannot hold. Let us assume that $\sigma_t(x)=\sigma_t(y)$ and argue by contradiction. By Lemma \ref{lem:either-or property for sigma}, there exists $s_0\in(\sigma_t(x),t)$ such that $X(s_0;t,x)<x_0$ and by Remark \ref{rem:no cross} we also have $X(s_0;t,z)<x_0$ for all $z\in(y,x)$.
By Lemma \ref{lem:decomposition a phi}, 
\begin{multline}\label{eq:inter decreasing sigma 0}
    \left| \int_s^t \left(a\, \partial_x w\right)(r,X(r;t,z)) \, dr \right| \leq  \frac 1 \delta \int_0^t\int_0^{x_0} \left|\partial_x w(r,\xi)\right| \, d\xi dr \\+ \int_{s_0}^t \left|\left(a\, \partial_x w\right)(r,X(r;t,z))\right| \, dr
\end{multline}
for all $s\in (\sigma_t(x),s_0)$ and $z\in(y,x)$. Using Lemma \ref{lem:invariance} and the bound \eqref{eq:characteristics curves - uniform bounds} we can now estimate
\[
0 <x_m\coloneqq X(s_0;t,y) \le X(\tau;t,y) \le X(\tau;t,z) \le x_M\coloneqq C(K,T)(1+x)
\]
for all $z\in(y,x)$ and $\tau \in(s_0,t)$. We then have from Eq.~\eqref{eq:inter decreasing sigma 0},
\begin{equation}\label{eq:inter decreasing sigma 1}
    \left| \int_s^t \left(a\, \partial_x w\right)(r,X(r;t,z)) \, dr \right| \leq    \frac 1 \delta \int_0^t \int_0^{x_0} \left|\partial_\xi w(r,\xi)\right| \, d\xi dr + \left\|a\, \partial_x w \right\|_{L^\infty(\tilde \omega)}.
\end{equation}
with $\tilde \omega:=(0,T)\times (x_m,x_M)$. Finally, by Eqs \eqref{eq:derivatives of B} and \eqref{eq:inter decreasing sigma 1} we deduce that
\begin{multline*}
B(s;t,A(x))-B(s;t,A(y)) \\
\geq  \exp{\left(- \frac 1 \delta \int_0^t \int_0^{x_0} \left|\partial_\xi w(r,\xi)\right| \, d\xi dr - \left\|a\, \partial_x w \right\|_{L^\infty(\tilde \omega)}\right)} \int_y^x \frac{1}{a(z)} \, dz\,.
\end{multline*}
We may now take the limit $s\to \sigma_t(x)=\sigma_t(y)$ thanks to Lemma \ref{lem:either-or property for sigma} to deduce that  \smash{$\int_y^x \tfrac{1}{a(z)} \, dz\leq 0$}. This contradicts the strict positivity of $a$. Thus, $\sigma_t(y)>\sigma_t(x)$. 

We proceed now to the proof of the limit \eqref{eq:decreasing sigma and limit}. Let $x\in(0,x_c(t))$ be such that $\sigma_t(x)>0$. We remark that by Lemma \ref{prop:diffeo}, the limit $X(t;\sigma_t(x),0^+)$ exists. As a first step we prove that this limit is greater or equal than $x$. Let $\{x^n\}$ a positive decreasing sequence towards zero. Using Lemma \ref{lem:invariance}, $X(s;\sigma_t(x),x^n)\geq \min(x_n,x_0)>0$, for all $s\in(\sigma_t(x),T)$. Thus, $(\sigma_t(x),T) \subseteq \Sigma_{t,X(t;\sigma_t(x),x^n)}$. By definition of $\sigma_t$, we have
\[\sigma_t\left(X(t;\sigma_t(x),x^n)\right)\leq \sigma_t(x)\ \text{and then}\ X(t;\sigma_t(x),x^n)\geq x \]
(otherwise it contradicts the fact that $\sigma_t$ is decreasing). Moreover, by Remark \ref{rem:no cross}, $X(t;\sigma_t(x),x^n)$ is a decreasing sequence. Hence it converges to some $y\geq x$. 

We show now that $y=x$ arguing by contradiction. Assume that $y>x$. For $z\in(x,x_c(t)\wedge y)$ we have that $0<\sigma_t(z)<\sigma_t(x)$ by the first part of the proof. Then  Remark \ref{rem:no cross} yields, for all $s>\sigma_t(x)$ and $n\in \mathbb{N}$,
\[
X(s;t,x)<X(s;t,z)<X(s;t,y)\leq X(s;t,X(t;\sigma_t(x),x^n))=X(s;\sigma_t(x),x^n)\,.\]
Taking the limit $s\to\sigma_t(x)$ we obtain
\[0\leq X(\sigma_t(x);t,z)\leq x^n\,.\]
Taking now the limit $n\to \infty$ leads to  $X(\sigma_t(x);t,z)=0$, which contradicts that $0<\sigma_t(z)<\sigma_t(x)$ by Lemma \ref{lem:either-or property for sigma}. Then $y=x$, which concludes the proof.
\end{proof}

\begin{lemma} \label{lem:sigma homeo}
   For all $t\in (0,T)$, $\sigma_t$ is a (strictly) decreasing homeomorphism from $(0,x_c(t))$ to $(0,t)$.
\end{lemma}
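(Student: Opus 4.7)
The plan is to identify the map $g(s):=X(t;s,0^+)$ for $s\in(0,t)$ as a two-sided inverse for $\sigma_t$; once this is in place the homeomorphism property follows from the elementary theory of strictly monotone maps between intervals, so no substantial new ingredient is required.

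First I would note that strict monotonicity of $\sigma_t$ on $(0,x_c(t))$ is already provided by Lemma \ref{lem:decreasing sigma and limit}, and positivity there by Lemma \ref{lem:separation by x_c(t)}. Moreover, for any $x>0$ the transport field $v$ is locally Lipschitz near $(t,x)$, so the Cauchy--Lipschitz theory extends the characteristic to a small two-sided neighborhood of $t$; hence $\sigma_t(x)<t$ strictly, and $\sigma_t$ maps $(0,x_c(t))$ into $(0,t)$.

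Next, the candidate inverse $g(s):=X(t;s,0^+)$ is well defined and takes values in $(0,x_c(t))$ by point (1) of Lemma \ref{prop:diffeo}, and is strictly decreasing (hence injective) by Lemma \ref{rem:twotimes}. The identity $g(\sigma_t(x))=x$ for $x\in(0,x_c(t))$ is exactly the content of \eqref{eq:decreasing sigma and limit} in Lemma \ref{lem:decreasing sigma and limit}. Applying this identity at the point $g(s)\in(0,x_c(t))$ gives $g(\sigma_t(g(s)))=g(s)$, and injectivity of $g$ then forces $\sigma_t(g(s))=s$. Thus $\sigma_t$ and $g$ are mutually inverse, so in particular $\sigma_t\colon(0,x_c(t))\to(0,t)$ is a bijection.

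To close the argument I would invoke the elementary fact that a strictly monotone bijection between two open intervals is automatically a homeomorphism: surjectivity onto an interval forces continuity of any monotone map, and the monotone bijective inverse $g$ is continuous by the same reasoning. I do not foresee any real obstacle; the only subtle point is reading \eqref{eq:decreasing sigma and limit} as the relation $g\circ\sigma_t=\mathrm{id}$, after which exhibiting $g$ as a left inverse promotes it to a full inverse almost for free.
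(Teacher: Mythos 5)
Your proof is correct, and it takes a genuinely different route from the paper. The paper proves the homeomorphism property directly: it shows $\sigma_t(x_c(t))=0$, then establishes left and right continuity of $\sigma_t$ by two separate sequential contradiction arguments built around the reparametrized characteristics $B$, and finally checks that $\sigma_t(x)\to t$ as $x\to 0$ before concluding. Your argument instead identifies the candidate inverse $g(s)=X(t;s,0^+)$ explicitly and checks that it is a two-sided inverse: the identity $g\circ\sigma_t=\mathrm{id}$ on $(0,x_c(t))$ really is \eqref{eq:decreasing sigma and limit}, and $\sigma_t\circ g=\mathrm{id}$ on $(0,t)$ then follows from injectivity of $g$ (Lemma \ref{rem:twotimes}) applied to $g(\sigma_t(g(s)))=g(s)$; once $\sigma_t$ is exhibited as a strictly monotone bijection between open intervals, continuity of $\sigma_t$ and $g$ is automatic. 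The minor points you flag are handled correctly: $\sigma_t(x)<t$ holds because $\Sigma_{t,x}$ is an open neighborhood of $t$ for $t>0$, $x>0$; $g(s)\in(0,x_c(t))$ for $s\in(0,t)$ is exactly points (1) and Step 5 of Lemma \ref{prop:diffeo}; positivity and strict monotonicity of $\sigma_t$ on $(0,x_c(t))$ come from Lemmas \ref{lem:separation by x_c(t)} and \ref{lem:decreasing sigma and limit}; and there is no circularity, since all the lemmas you invoke are established before this one and do not depend on it. What your version buys is brevity and conceptual clarity: it eliminates Steps 1 through 4 of the paper's proof (in particular both $\veps$-sequential continuity arguments) in favor of the standard fact that a monotone bijection between intervals is a homeomorphism, at the modest cost of relying a little more heavily on the already-established Lemmas \ref{prop:diffeo} and \ref{rem:twotimes}. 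The paper's longer proof, by contrast, re-derives continuity from scratch using the characteristic flow, which makes the mechanism more explicit but is strictly more work.
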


\begin{proof}

 \textit{Step 1. Proof of $\sigma_t(x_c(t))=0$.} By definition, $\sigma_t(x_c(t))=\inf \Sigma_{t,x_c(t)}$, and $\Sigma_{t,x_c(t)}=\{s\geq0; X(s;t,x_c(t))>0\}$. By point {\it \ref{it:s2}} in Lemma \ref{prop:diffeo}, we have $X(s;t,x_c(t))=x_c(s)$. Thus, $\Sigma_{t,x_c(t)}=\{s\geq 0; x_c(s)>0\}$. Thanks to Lemma \ref{lem:separation by x_c(t)}, $x_c(s)$ is positive for any $s>0$ and hence $\sigma_t(x_c(t))=\inf \Sigma_{t,x_c(t)}=0$.
 
 In the next two steps, we characterize the sequential continuity of $\sigma_t$ both from the left and from the right. There is no loss of generality in restricting ourselves to monotone  sequences.
 
 \textit{Step 2. The map $x\mapsto \sigma_t(x)$ is left continuous.} Let $x\in(0,x_c(t)]$ and let $\{x^n\} \subset (0,x_c(t))$ be an increasing sequence converging to $x$ with $x^n<x$ for all $n\geq 1$. By Lemma \ref{lem:decreasing sigma and limit}, $\sigma_t(x^n) > \sigma_t(x)$ for all $n\geq 1$ and $\{\sigma_t(x^n)\}$ is a decreasing sequence, thus it converges. To show the sequential continuity of $\sigma_t$ we argue by contradiction; therefore we 
 assume that $\sigma_t(x^n)$ converges to $s >\sigma_t(x)$. Note that in particular $\sigma_t(x^n)\geq  s$. Assumption \eqref{A6} provides us with  $\delta>0$ and $x_0$ such that $w(t,y)\ge \delta $ for all $y\in(0,x_0)$, so that $V(t,A(y))
 \geq \delta$ 
 and then
\[\frac{\partial B(t;s,A(y))  }{\partial s}\leq -\delta I(t;s,A(y))
 \leq 0\,, \quad \mbox{for}\, s\in(\sigma_t(x),\sigma_t(x^n)).
\] 
 Thus, as $A^{-1}$ is increasing, we have that 
\[ X(t;\sigma_t(x^n),y)\leq X(t;s,y)\leq X(t;\sigma_t(x),y)
\quad \mbox{for all}\, y\in(0,x_0).
\]
Using Lemma \ref{lem:decreasing sigma and limit} we may take the limit $y\to 0$ to obtain
\[ x^n \leq X(t;s,0^+)\leq x.\]
Taking next the limit $n\to\infty$ we deduce that $X(t;s,0^+)=x=X(t;\sigma_t(x),0^+)$, which contradicts Lemma \ref{rem:twotimes} since we had assumed that  $s>\sigma_t(x)$. Therefore $\sigma_t(x^n)$ converges to $\sigma_t(x)$ as desired.

\smallskip
 
\textit{Step 3. The map $x\mapsto \sigma_t(x)$ is right continuous.} Let $x\in(0,x_c(t))$ and take $\{x^n\}$ a decreasing sequence converging to $x$ and such that $x<x^n< x_c(t)$. Thus $\{\sigma_t(x^n)\}$ is increasing and $\sigma_t(x^n)< \sigma_t(x)$. We show again the sequential continuity by means of a contradiction argument. Assume that $\sigma_t(x^n)\to s < \sigma_t(x)$ and hence $\sigma_t(x^n) < s$.
Similarly to Step 2, for any $y<x_0$ we have
\[ X(t;\sigma_t(x^n),y)\geq X(t;s,y)>X(t;\sigma_t(x),y)\,. \]
Using Lemma \ref{lem:decreasing sigma and limit} and taking the limit $y\to 0$ we obtain
\[ x^n \geq X(t;s,0^+)\geq x\,.\]
 Now we deduce that $X(t;s,0^+)=x$ taking the limit $n\to\infty$. But for $u\in(s,\sigma_t(x))$ we have $X(u;s,0^+)>0$ and also
 \[X(u;s,0^+)=X(u;t,X(t;s,0^+))=X(u;t,x)>0
 \] by Lemma \ref{prop:diffeo}, which contradicts that $u<\sigma_t(x)$. This shows that $\sigma_t(x^n)\to \sigma_t(x)$ as $n\to \infty$.

\smallskip

\textit{Step 4. Proof of $\lim_{x\to 0} \sigma_t(x)=t$.} Let $\{x^n\}$ be a positive decreasing sequence converging to zero. Then $\sigma_t(x^n)\leq t$ is  increasing and converges to $\bar s \in(0,t]$. We use a contradiction argument to prove that $\bar s=t$. Assume that $\bar s<t$. There exists $N\geq 1$ such that $x^n < \min(x_0,x_c(t))$ for all $n\geq N$. By Lemma \ref{lem:invariance}, \[
A(x^n) - B(s;t,A(x^n)) \geq \delta (t - s) \quad \mbox{for all}\, s\in(\sigma_t(x^n),t) \, \mbox{and}\, n\geq N.  
\]
Then, for  $\eta \in (0, t - \bar s)$ we have that $B(t-\eta;t,A(x^n))$ is well defined for all $n\geq N$ and
\[ B(t-\eta;t,A(x^n)) \leq A(x^n) -  \delta \eta\,. \]
We can choose $n$ large enough such that $A(x^n) - \eta \delta<0$, since $A(x_n)$ converges to zero. This contradicts that $B(t-\eta;t,A(x^n))>0$ by construction. Thus we must have $\bar s = t$.

\smallskip

\textit{Step 5. Conclusion.} Putting together what we have proved so far, $\sigma_t$ is a strictly decreasing, positive and continuous function on $(0,x_c(t))$. Therefore it is an homeomorphism onto its image $(0,t)$ since $\lim_{x\to 0} \sigma_t(x)=t$, and, by continuity, $\lim_{x\to x_c(t)} \sigma_t(x) = 0$.  

\end{proof}

\begin{lemma} \label{lem:bound sigma}There holds that
\[
\sigma_t^{-1}(s) = \lim_{x\to 0} X(t;s,x) \leq C(K,T)\quad \mbox{for all}\ t\in(0,T)\ \ \mbox{and}\ s\in(0,t)
\]
with $C(K,T)$ the constant in \eqref{eq:characteristics curves - uniform bounds}.
\end{lemma}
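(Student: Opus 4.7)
\medskip

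\noindent\textbf{Proof sketch.} The plan is to identify the limit $y := \lim_{x\to 0^+} X(t;s,x)$ with $\sigma_t^{-1}(s)$ and then to bound it using the monotonicity of $\sigma_t^{-1}$ together with Lemma \ref{prop:diffeo}{\it (5)}. Fix $t\in(0,T)$ and $s\in(0,t)$. By Lemma \ref{prop:diffeo}{\it (1)} the limit $y$ exists and satisfies $y\in(0,x_c(t))$, so in particular $\sigma_t(y)$ is well defined via Lemma \ref{lem:sigma homeo}.

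The first key step is to show that $\sigma_t(y)\le s$. For every $x>0$ set $z_x := X(t;s,x)$; then $z_x > y$ and the semigroup property gives $X(s;t,z_x)=x$, so that $s\in\Sigma_{t,z_x}$ and hence $\sigma_t(z_x)<s$. Since $x\mapsto X(t;s,x)$ is increasing with infimum $y$, one has $z_x\to y^+$ as $x\to 0^+$; the continuity of $\sigma_t$ on $(0,x_c(t))$ granted by Lemma \ref{lem:sigma homeo} then yields $\sigma_t(y)=\lim_{x\to 0^+}\sigma_t(z_x)\le s$.

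The second key step is to rule out $\sigma_t(y)<s$ by contradiction; this is the main obstacle, as it requires combining the diffeomorphism property with the continuous dependence on initial data at a point that is the infimum of the image of $X(t;s,\cdot)$. If $\sigma_t(y)<s$, then $s\in\Sigma_{t,y}=(\sigma_t(y),T)$ and Lemma \ref{lem:either-or property for sigma} gives $X(s;t,y)>0$. On the other hand, since $s\in\Sigma_{t,y}$, the regularity statement in Lemma \ref{lem:ode-X}{\it (1)} ensures continuity of $x'\mapsto X(s;t,x')$ at $x'=y$, so that
\[
X(s;t,y)=\lim_{x'\to y^+} X(s;t,x') = \lim_{x\to 0^+} X\bigl(s;t,X(t;s,x)\bigr) = \lim_{x\to 0^+} x = 0,
\]
by the semigroup property of Lemma \ref{lem:ode-X}{\it (2)}. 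This contradicts $X(s;t,y)>0$, hence $\sigma_t(y)\ge s$. Combined with the previous paragraph, $\sigma_t(y)=s$, i.e. $y=\sigma_t^{-1}(s)$.

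Finally, Lemma \ref{lem:sigma homeo} shows that $\sigma_t$ is a decreasing homeomorphism from $(0,x_c(t))$ onto $(0,t)$, so $\sigma_t^{-1}$ is decreasing on $(0,t)$ with $\lim_{s'\to 0^+}\sigma_t^{-1}(s')=x_c(t)$. Therefore $\sigma_t^{-1}(s)\le x_c(t)$, and the uniform bound $x_c(t)\le C(K,T)$ established in Lemma \ref{prop:diffeo}{\it (5)} closes the argument.
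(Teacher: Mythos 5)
Your proof is correct and follows essentially the paper's strategy: both identify $y:=\lim_{x\to 0^+}X(t;s,x)$ with $\sigma_t^{-1}(s)$ by first showing $\sigma_t(y)\leq s$ and then ruling out strict inequality by contradiction, and both extract the uniform bound from \eqref{eq:characteristics curves - uniform bounds}. The only variation is in the contradiction step: the paper picks an intermediate point $y'\in(\sigma_t^{-1}(s),y)$ so that $s\in\Sigma_{t,y'}$ is automatic, then uses the monotonicity in Remark~\ref{rem:no cross} to get $0<X(s;t,y')\leq x^n\to 0$, whereas you work directly at $y$ by invoking the $C^1$-dependence of the flow from Lemma~\ref{lem:ode-X} (valid once $s\in\Sigma_{t,y}$ is assumed) together with the semigroup identity $X(s;t,X(t;s,x))=x$ --- a minor, valid variant. (One small slip: the positivity $X(s;t,y)>0$ follows from the definition of $\Sigma_{t,y}$, i.e.\ trajectories being valued in $(0,\infty)$, rather than from Lemma~\ref{lem:either-or property for sigma} as you cite.)
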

 
 \begin{proof}
 Let $t\in(0,T)$ and $s\in(0,t)$. Consider  a positive decreasing sequence $\{x^n\}$  converging to zero; note that $\{X(t;s,x^n)\}$ is a decreasing sequence. Thus, by Lemma \ref{lem:sigma homeo}, $\{\sigma_t(X(t;s,x^n))\}$ is increasing and verifies that $\sigma_t(X(t;s,x^n))\leq s$; hence $\sigma_t^{-1}(s) \leq  X(t;s,x^n)$. 
 Since the sequence $\{X(t;s,x^n)\}$ decreases, it converges to some $\bar x\geq\sigma_t^{-1}(s)$ and in particular $X(t;s,x^n)\geq \bar x$ for all $n\geq 1$. We argue by contradiction to show that equality holds. Assume $\bar x>\sigma_t^{-1}(s)$ and let $y\in(\sigma_t^{-1}(s),\bar x)$, hence  $\sigma_t(y)< s$ and $X(s;t,y)>0$. But $y < \bar x \leq X(t;s,x^n)$ and then $X(s;t,y) < x^n \to 0$ as $n\to \infty$, which contradicts that $X(s;t,y)>0$. Thus $\bar x = \sigma_t^{-1}(s)$ as claimed. The bound is obtain by taking the limit $x\to 0$ in  \eqref{eq:characteristics curves - uniform bounds}. 
 \end{proof}

\begin{lemma} 
    For all $t\in(0,T)$, $\sigma_t^{-1}$ is a decreasing $C^1$-diffeomorphism from $(0,t)$ to $(0,x_c(t))$ and its derivative is given by  Eq.~\eqref{prop:derivative of sigma-1}.
\end{lemma}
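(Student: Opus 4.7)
The decreasing homeomorphism property of $\sigma_t^{-1}$ is already contained in Lemma \ref{lem:sigma homeo}, so the remaining tasks are to upgrade this to $\mathcal{C}^1$ regularity and to derive the explicit derivative formula. My plan is to pass to the reparametrized characteristics and work with the auxiliary map
\[Z(s) := A(\sigma_t^{-1}(s)) = \lim_{y\to 0^+} B(t;s,y), \qquad s\in (0,t),\]
where the second identity uses $B(t;s,A(x)) = A(X(t;s,x))$ from \eqref{eq:compositio}, Lemma \ref{lem:bound sigma}, and the continuity of $A$ at zero. For each fixed $y>0$, formulas \eqref{eq:derivatives of B} give $\partial_s B(t;s,y) = -V(s,y)\,I(t;s,y)$ with
\[I(t;s,y) = \exp\!\left(-\int_s^t (a\,\partial_x w)(\tau, A^{-1}(B(\tau;s,y)))\, d\tau\right),\]
so integrating in $s$ yields, for every $s,s_0\in(0,t)$,
\[B(t;s,y) - B(t;s_0,y) = -\int_{s_0}^s V(\sigma,y)\,I(t;\sigma,y)\, d\sigma.\]

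I would then pass to the limit $y\to 0^+$ on both sides. The left-hand side converges to $Z(s)-Z(s_0)$. On the right, dominated convergence should apply: Lemma \ref{lem:decomposition a phi} supplies a majorant of $|a\,\partial_x w|$ integrated along the characteristic $X(\tau;\sigma,A^{-1}(y)) = A^{-1}(B(\tau;\sigma,y))$ that is independent of $y$ in a right-neighborhood of zero, which bounds $|I(t;\sigma,y)|$, while $|V(\sigma,y)|$ is uniformly bounded on $[s_0,s]\times[0,1]$ by the continuity of $w$. The pointwise limit of the integrand is computed via $A^{-1}(B(\tau;\sigma,y)) = X(\tau;\sigma,A^{-1}(y)) \to \sigma_\tau^{-1}(\sigma)$ as $y\to 0^+$, which is Lemma \ref{lem:bound sigma} at time $\tau$, and $V(\sigma,y)\to w(\sigma,0)$. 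This yields
\[Z(s)-Z(s_0) = -\int_{s_0}^s w(\sigma,0)\exp\!\left(-\int_\sigma^t (a\,\partial_x w)(\tau,\sigma_\tau^{-1}(\sigma))\, d\tau\right) d\sigma.\]
Once this outer integrand is shown to be continuous in $\sigma$, the fundamental theorem of calculus gives $Z\in\mathcal{C}^1(0,t)$, and the chain rule with $A'(x)=1/a(x)$ produces exactly \eqref{prop:derivative of sigma-1}. Since $a>0$ on $(0,\infty)$ and $w(s,0)\geq\delta>0$ by \eqref{A6}, the derivative never vanishes, so $\sigma_t^{-1}$ is indeed a $\mathcal{C}^1$-diffeomorphism onto $(0,x_c(t))$.

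The main obstacle I anticipate lies in the two limit-exchange steps. First, the dominated convergence argument requires a bound on the inner characteristic $A^{-1}(B(\tau;\sigma,y))$ that is uniform in $\tau\in[\sigma,t]$ as well as in $y$; this should follow by splitting the $\tau$-integral at the exit time from $(0,x_0)$, applying the estimate of Lemma \ref{lem:decomposition a phi} (or a forward analogue obtained by the same change-of-variable trick) on the small-$x$ portion, and using assumption \eqref{A3} together with the trajectory bound \eqref{eq:characteristics curves - uniform bounds} away from zero. Second, the continuity in $\sigma$ of the limit integrand reduces to joint continuity of $(\tau,\sigma)\mapsto\sigma_\tau^{-1}(\sigma)$ on $\{0<\sigma<\tau<t\}$; this should be obtainable by adapting the sequential-continuity argument in Lemma \ref{lem:sigma homeo} to let both arguments vary, invoking the semigroup identities of Lemma \ref{prop:diffeo} and the uniform convergence of $X(\tau;\sigma,\cdot)$ on compacts coming from Lemma \ref{lem:ode-X}.
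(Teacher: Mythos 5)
Your proposal is essentially the same as the paper's argument: both integrate the $\partial_s B$ formula from \eqref{eq:derivatives of B} in the time variable, pass to the limit $y\to 0^+$ using Lemma \ref{lem:decomposition a phi} together with the characteristics bound to justify dominated convergence, and read off the derivative from the resulting representation. The only cosmetic difference is the choice of base point: the paper anchors the integration at $s=t$ so that $B(t;t,A(x))=A(x)\to 0$, producing the closed formula $\sigma_t^{-1}(s)=A^{-1}\bigl(\int_s^t w(\tau,0)\exp(-\int_\tau^t (a\,\partial_x w)(r,\sigma_r^{-1}(\tau))\,dr)\,d\tau\bigr)$ directly, whereas you anchor at an arbitrary $s_0$ and then invoke the fundamental theorem of calculus; both require the continuity in $\tau$ of the limiting integrand, which you correctly flag as a remaining point and which the paper treats rather tersely.
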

\begin{proof}
For any $t\in(0,T)$, $s\in(0,t)$ and $x>0$ we have from Eq.~\eqref{eq:derivatives of B} that
\begin{multline}\label{eq:to intervert}
B(t;t,A(x)) - B(t;s,A(x)) =  \int_s^t \dfrac{\partial B(t;\tau,A(x))}{\partial \tau} \, d\tau\\
=-  \int_s^t w(\tau,x)\, \exp\left(-\int_\tau^t \left(a\, \partial_x w \right)(r,X(r;\tau,x))\, dr \right) \, d\tau\,.
\end{multline}
We want to take the limit $x \to 0^+$ in this equation, since as $x\to 0^+$ we have $B(t,t,A(x))=A(x) \to 0$ and also $B(t;s,A(x))=A(X(t;s,x))\to A(\sigma_t^{-1}(s))$ from Lemma \ref{lem:bound sigma}, as $x\to 0^+$. Interchanging the limit and the integral we get a formula for $\sigma_t^{-1}(s)$ that will enable us to compute the derivatives. We split this argument in three steps.

\smallskip

\textit{Step 1.} We justify that for all $t\in(0,T)$ and $\tau\in(0,t)$,
\begin{multline}\label{eq:limit_1}
\lim_{x\to 0^+} \dfrac{\partial B(t;\tau,A(x))}{\partial \tau}\\
= -\lim_{x\to 0^+} w(\tau,x)\, \exp\left(-\int_\tau^t \left(a\, \partial_x w\right)(r,X(r;\tau,x))\, dr \right)
 \\ 
=
-w(\tau,0) \exp\left(- \displaystyle  \int_\tau^t \left(a\, \partial_x w\right)(r,\sigma_r^{-1}(\tau))\, dr\right).
\end{multline}
Let $t\in (0,T)$, $\tau\in(0,t)$. To prove the limit in \eqref{eq:limit_1}, we will split the integral above in two parts,
\begin{multline}\label{eq:limit_int_2}
\int_\tau^t \left(a\, \partial_x w\right)(r,X(r;\tau,x))\, dr = \int_\tau^{\tau_0} \left(a\, \partial_x w\right)(r,X(r;\tau,x))\, dr \\+ \int_{\tau_0}^t \left(a\, \partial w\right)(r,X(r;\tau,x))\, dr\,.
\end{multline}
Here $\tau_0$ is chosen small enough and independent of $x$, so that it allows to make the change of variable $r\mapsto z=X(r;\tau,x)$ (as in the proof of Lemma \ref{lem:decomposition a phi}) on $(\tau,\tau_0)$ and to be away from zero on $(\tau_0,t)$.

Let $\veps>0$ and consider $\delta>0$ and $x_0>0$ given by \eqref{A6}. We recall that $\partial_x w$ is integrable by assumption \eqref{A5}, therefore, we can find  $x_1\in(0,x_0)$ such that $\int_0^T \int_0^{x_1}|\partial_y w(r,y)|\, dydr <\delta \veps/2$. Let now $x_2\in(0,x_1)$. There exists $\tau_0\in(\tau,t)$ such that $X(\tau_0;\tau,x_2)<x_1$ by continuity in the first variable. Hence $X(\tau_0;\tau,x)<X(\tau_0;\tau,x_2)<x_1$ for all $x\in(0,x_2)$ by Remark \ref{rem:no cross}. Using Lemma \ref{lem:decomposition a phi} with a similar reasoning as in \eqref{eq:210}  we have that
\begin{equation} \label{eq:bound-aphi-0}
    0\leq \int_{\tau}^{\tau_0} \left|\left(a\, \partial_x w\right)(r,X(r;\tau,x))\right|  \, dr \leq \frac{1}{\delta}\int_0^t\int_0^{x_1}\left|\partial_y w(r,y)\right|\, dydr
    \leq \frac \veps 2
\end{equation} 
for all $x\in(0,x_2)$. Also, by Fatou's lemma, as $x \to 0$,
\begin{equation} \label{eq:bound-aphi-0-limit}
 \int_{\tau}^{\tau_0} \left|\left(a\, \partial_x w\right)(r,\sigma_r^{-1}(\tau))\right|\, dr \leq \frac\veps 2.
\end{equation}

For the second term in Eq.~\eqref{eq:limit_int_2}, we bound the integrand uniformly in $x$. Indeed, by the bound \eqref{eq:characteristics curves - uniform bounds} that is $C(K,T)>0$ independent of $\tau$, we have $X(r;\tau,x)\leq x_M\coloneqq C(K,T)(1+x_0)$ for all $r\in(\tau,T)$ and $x\in(0,x_2)$. We also have 
\begin{equation}\label{eq:bound below B}
 B(\tau_0;\tau,A(x)) \geq \delta (\tau_0-\tau) + A(x) \geq \delta (\tau_0-\tau) 
\end{equation}
by Lemma \ref{lem:invariance}. Thus, there holds that 
\[
x_0> X(\tau_0;\tau,x)\geq x_m {:= A^{-1}(\delta(\tau_0-\tau))
}>0\quad \mbox{for all}\, x\in(0,x_2),
\]
 which entails, by Lemma \ref{lem:invariance}, that 
 \[X(r;\tau,x)=X(r;\tau_0,X(\tau_0;\tau,x))\ge X(\tau_0;\tau,x)\ge x_m\]
  for all $r \ge \tau_0$ and for every $x\in (0,x_2)$. Hence, the map $r\mapsto(a\,\partial_x w)(r,X(r;\tau,x))$ is uniformly bounded in $r\in(\tau_0,t)$ and $x\in(0,x_2)$, which justifies that 
\begin{equation} \label{eq:bound-aphi-1}
    \lim_{x\to 0^+} \int_{\tau_0}^t \left(a \, \partial_x w\right)(r,X(r;\tau,x))  \, dr = \int_{\tau_0}^t \left(a \, \partial_x w\right)(r, \sigma_r^{-1}(\tau)) \, dr.
\end{equation}
Finally, by Eqs. \eqref{eq:bound-aphi-1}, \eqref{eq:bound-aphi-0} and \eqref{eq:bound-aphi-0-limit} we deduce that for all $\veps>0$,
\[ \lim_{x\to 0^+} \left| \int_{\tau}^{t} \left(\left(a\, \partial_x w\right)(r,X(r;\tau,x))  -   \left(a\, \partial_x w\right)(r,\sigma_r^{-1}(\tau)) \right) \, dr  \right| \leq \veps  \]
and in this way we obtain 
\eqref{eq:limit_1}.

\smallskip

\textit{Step 2.} Now we bound the derivative of $B$ in order to pass to the limit $x\to 0$ in Eq.~\eqref{eq:to intervert}.  Let $t\in (0,T)$. We split the integral in \eqref{eq:to intervert} in three parts, again with the idea of separating the contributions where $X(r;\tau,x)$ is close to zero and away from it:
\begin{multline*}
 \int_s^t w(\tau,x)\exp\left(-\int_\tau^t \left(a\, \partial_x w\right)(r,X(r;\tau,x))\, dr\right) \, d\tau
 \\ = \int_{t_0}^t w(\tau,x)\exp\left(-\int_\tau^t \left(a\, \partial_x w\right)(r,X(r;\tau,x))\, dr\right) \, d\tau\\
 + \int_s^{t_0} w(\tau,x)\exp\Big(-\int_\tau^{\tau+\tau_0} \left(a\, \partial_x w\right)(r,X(r;\tau,x))\, dr\\
 -\int_{\tau+\tau_0}^t \left(a\, \partial_x w\right)(r,X(r;\tau,x))\, dr\Big) \, d\tau \,.
\end{multline*}
This holds for some $0<t_0<t_0+\tau_0<t$, with $t_0$ sufficiently close to $t$ and $\tau_0$ small enough, both independent from $x$, as we explain in what follows. Consider again $\delta>0$ and $x_0>0$ given by \eqref{A6} and let $x_1\in(0,x_0)$. As $X(t;t,x_1)=x_1$,  by continuity, there exists $t_0<t$ such that $X(t;\tau,x_1)<x_0$ for all $\tau\in(t_0,t)$ and hence 
\[
X(r;\tau,x)<X(t;\tau,x)<X(t;\tau,x_1)<x_0\,, \ \text{for all}\ r\in(\tau,t),\, x\in(0,x_1),\, \tau\in(t_0,t)\,.
\] 
Using Lemma \ref{lem:decomposition a phi}, we get that  
\[  \left| \int_{\tau}^{t}  \left(a\, \partial_x w\right)(r,X(r;\tau,x)) \, dr \right| \leq \frac 1 \delta \int_0^t \int_0^{x_0} \left|\partial_y w(r,y)\right|\, dy dr  \]
for all $\tau\in(t_0,t)$ and $x\in(0,x_1)$.

Let now $\tau \in (0,t_0)$. Since $x_1<x_0$ there exists $0<\tau_0 < t-t_0$ such that $A(x_1)+\rho\tau_0<A(x_0)$. Hence by Eq.~\eqref{eq:edo-B}, 
\[
B(\tau+\tau_0;\tau,A(x)) \leq A(x) + \rho \tau_0 \leq A(x_0)\quad \mbox{for all}\, x\in(0,x_1) 
\]
and by Lemma \ref{lem:invariance}, 
\[X(r;\tau,x)<X(\tau+\tau_0;\tau,x)<x_0\, \quad \mbox{for all}\, r\in(\tau,\tau+\tau_0).\] 
Note that the condition on $\tau_0$ ensures $\tau+\tau_0<t$ whenever $\tau\in(0,t_0)$. On the other hand we have 
\[
B(\tau+\tau_0;\tau,A(x)) \geq A(x) + \delta \tau_0 \geq \delta \tau_0
\]
 and, by Lemma \ref{lem:invariance}, $X(r;\tau,x) \ge x_m\coloneqq A^{-1}(\delta \tau_0)$ for all $r\in(\tau+\tau_0;t)$ and $x\in(0,x_1)$. Since $X(r;\tau,x) \leq x_M\coloneqq C(K,T)(1+x_1)$ for all $\tau\in(0,T)$, $r\in(\tau,T)$ and $x\in(0,x_1)$, we let $\tilde \omega:=(0,T)\times (x_m,x_M))$ and then we have 
\[\left| \int_{\tau+\tau_0}^t \left(a\, \partial_x w\right)(r,X(r;\tau,x)) \, dr \right| \leq T \left\|a\, \partial_x w\right\|_{L^\infty (\tilde \omega)} \]
for all $\tau \in(0,t_0)$ and $x\in(0,x_1)$. Note that $x_m$ does not depend here on $\tau\in(0,t_0)$, contrary to the construction from Eq.~\eqref{eq:bound below B}. Finally, by Lemma \ref{lem:decomposition a phi},
\[\left| \int_{\tau}^{\tau+\tau_0} \left(a\, \partial_x w\right)(r,X(r;\tau,x))  \, dr \right|  \leq \frac 1 \delta \int_0^t \int_0^{x_0} \left|\partial_y w(r,y)\right| \, dydr.\]
Combining these results we obtain that $\dfrac{\partial B(t;\tau,A(x))}{\partial \tau}$ is uniformly bounded in $\tau \in(0,t)$ and $x\in(0,x_1)$, namely
\begin{equation}
\nonumber
    \left|\dfrac{\partial B(t;\tau,A(x))}{\partial \tau} \right| \leq \|w\|_{L^\infty(\tilde \omega)}   \exp\left(\frac{2}{\delta}\|\partial_x w\|_{L^1((0,T)\times(0,x_0))}  + T \left\|a\,\partial_x w\right\|_{L^\infty (\tilde \omega)}  \right)\:.
\end{equation} 

\smallskip

\textit{Step 3.} Now we pass to the limit $x\to 0$ in Eq.~\eqref{eq:to intervert}, where the interchange of limits and integrals is justified and we obtain
\[ \sigma_t^{-1}(s) = A^{-1}\left(\int_s^t w(\tau,0) \exp\left(-\int_\tau^t \left(a\, \partial_x w\right)(r,\sigma_r^{-1}(\tau))\, dr\right) \, d\tau \right)\]
for all $t\in(0,T)$ and $s\in(0,t)$. Clearly the right-hand side is continuously differentiable since $A^{-1}$ is and we easily identify the derivative of $\sigma^{-1}_t(s)$.

\end{proof}

\subsection{Representation formula and regularity properties}
\label{Ann:5}

Once we have the tools introduced in the previous subsection we can proceed to prove the statements of Theorem \ref{th:compilation}. Thanks to Lemma \ref{prop:diffeo} and Proposition \ref{prop:diffeomorphism sigma_t}, we define, for a.e. $(t,x)\in \Omega_T^*$, 
  \begin{equation}
\label{eq:mild solution}
f(t,x) = f^{\rm in}(X(0;t,x)) J(0;t,x) \mathbf 1_{(x_c(t),\infty)}(x) + G(\sigma_t(x))|\sigma_t'(x)|\mathbf 1_{(0,x_c(t))}(x)\,.
\end{equation}
This ensures that $f$ solves \eqref{eq:linearproblem}. We now provide several intermediate statements that serve as a proof of the remaining points in Theorem \ref{th:compilation}. Recall that $f^{\rm in}$ satisfies \eqref{H6}.

\begin{lemma}\label{lem:compactness}
	The family $\set{f(t,\cdot)}{t\in (0,T)}$ constructed via Eq.~\eqref{eq:mild solution} is weakly relatively compact in $L^1((0,\infty),(1+x)dx)$. 
	In particular, 
	\begin{equation} \label{eq:bound-1+x-f}
	\sup_{t\in[0,T)}\int_0^\infty (1+x)f(t,x)\, dx < \infty.
	\end{equation}
\end{lemma}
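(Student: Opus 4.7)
The plan is to invoke the Dunford--Pettis characterization of weak relative compactness in $L^1((0,\infty),(1+x)\,dx)$: the required conditions are (i) a uniform $L^1$ bound, (ii) tightness at infinity, and (iii) equi-integrability. Items (i) and (ii) will follow relatively easily from the explicit representation \eqref{eq:mild solution} via change of variables, whereas (iii) will be the main obstacle because neither the Jacobian $J(0;t,x)$ nor the derivative $|\sigma_t'(x)|$ need be bounded as $x\to 0^+$.

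For (i), I would perform the substitutions $y=X(0;t,x)$ in the first term and $s=\sigma_t(x)$ in the second term of \eqref{eq:mild solution}, which yields
\[ \int_0^\infty (1+x) f(t,x)\, dx = \int_0^\infty (1+X(t;0,y))\, f^{\rm in}(y)\, dy + \int_0^t (1+\sigma_t^{-1}(s))\, G(s)\, ds,\]
and conclude using $X(t;0,y)\le C(K,T)(1+y)$ from \eqref{eq:characteristics curves - uniform bounds}, $\sigma_t^{-1}(s)\le C(K,T)$ from Lemma \ref{lem:bound sigma}, together with the hypotheses on $f^{\rm in}$ and $G$; this gives \eqref{eq:bound-1+x-f}. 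For (ii), the nucleation term has support in $(0,x_c(t))\subset(0,C(K,T))$ independently of $t$, so for $R\ge C(K,T)$ only the transport term contributes; the same change of variables combined with the reverse estimate $X(0;t,R)\ge R/C(K,T)-1$ (obtained by applying \eqref{eq:characteristics curves - uniform bounds} to $y=X(0;t,R)$) yields
\[ \int_R^\infty (1+x) f(t,x)\, dx \le C \int_{R/C(K,T)-1}^\infty (1+y)\, f^{\rm in}(y)\, dy, \]
which vanishes uniformly in $t$ as $R\to\infty$ by \eqref{H6}.

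For (iii), I would treat the two terms in \eqref{eq:mild solution} separately. For the nucleation term, given $\varepsilon>0$ and a Borel set $E\subset(0,\infty)$, I would split $E=[E\cap(0,\eta)]\cup[E\cap[\eta,\infty)]$ at a threshold $\eta>0$. On $[\eta,x_c(t))$, formula \eqref{prop:derivative of sigma-1} combined with Lemma \ref{lem:decomposition a phi} and the uniform lower bounds on $a$ and $w(\cdot,0)$ (via \eqref{A2} and \eqref{A6}, noting that the relevant characteristics stay in $[0,C(K,T)]$) would provide a $t$-uniform upper bound $|\sigma_t'(x)|\le C_\eta$, yielding a contribution controlled by $C_\eta|E|$. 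On $(0,\eta)$, a change of variables would bound the contribution by $\|G\|_\infty(1+\eta)(t-\sigma_t(\eta))$, and the key $t$-uniform estimate $t-\sigma_t(\eta)\le A(\eta)/\delta$ with $A(y)=\int_0^y 1/a$, established already in Step~4 of the proof of Lemma \ref{lem:sigma homeo}, would make this arbitrarily small by first choosing $\eta$ small enough. For the transport term, I would approximate $f^{\rm in}$ by the truncation $f^{\rm in}_{N,M}:=\min(f^{\rm in},M)\mathbf{1}_{(1/N,N)}$; the remainder would give, via the change of variables above and dominated convergence, an $L^1((1+x)dx)$-error that is small uniformly in $t$. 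The approximated density $f^{\rm in}_{N,M}(X(0;t,x))J(0;t,x)\mathbf{1}_{(x_c(t),\infty)}(x)$ would have $x$-support in $(0,C(K,T)(1+N))$ uniformly in $t$; on this support the entire trajectory $\tau\mapsto X(\tau;t,x)$ would remain in the fixed compact subset $[\min(1/N,x_0),C(K,T)(1+N)]\subset(0,\infty)$ by Lemma \ref{lem:invariance}, forcing $\partial_x v$ to be bounded there via \eqref{A1p} and hence $J(0;t,x)$ to be uniformly bounded. An $L^\infty$-bounded family supported on a set of finite $(1+x)dx$-measure is trivially equi-integrable, closing the argument.
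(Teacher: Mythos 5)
Your proposal is correct, and for points (i) and (ii) it coincides with the paper's argument almost verbatim (same changes of variables, same use of \eqref{eq:characteristics curves - uniform bounds} and the semigroup property to obtain the lower bound on $X(0;t,n)$). Where you genuinely diverge is in the equi-integrability step. For the transported part, the paper does \emph{not} truncate $f^{\rm in}$: it changes variables and then directly estimates the Lebesgue measure of $X(0;t,E\cap(x_c(t),\infty))$, splitting at a fixed level $\bar x$, bounding the Jacobian for $x>\bar x$ via \eqref{A1p}, and for $x<\bar x$ rewriting $J(0;t,x)=\tfrac{a(X(0;t,x))}{a(x)}\,I(0;t,A(x))$ so that the factor $1/a(x)\in L^1(0,1)$ absorbs the possible blow-up of $J$ near $x_c(t)$, with $I$ controlled by Lemma \ref{lem:decomposition a phi}. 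Your truncation argument ($f^{\rm in}_{N,M}$ plus the uniform-in-$t$ smallness of the remainder via the change of variables) sidesteps that Jacobian decomposition entirely by forcing, through forward invariance, the relevant trajectories into a fixed compact subset of $(0,\infty)$ on which $\partial_x v$ is bounded; this is a clean alternative and, if anything, makes the role of $1/a\in L^1$ less visible in the transport part (it is absorbed into the integrability of $f^{\rm in}$). For the nucleation part, the paper bounds $|\sigma_t'(x)|\le C(T)/(\delta a(x))$ pointwise and invokes $1/a\in L^1$ directly, whereas you integrate $|\sigma_t'|$ exactly over $(0,\eta)$ to get $t-\sigma_t(\eta)$ and then use the estimate $t-\sigma_t(\eta)\le A(\eta)/\delta$; these are two faces of the same computation. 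One small wrinkle worth flagging: the estimate $t-\sigma_t(\eta)\le A(\eta)/\delta$ is not literally stated in Step~4 of Lemma \ref{lem:sigma homeo}, but it does follow immediately from the inequality used there, once one lets $s\downarrow\sigma_t(\eta)$ and uses $B\to 0$; you should also restrict to $\eta<x_0$ and note that when $x_c(t)<\eta$ the same inequality applied at $x_c(t)$ gives $t\le A(\eta)/\delta$, so the bound is indeed uniform in $t$. With those minor clarifications, the argument closes.
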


\begin{proof}
	 The result will follow as a consequence of Dunford-Pettis' theorem, see  \emph{e.g.} \cite[Chap. IV.8]{Dunford} . Since $f$ is nonnegative, we are to prove the following:
	\begin{enumerate} [align=left, leftmargin=2pt, labelindent=\parindent,label=\textup{(\arabic*)},itemsep=3pt]
	\item Bound \eqref{eq:bound-1+x-f},
	\item $\lim\limits_{n \to +\infty} \sup\limits_{t\in[0,T]}\displaystyle  \int_n^\infty f(t,x) (1+x)\,dx = 0$,
	\item For all $\veps >0$, there exists $\delta >0$ such that
	\[ \sup_{t\in[0,T]} \int_E f(t,x) (1+x)\,dx <\veps\]
	for every Lebesgue measurable set $E$ with measure $|E|<\delta$. 
	\end{enumerate}
	
	Point 1. We integrate Eq.~\eqref{eq:mild solution} and use the diffeomorphisms in Lemma \ref{prop:diffeo} and Proposition \ref{prop:diffeomorphism sigma_t} to obtain 
	\begin{equation*}
	\int_0^\infty f(t,x) \, dx = \int_0^\infty f^{\rm in}(x)\,  dx + \int_0^t G(s) \, ds,
	\end{equation*}
	for each $t\in(0,T)$. This is uniformly bounded since $G$ is bounded and $f^{\rm in}$ belongs to $L^1((0,\infty),(1+x)dx)$. 
	In a similar way, using the bound~\eqref{eq:characteristics curves - uniform bounds} and the bound in Proposition \ref{prop:diffeomorphism sigma_t} we have that for each $t\in(0,T)$,
	\begin{equation*} 
	\int_0^\infty x f(t,x)\,dx = \int_0^\infty X(t;0,x) f^{\rm in}(x)\,dx + \int_0^t \sigma_t^{-1}(s) G(s)
	\,ds
	\end{equation*}
	is uniformly bounded.  This proves \eqref{eq:bound-1+x-f}. 
	
	Point 2. Note first that there exists a constant $C(K,T)>0$ such that   $x_c(t) \leq C(K,T)$ for all $t\in(0,T)$; this follows from bound \eqref{eq:characteristics curves - uniform bounds} and Lemma \ref{prop:diffeo}. Choose $N$ large enough such that $N\geq x_c(t)$ for all $t\in(0,T)$. Then, integrating \eqref{eq:mild solution} and changing variables we obtain that
	\[\int_n^\infty (1+x)f(t,x)\, dx = \int_{X(0;t,n)}^\infty (1+X(t,0,x)) f^{\rm in}(x) \, dx\]
	for all $t\in(0,T)$ and  $n\ge N$. Since $X(t,0,x) \leq C(K,T)(1+x)$ again from bound \eqref{eq:characteristics curves - uniform bounds}, we have that
	\[\int_n^\infty (1+x)f(t,x)\, dx \le C(K,T) \int_{X(0;t,n)}^\infty (1+x) f^{\rm in}(x)\,  dx\]
	increasing the value of the constant $C(K,T)$ if needed. Next we notice that $n \le C(K,T)(1+X(0;t,n))$ after \eqref{eq:characteristics curves - uniform bounds} and the semigroup property. Hence, thanks to integrability of $f^{\rm in}$, we can pass to the limit $n\to \infty$, uniformly in $t$, to obtain the desired property.
	
	Point 3. Let $E$ be a Lebesgue measurable set. We estimate the integrals over $E\cap(0,x_c(t))$ and $E\cap(x_c(t),\infty)$ separately. Thanks to Eqs.~\eqref{eq:characteristics curves - derivatives in x and t} and \eqref{eq:characteristics curves - uniform bounds}, we have
	\begin{multline}\label{eq:concentration-inter--1}
	\int_{E\cap(x_c(t),\infty)}(1+x)f(t,x)\,dx = \int_{E\cap(x_c(t),\infty)} (1+x) f^{\rm in}(X(0;t,x))J(0;t,x) \, dx \\
	\leq C(T) \int_{X(0;t,E\cap(x_c(t),\infty))} (1+x) f^{\rm in}(x)\, dx
	\end{multline}
	for some constant $C(T)>0$ independent of time $t\in[0,T)$. Let $x_0$ be given by \eqref{A6} and let $\bar x$ be such that $X(s;0,x_0)\leq \bar x$ for all $s\in[0,T)$ which is possible thanks to  Eq. \eqref{eq:characteristics curves - uniform bounds}. Note that for all $s,t\in[0,T)$ and $x>\bar x$ we have 
	\[	X(s;t,x)>X(s;t,\bar x)>X(s;t,X(t;0,x_0))=X(s;0,x_0)>x_0\]
	This is due to the monotonicity (Remark \ref{rem:no cross}), the semigroup property and invariance. Now we estimate the measure of $X(0;t,E\cap(x_c(t),\infty))$ for $t\in[0,T)$ as follows:
	\begin{multline}\label{eq:concentration-inter-0}
	|X(0;t,E\cap(x_c(t),\infty))| = \int_{E\cap(x_c(t),\infty)} J(0;t,x)\, dx \\
	\leq \int_{E\cap(x_c(t),\bar x)}  J(0;t,x)\, dx + |E\cap(\bar x,\infty)|\exp\left(\left\|\partial_x v\right\|_{L^\infty((0,T)\times (x_0,\infty))} \right)\,. 
\end{multline}
	Here we used Eq. \eqref{eq:characteristics curves - derivatives in x and t} and \eqref{A1p}. Next we proceed to bound the Jacobian. Since $A(X(0;t,x))=B(0;t,A(x))$ using the derivatives in the third variable for $X$ and $B$ we get 
	\[J(0;t,x) = \frac{a(X(0;t,x))}{a(x)} I(0;t,A(x)) \]
    for all $x>x_c(t)$, with $I$ in Eq. \eqref{eq:derivatives of B}. To proceed further we use Eq. \eqref{eq:characteristics curves - uniform bounds} to fix $x^*$ such that $X(0;t,x) \leq x^*$ for all $x\in(x_c(t),\bar x)$. By Eq. \eqref{eq:concentration-inter-0} above, we obtain
    \begin{equation} \label{eq:concentration-inter-1}
    |X(0;t,E\cap(x_c(t),\infty))| \leq \|a\|_{L^\infty(0,x^*)}\! \int_{E\!\cap\!(x_c(t),\bar x)} \!\frac{I(0;t,A(x))}{a(x)} \, dx + C(T)|E|\,.
    \end{equation}
    We now bound $I$. Given $x\in(x_c(t),\bar x)$, we either have $X(0;t,x)<x_0$  or $X(0;t,x)\geq x_0$; we discuss both cases in turn. On one hand, if $X(0;t,x) \geq x_0$ we use Lemma \ref{lem:invariance} to deduce that for all $s\in(0,T)$, $x_0 \le X(s;t,x)\le x^*$. Thus, noticing that $a \partial_x w$ is locally bounded on $\Omega_T$, because of \eqref{A2} and \eqref{A3},
    \begin{equation}\label{eq:concentration-inter-2}
    I(0;t,A(x)) \leq \exp\left(
    \left\|a \partial_x w \right\|_{L^\infty((0,T)\times (x_0,x^*))}
    \right)\,.
    \end{equation}
    On the other hand, if $X(0;t,x)<x_0$, there exists $s_0$ such that $X(s_0;t,x)=x_0$ and then $X(s;t,x)<x_0$ for all $s\in(0,s_0)$. So, by Lemma \ref{lem:decomposition a phi} 
    \begin{equation} \label{eq:concentration-inter-3}
    |I(0;t,A(x))| \leq \exp \left(
    \tfrac 1 \delta \int_0^T\int_0^{x_0} 
\partial_y w(r,y)\, dydr
    +\left\|a \partial_x w\right\|_{L^\infty((0,T)\times (x_0,x^*))}
      \right)\:. 
    \end{equation}
     In conclusion, combining Eqs. \eqref{eq:concentration-inter-1}, \eqref{eq:concentration-inter-2} and \eqref{eq:concentration-inter-3} 
    we obtain 
    \[|X(0;t,E\cap(x_c(t),\infty))| \leq C(T) \left(\int_{E\cap(0,\bar x)} \frac{1}{a(x)}\, dx + |E|\right)\,. \]
    Given that $1/a\in L^1(0,1)$ and $f^{\rm in}(x)$ is integrable, Eq.~\eqref{eq:concentration-inter--1} entails 
	\begin{equation} \label{eq:limit E 1}
	\lim_{|E| \to 0} \sup_{t\in[0,T]}\int_{E\cap(x_c(t),\infty)}(1+x)f(t,x)\, dx = 0\,.
	\end{equation}
	
	It remains to do the same with
	\[ \int_{E\cap(0,x_c(t))} (1+x)f(t,x) \, dx = \int_{E\cap (0,x_c(t))} (1+x)  G(\sigma_t(x)) |\sigma_t'(x)|\, dx\,. \]
	Recall that $G(t)$ and $x_c(t)$ are uniformly bounded on $(0,T)$. Therefore, there is some $C(T)$ such that 
	\begin{equation}\label{eq:concentre-0}
	\int_{E\cap(0,x_c(t))}(1+x) f(t,x)\, dx \leq C(T) \int_{E\cap (0,x_c(t))} |\sigma_t'(x)|\, dx. 
	\end{equation}
	We now consider this last integral. We observe that for all $x\in(0,x_c(t))$ 
	\begin{equation}
	\label{eq:deriv stigmatx}
	\sigma_t'(x) = \frac{1}{\sigma_t^{-1}{}'(\sigma_t(x))} = - \frac{1}{a(x)w(\sigma_t(x),0)} \exp\left( \int_{\sigma_t(x)}^t \left(a\, {\partial_x w}\right)(\tau,X(\tau;t,x))\, d\tau \right)\,, 
	\end{equation}
	where we used that $X(\tau;t,x) = \sigma_\tau^{-1}(\sigma_t(x))$ by Proposition~\ref{prop:diffeomorphism sigma_t}. Thanks to Lemma \ref{lem:either-or property for sigma}  we have that $\lim_{s\to \sigma_t(x)} X(s;t,x)=0$ whenever $x\in(0,x_c(t))$. Thus, for each $(t,x)\in(0,T)\times(0,x_c(t))$, there exists $s_0\in(\sigma_t(x),t]$ such that $X(s;t,x)<x_0$ for all $s\in(\sigma_t(x),s_0)$ where $x_0$ is given by \eqref{A6}. Using Lemma \ref{lem:decomposition a phi}, 
	\[ \int_{\sigma_t(x)}^t\!  \left(a\, \partial_x w\right)\!(\tau,X(\tau;t,x))\, d\tau \! \leq \! \frac{1}{\delta}\! \int_0^T\! \!\int_0^{x_0}\!
	|\partial_y(r,y)|\, dydr + \left\|a\, \partial_x w\right\|_{L^\infty((0,T)\times (x_0,C(T))}.\]
	 Here $C(T)>0$ is some constant which bounds $X(s;t,x)$ uniformly in $s,t\in (0,T)$ and $x\in(0,x_c(t))$ -see Lemma \ref{lem:ode-X}. Finally, thanks to \eqref{A6}, we have 
	\[|\sigma_t'(x)| \leq   \frac{C(T)}{\delta a(x)}  \]
    for all $s\in(0,T)$, again by Lemma \ref{lem:invariance}. The right-hand side of this last estimate is integrable around the origin by \eqref{A4} and hence, by  Eq.~\eqref{eq:concentre-0},
	\begin{equation} \label{eq:limit E 2}
	\lim_{|E| \to 0} \int_{E\cap(0,x_c(t))}(1+x)f(t,x)\,dx \leq  \frac{C(T)}{\delta} \lim_{|E|\to0} \int_{E\cap(0,x_c(t))}\frac{1}{a(x)}\, dx = 0.  
	\end{equation}
	Combining limits \eqref{eq:limit E 1} and \eqref{eq:limit E 2} finishes the proof.
\end{proof}

\begin{lemma}
	The function $f$ in Eq.~\eqref{eq:mild solution} satisfies 
	\begin{multline} \label{eq:ww}
	\int_0^T \int_0^\infty (\partial_t\vphi(t,x) + v(t,x)
	\partial_x\vphi(t,x))f(t,x)\,dx\, dt \\
	+ \int_0^\infty \vphi(0,x)f^{\rm in}(x) \, dx + \int_0^T \vphi(t,0) G(t)
	\, dt = 0
	\end{multline}
	for all $\vphi\in\Cc^1_c([0,T)\times[0,\infty))$.
\end{lemma}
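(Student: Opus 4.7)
The plan is to substitute the representation formula \eqref{eq:mild solution} into the left-hand side of \eqref{eq:ww} and reduce it to telescoping line integrals along characteristics via two changes of variables. For fixed $t\in(0,T)$, I split the spatial integral at $x_c(t)$. On $(x_c(t),\infty)$, I substitute $y=X(0;t,x)$: by Lemma \ref{prop:diffeo}, this is a $\mathcal C^1$ diffeomorphism onto $(0,\infty)$ whose Jacobian satisfies $J(0;t,x)\,dx=dy$, exactly cancelling the factor in front of $f^{\rm in}$. On $(0,x_c(t))$, I substitute $s=\sigma_t(x)$: by Proposition \ref{prop:diffeomorphism sigma_t}, this is a decreasing $\mathcal C^1$ diffeomorphism onto $(0,t)$ whose Jacobian $|\sigma_t'(x)|\,dx=ds$ absorbs the other factor. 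Writing $x=X(t;0,y)$ and $x=\sigma_t^{-1}(s)=X(t;s,0^+)$ respectively, the double integral becomes
\begin{align*}
&\int_0^T\!\!\int_0^\infty f^{\rm in}(y)\,(\partial_t\vphi+v\partial_x\vphi)(t,X(t;0,y))\,dy\,dt\\
&\quad+\int_0^T\!\!\int_0^t G(s)\,(\partial_t\vphi+v\partial_x\vphi)(t,X(t;s,0^+))\,ds\,dt.
\end{align*}

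Next, I would apply Fubini's theorem to swap the order of integration in each term. This is legitimate because $\vphi$ has compact support in $[0,T_0]\times[0,R]$ for some $T_0<T$ and $R>0$, and the uniform bound \eqref{eq:characteristics curves - uniform bounds} together with the semigroup identity $y=X(0;t,X(t;0,y))$ confines the effective supports to bounded rectangles where $f^{\rm in}$ and $G$ are integrable. For the first term, once swapped, the chain rule applied to the smooth characteristic \eqref{eq:characteristics curves} gives
\[
\int_0^T\frac{d}{dt}\bigl[\vphi(t,X(t;0,y))\bigr]\,dt=\vphi(T,X(T;0,y))-\vphi(0,y)=-\vphi(0,y),
\]
since $\vphi$ vanishes near $T$. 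This contributes $-\int_0^\infty\vphi(0,y)f^{\rm in}(y)\,dy$.

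For the second term, after swapping we obtain $\int_0^T G(s)\int_s^T(\cdots)\,dt\,ds$, and here lies the main technical obstacle: the chain rule $\frac{d}{dt}\vphi(t,X(t;s,0^+))=(\partial_t\vphi+v\partial_x\vphi)(t,X(t;s,0^+))$ must be justified along the limit characteristic emanating from the singular point $(s,0)$, where $v$ degenerates. To do so, I would pass to the limit $x\to 0^+$ in the identity
\[
X(t_2;s,x)-X(t_1;s,x)=\int_{t_1}^{t_2}v(r,X(r;s,x))\,dr\quad(s<t_1<t_2<T),
\]
invoking the continuity of $v$, the uniform bound \eqref{eq:characteristics curves - uniform bounds}, assumption \eqref{A1}, and dominated convergence, to conclude that $t\mapsto X(t;s,0^+)$ is $\mathcal C^1$ on $(s,T)$ with derivative $v(t,X(t;s,0^+))$. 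The inner integral then evaluates to $\vphi(T,X(T;s,0^+))-\lim_{x\to 0^+}\vphi(s,X(s;s,x))=-\vphi(s,0)$, contributing $-\int_0^T\vphi(s,0)G(s)\,ds$. Summing both contributions and rearranging yields exactly \eqref{eq:ww}.
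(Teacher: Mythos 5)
Your proposal follows essentially the same route as the paper: substitute the representation formula, change variables via the two diffeomorphisms from Lemma \ref{prop:diffeo} and Proposition \ref{prop:diffeomorphism sigma_t}, apply Fubini, and evaluate the resulting time integrals along characteristics via the chain rule, with the first term producing the initial-data integral and the second the boundary term. The only (minor) organizational difference is in how the limit characteristic emanating from $(s,0)$ is handled: you first establish that $t\mapsto X(t;s,0^+)$ is $\mathcal C^1$ with the expected derivative and then integrate, whereas the paper keeps $x>0$, applies the chain rule along $X(\cdot;s,x)$ to get $\vphi(s,x)$, and only then passes to the limit $x\to 0^+$ by dominated convergence; both versions rely on the same dominated-convergence step and yield $\vphi(s,0)$.
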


\begin{proof}
	Let $\vphi\in\Cc^1_c([0,T)\times[0,+\infty))$, and define
	\begin{equation}\label{eq:def-psi}
	\psi(t,x) = - ( \partial_t\vphi(t,x) + v(t,x)
	)\partial_x\vphi(t,x)),  \quad    (t,x)\in \Omega_T\,.
	\end{equation}
	Using the definition of $f$ in Eq.~\eqref{eq:mild solution}, its integrability in Lemma~\ref{lem:compactness} and equation~\eqref{eq:def-psi}, we obtain 
	\begin{multline}\label{eq:intermediate-mild-1}
	\int_0^T \int_0^\infty (\partial_t\vphi(t,x) + v(t,x)
	)\partial_x\vphi(t,x))f(t,x)\, dx\, dt \\
	= - \int_0^T \int_{x_c(t)}^\infty \psi(t,x) f^{\rm in}(X(0;t,x))J(0;t,x)\, dx\, dt \\ -    \int_0^T \int_0^{x_c(t)} \psi(t,x) G(\sigma_t(x)) |\sigma_t'(x)|\, dx\, dt\,.
	\end{multline} 
	Using the changes of variables in Lemma \ref{prop:diffeo} and Proposition~\ref{prop:diffeomorphism sigma_t} with Fubini's theorem, we have
	\begin{multline} \label{eq:weak-etape1}
	\int_0^T \int_{x_c(t)}^\infty \psi(t,x) f^{\rm in}(X(0;t,x))J(0;t,x)\, dx\, dt \\
	=  \int_0^\infty \left(\int_0^T \psi(t,X(t;0,x))\right) f^{\rm in}(x)\, dx\, dt\,,
	\end{multline}
	and
	\begin{multline} \label{eq:weak-etape2}
	\int_0^T \int_0^{x_c(t)} \psi(t,x) G( u(\sigma_t(x)) ) |\sigma_t(x)'|\, dx\, dt  \\
	=  \int_0^T \left(\int_s^T \psi(t,\sigma_t^{-1}(s))\, dt \right)G(s)\, ds\,.
	\end{multline}
	By the definition of the characteristics curves \eqref{eq:characteristics curves} and using the definition of $\psi$ in Eq.~\eqref{eq:def-psi}, we have
	\begin{equation} \label{eq:deriv-phi-x} 
	\frac{\partial }{\partial s} [\vphi(s,X(s;t,x))] = - \psi(s,X(s;t,x))
	\end{equation}
	for all $(t,x)\in \Omega_T$ and $s\in(\sigma_t(x),T)$. We stress that this equation remains true for $t=0$ since, by Lemma  \ref{lem:either-or property for sigma}, $X(s;0,x)>0$ for all $s>0$. Hence, integrating Eq.~\eqref{eq:deriv-phi-x} over $(0,T)$ and since $\vphi(T,x)=0$ for all $x>0$, this yields 
	\begin{equation*}
	\vphi(0,x) = \int_0^T \psi(t;X(t;0,x))\, dt
	\end{equation*}
	for $x>0$. We can insert this relation into equation~\eqref{eq:weak-etape1} to obtain
	\begin{equation} \label{eq:weak-etape1-bis}
	\int_0^T \int_{x_c(t)}^\infty \psi(t,x) f^{\rm in}(X(0;t,x))J(0;t,x)\, dx\, dt =  \int_0^\infty \vphi(0,x) f^{\rm in}(x)\, dx\,  dt\,.
	\end{equation}
	
	Finally, by Proposition \ref{prop:diffeomorphism sigma_t}, we have $\psi(t,\sigma_t^{-1}(s)) = \lim_{x\to 0} \psi(t,X(t;s,x))$. Thus, using the dominated convergence theorem and equation~\eqref{eq:deriv-phi-x},
	\[ \int_s^T \psi(t,\sigma_t^{-1}(s))\, dt=\lim_{x\to 0} \int_s^T \psi(t,X(t;s,x)) \, dt=\vphi(t,0)\]
	for all $t\in (0,T)$. Replacing this last relation in Eq.~\eqref{eq:weak-etape2} we obtain  
	\begin{equation}\label{eq:weak-etape2-bis}
	\int_0^T \int_0^{x_c(t)} \psi(t,x) G( u(\sigma_t(x)) ) |\sigma_t(x)'|\, dx\, dt   = \int_0^T \vphi(t,0) G(t)\, dt\,.
	\end{equation}
	Inserting Eqs.~\eqref{eq:weak-etape1-bis} and \eqref{eq:weak-etape2-bis} into Eq.~\eqref{eq:intermediate-mild-1} ends the proof.
\end{proof}

Now we prove points (1) and (3) in Theorem \ref{th:compilation}. We can show that Eq.~\eqref{eq:ww} is satisfied by $f$ whenever $\vphi(t,x) = g(t)h(x)$, with $g\in \Cc^1_c(0,T)$ and $h\in\Cc_c^0([0,\infty))$ with $h'\in L^\infty(0,\infty)$. This follows from a standard regularization argument, together with the fact that $f$ belongs to $L^\infty((0,T);L^1(0,\infty))$,  Eq.~\eqref{eq:bound-1+x-f}, and the fact that  the rates are locally bounded.  Then, again by regularization, Eq.~\eqref{eq:ww} is shown to be true  for $h$ locally bounded and such that $h'\in L^\infty(0,\infty)$, namely,
\begin{multline*}
\int_0^T g'(t) \int_0^\infty h(x) f(t,x)\,dx  + \int_0^T g(t) \int_0^\infty v(t,x)
 h'(x) f(t,x)\,dx\, dt \\
+ h(0)\int_0^T g(t) G(t)\, dt = 0\,.
\end{multline*}
Here we used that $f$ belongs to  $L^\infty((0,T);L^1((0,\infty),(1+x)dx))$ and the sublinearity of $v$ in \eqref{A1}; 
 note that $h$ has a well-defined limit at the origin. This entails that the map  $t\mapsto \int_0^\infty h(x)f(t,x)\,dx$ has a bounded time derivative, which yields \eqref{eq:moment_formulation}. We have in particular that   $t\mapsto \int_0^\infty (1+x)h(x)f(t,x)\, dx$ is continuous for all $h\in\Cc_c^0(0,\infty)$, which is improved up to $h\in L^\infty(0,\infty)$ thanks to Lemma \ref{lem:compactness} and implies the claimed regularity of $f$. To finish the proof we analyze the limit in point (\ref{eq:trace}). Let $t\in(0,T)$, we have 
\[f(t,x) = G(\sigma_t(x)) |\sigma_t'(x)|\, \quad \mbox{a.e.}\ x\in(0,x_c(t))\,.\]
Since the right-hand side is continuous in $x$ we may choose a version of $f$ that is continuous on $(0,x_c(t))$. Then  from Eq. \eqref{eq:deriv stigmatx}
\[v(t,x)
f(t,x) =  G(\sigma_t(x))
\frac{w(t,x)}{w(\sigma_t(x),0)}
 e^{\left( \int_{\sigma_t(x)}^t (a\, \partial_x w)(\tau,X(\tau;t,x))
 \, d\tau \right)}\,.
 \]
Thanks to Proposition \ref{prop:diffeomorphism sigma_t}, the factor in front of the exponential converges to $G(t)$ as $x \to 0^+$. It remains to prove that 
\[\lim_{x\to 0^+} \int_{\sigma_t(x)}^t \left(a\,\partial_x w\right)(\tau,X(\tau;t,x))
\, d\tau = 0\,.
\]
Consider $x_0$ and $\delta$ given by\eqref{A6} and let $x<x_0$ so that for all $\tau\in(\sigma_t(x),t)$ we have $X(\tau;t,x)<x_0$. Then, by Lemma \ref{lem:decomposition a phi}, 
\[\int_{\sigma_t(x)}^t \left(a\, \partial_x w\right)(\tau,X(\tau;t,x))
\, d\tau \leq \frac 1 \delta \int_0^t\int_0^{x} |\partial_y(r,y)| \, dydr\,.\]
This last term vanishes as $x \to 0$, which concludes the proof.

Finally, uniqueness follows from a classical duality argument. Let $\psi \in \mathcal C^1_c(\Omega_T^*)$. We have that $\vphi(t,x)=-\int_t^{T} \psi(s,X(s;t,x))ds$ is a solution of 
\[\partial_t \varphi + v \partial_x \vphi = -\psi, \qquad \vphi(T,x)=0\,,\]
so that for any two solutions $f_1$ and $f_2$ with initial data $f^{\rm in}$, there holds that  
\[ \int_0^T \int_0^\infty \psi (f_1-f_2) =  0\,.\]

	\section{Annex: Proof of Lemma \ref{lem:newlem_E} for uniqueness}
	\label{Ann:6}
	
	We start by proving the following classical result on tail density with the notation of Sec. \ref{sec:uniqueness}.
	\begin{lemma} \label{lem:distribution formulation F}
		We have, for $i=1,\,2$, that $F_i \in L^\infty((0,T);L^1(0,\infty))\cap L^\infty(\Omega_T^*)$,  that $\partial_x F_i=-f_i$ belongs to $L^\infty((0,T);L^1((0,\infty),(1+x)\,dx))$ and also that $\partial_t F_i$ belongs to $ L^\infty((0,T);L^1(0,\infty))$. Moreover, they satisfy
		\begin{equation}\label{eq:mass_Fi}
		\int_0^\infty F_i(t,x) \, dx = \int_0^\infty xf_i(t,x)\, dx
		\end{equation}
		for all $t\in(0,T)$, and
		\begin{equation} \label{eq:weak equation on F+}
		\partial_t F_i + v_i \partial_x F_i = 0\,, \quad \text{in}\ \mathcal{D}'(\Omega_T^*)\,.
		\end{equation}
	\end{lemma}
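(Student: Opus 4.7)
The plan is to prove the regularity claims and mass identity first by elementary manipulations, and then to establish the distributional transport equation by a duality argument based on the weak formulation of $f_i$.

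First I would check the integrability and boundedness of $F_i$. By Lemma \ref{lem:moments equations}, $f_i \in L^\infty((0,T);L^1((0,\infty),(1+x)\,dx))$, so immediately $\|F_i(t,\cdot)\|_{L^\infty} \le \|f_i(t,\cdot)\|_{L^1}$ is uniformly bounded in $t \in (0,T)$, giving $F_i \in L^\infty(\Omega_T^*)$. The identity \eqref{eq:mass_Fi} follows from Fubini:
\[
\int_0^\infty F_i(t,x)\,dx = \int_0^\infty \int_x^\infty f_i(t,y)\,dy\,dx = \int_0^\infty y\,f_i(t,y)\,dy,
\]
and the right-hand side is uniformly bounded in $t$, yielding $F_i \in L^\infty((0,T);L^1(0,\infty))$. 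The identity $\partial_x F_i = -f_i$ is a direct consequence of $F_i$ being an antiderivative in $x$ (Lebesgue's differentiation theorem applied to $f_i(t,\cdot) \in L^1(0,\infty)$), and the weighted integrability $\partial_x F_i = -f_i \in L^\infty((0,T); L^1((0,\infty),(1+x)\,dx))$ is then just a restatement of the regularity of $f_i$.

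The core step is to establish \eqref{eq:weak equation on F+}. Given $\psi \in \mathcal{D}(\Omega_T^*)$, I would introduce the primitive $\Psi(t,y) := \int_0^y \psi(t,x)\,dx$. Then $\Psi$ is smooth, compactly supported in $t \in (0,T)$, vanishes identically at $t=0$ and satisfies $\Psi(t,0) = 0$; however, $\Psi$ becomes eventually constant in $y$ (equal to $\int_0^\infty \psi(t,x)\,dx$ for $y$ large), so it is not compactly supported in $y$. To remedy this I would introduce a smooth cutoff $\chi_R \in \Cc^1([0,\infty))$ with $\chi_R \equiv 1$ on $[0,R]$ and $\chi_R \equiv 0$ on $[R+1,\infty)$, and use $\Psi \chi_R \in \Cc^1_c([0,T)\times[0,\infty))$ as a legal test function in \eqref{eq:LS-weak-time-density-boundary} for $f_i$. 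The boundary and initial contributions vanish due to $\Psi(t,0)=0$ and $\Psi(0,\cdot)=0$, leaving
\[
\int_0^T \int_0^\infty f_i \chi_R \partial_t \Psi\,dy\,dt + \int_0^T \int_0^\infty v_i f_i \chi_R \psi\,dy\,dt + \int_0^T \int_0^\infty v_i f_i \Psi \chi_R'\,dy\,dt = 0.
\]
Letting $R \to \infty$, the first two terms pass to the limit by dominated convergence (using that $\partial_t \Psi$, $\psi$ are bounded and $f_i, (1+x)f_i \in L^\infty((0,T);L^1)$ via \eqref{eq:sublinearity}), while the third vanishes because $\chi_R'$ is supported in $[R, R+1]$ and $v_i f_i \in L^\infty((0,T); L^1(0,\infty))$. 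Applying Fubini to the first term yields $\int\int F_i \partial_t \psi\,dx\,dt$, so that
\[
\int_0^T \int_0^\infty F_i \partial_t \psi\,dx\,dt = -\int_0^T \int_0^\infty v_i f_i \psi\,dx\,dt = \int_0^T \int_0^\infty v_i (\partial_x F_i) \psi\,dx\,dt,
\]
which is exactly \eqref{eq:weak equation on F+}. From this identification $\partial_t F_i = v_i f_i$ in $\mathcal{D}'(\Omega_T^*)$, and the bound $|v_i f_i| \le K_r(1+x) f_i$ from \eqref{eq:sublinearity} together with $(1+x)f_i \in L^\infty((0,T); L^1)$ gives $\partial_t F_i \in L^\infty((0,T); L^1(0,\infty))$.

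The main obstacle is the non-compactness of $\Psi$ in the spatial variable, which forces the cutoff-and-limit procedure described above; all the analytic substance lies in verifying that the remainder term involving $\chi_R'$ vanishes in the limit, which is where the first-moment bound on $f_i$ provided by Lemma \ref{lem:moments equations} plays the decisive role.
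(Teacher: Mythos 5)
Your proof is correct and follows essentially the same route as the paper: boundedness and \eqref{eq:mass_Fi} from Tonelli, the transport equation \eqref{eq:weak equation on F+} by testing the weak formulation \eqref{eq:LS-weak-time-density-boundary} against $\vphi(t,x)=\int_0^x\psi(t,y)\,dy$ for $\psi\in\mathcal D(\Omega_T^*)$ and applying Fubini, and the time-derivative regularity read off from the resulting equation together with the sublinearity bound \eqref{eq:sublinearity}. The one place you go beyond the paper's very terse proof is in making the cutoff-and-limit argument explicit to compensate for $\Psi$ not being compactly supported in $x$; this is a detail the paper glosses over, and your treatment of the remainder term involving $\chi_R'$ is the right way to fill it in.
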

	
	\begin{proof}
		Recall that $f_i$ belongs to $L^\infty((0,T);L^1((0,\infty),(1+x)\, dx))$. The boundedness of $F_i$ is an obvious consequence of the integrability of $f_i$ and the definition in Eq.~\eqref{eq:definition of F}, together with the regularity of the derivative in $x$. Integrability of $F_i$ and formula \eqref{eq:mass_Fi} follow from Tonelli's Theorem. Eq. \eqref{eq:weak equation on F+} is obtained using test functions of the form $\vphi(t,x)=\int_0^x \psi(t,y)\, dy$, for $\psi\in \Dc(\Omega_T^*)$, in Eq.~\eqref{eq:LS-weak-time-density-boundary} together with Fubini's theorem. Finally, the regularity of the time derivatives follows from Eq.~\eqref{eq:weak equation on F+}, the sublinearity of the rates and the regularity of $\partial_x F_i=f_i$. 
	\end{proof}
	
	\begin{proof}[Proof of Lemma \ref{lem:newlem_E}]
	By Lemma \ref{lem:distribution formulation F} we deduce
	\begin{equation} \label{eq:weak equation on E+}
	\partial_t E = -v_1 \partial_x F_1 + v_2 \partial_x F_2 = - v_1\partial_x E + aw f_2 \, , \quad \text{in}\, \mathcal{D}'(\Omega_T^*)\,.
	\end{equation}
 By Lemma \ref{lem:distribution formulation F} and Eq.~\eqref{eq:weak equation on E+}, for any real function $\beta$ defined on $\Rb$, continuously differentiable with bounded derivatives, we have
	\begin{equation}
	\nonumber
	\partial_t \beta(E) = - v_1\partial_x \beta(E) + aw  f_2 \beta'(E) \, , \quad \text{in}\ \mathcal{D}'(\Omega_T^*)\,.
	\end{equation}
	In particular we are led to 
	\begin{multline}
	\nonumber
	\frac{d}{dt}\int_0^\infty \vphi(x)\beta(E(t,x))\, dx = \int_0^\infty \partial_x [v_1(t,x)\vphi(x)] \beta(E(t,x)) \, dx \\
	+ \int_0^\infty a(x)w(t)  f_2(t,x) \beta'(E(t,x))\vphi(x) \, dx
	\end{multline}
	for all $\vphi$ belonging to $\mathcal D(0,\infty)$. Note that the distributional derivative $\partial_t \beta(E)$
	belongs to $L^\infty(0,T)$. This is due to $E$ being bounded, $f_2$ being integrable against $(1+x)$, the sublinearity of $a$ in \eqref{eq:sublinearity}, the fact that $u_1$ and $u_2$ are bounded and the boundedness of $\beta'$. We obtain
	\begin{multline}\label{eq:pregronwall-Ep-regular_intermediate}
	\int_0^\infty \vphi(x)\beta(E(t,x))\, dx  \leq   \int_0^\infty \vphi(x)\beta(E(0,x))\, dx \\
	+ \int_0^t \int_0^\infty \partial_x[v_1(s,x)\vphi(x)]  \beta(E(s,x)) \, dx\, dt \\
	+ \|\beta'\|_{L^\infty} \int_0^t |w(s)|  \int_0^\infty a(x) |\vphi(x)| f_2(s,x) \, dx \, dt 
	\end{multline}
	for any $\vphi$ belonging to $\Dc(0,\infty)$. 
	
	To obtain Eq.~\eqref{eq:pregronwall-Ep-regular} in Lemma \ref{lem:newlem_E}, we use a regularization procedure. Let $\vphi$ a nonnegative function belonging to $\mathcal C^0([0,\infty))$, such that $\vphi$ vanishes in a neighborhood of zero, and $\vphi'\in L^\infty(0,\infty)$ is compactly supported. To be able to substitute $\vphi$ into Eq.~\eqref{eq:pregronwall-Ep-regular_intermediate}, we need to regularize it to make it infinitely derivable and to truncate its support for large $x$. For each $R>1$, denote by $\chi_R$ a real function in $\Dc(\Rb)$ with $0\leq \chi_R\leq 1$, such that $\chi_R=1$ on $(0,R)$, with compact support in $(0,R+1)$, and $|\chi_R'|\leq 2$ on $(R,R+1)$. Let $\{g^\veps\}$ be a standard mollifying sequence. Define $\vphi_R^\veps=\vphi_R*g^\veps$  with $\vphi_R=\vphi \chi_R$ on $(0,\infty)$. We shall substitute $\vphi_R^\veps$ into \eqref{eq:pregronwall-Ep-regular_intermediate} and take the limits $\epsilon \to 0$ and $R\to \infty$ in turn.  Note that $\vphi_R^\veps$ converges uniformly to $\vphi_R$ on $\Rb$ as $\veps\to  0$. Moreover, the support of $\vphi_R^\veps$ is contained in $[0,R+1+\veps]$. For the time being, assume that $\beta$ is a nonnegative function on $\Rb$, continuously differentiable with $|\beta'|\le 1$ and $\beta(0)=0$. Observe that $\beta(y)\leq |y|$ for all $x\in \Rb$; since $|E(t,x)|$ is bounded on $\Omega_T$, it follows that
	\begin{equation*}
	\lim_{\veps \to 0}\int_0^\infty \vphi_R^\veps(x)\beta(E(t,x))  \, dx \\
	= \int_0^\infty \vphi_R(x)\beta(E(t,x)) \, dx <\infty\,,
	\end{equation*}
	for any $t\in[0,T)$. Then, since $f_2$ belongs to $L^\infty\left((0,T);L^1((0,\infty);(1+x)\, dx)\right)$ and $a$ is sublinear by \eqref{eq:sublinearity}, we have 
	\begin{equation*}
	\lim_{\veps \to 0} \int_0^t \int_0^\infty a(x)  f_2(t,x) \vphi^\veps_R(x) \, dx \, dt \\
	=  \int_0^t \int_0^\infty a(x)  f_2 (t,x) \vphi_R(x) \, dx \, dt
	\end{equation*}
	for all $t\in(0,T)$. Now, we remark that
	\begin{multline*}
	\int_0^t \int_0^\infty \partial_x(v_1(s,x) \vphi^\veps_R(x))  \beta(E(s,x)) \, dx\, dt \\
	= \int_0^t \int_0^\infty \left\{\partial_xv_1(s,x) \vphi^\veps_R(x) + v_1(s,x) \vphi_R'*g^\veps(x) \right\}  \beta(E(s,x)) \, dx\, dt.
	\end{multline*}
	On one hand, as $a$ and $b$ are continuously differentiable on $(0,\infty)$, $\vphi_R$ is compactly supported and $E$ belongs to $L^\infty(\Omega_T)$, we have  
	\begin{multline*}
	\lim_{\veps \to 0} \int_0^t \int_0^\infty \partial_xv_1(s,x) \vphi^\veps_R(x)   \beta(E(s,x)) \, dx\, dt\\
	=  \int_0^t \int_0^\infty \partial_x v_1(s,x) \vphi_R(x)   \beta(E(s,x)) \, dx\, dt. 
	\end{multline*}
	On the other hand, note that $(\vphi\chi_R)'$ is bounded with compact support, thus $(\vphi\chi_R)'*g^\veps$ converges to $(\vphi\chi_R)'$ almost everywhere. But $\vphi\chi_R$ has compact support and $a$ and $b$ are continuous, hence  bounded on this support. Moreover, $E$ belongs to $L^\infty(\Omega_T)$, so, via the dominated convergence theorem we have
	\begin{multline*}
	\lim_{\veps \to 0} \int_0^t \int_0^\infty v_1(s,x) \vphi_R'(x)*g^\veps(x)   \beta(E(s,x)) \, dx\, dt \\ =  \int_0^t \int_0^\infty v_1(s,x) \vphi_R'(x)   \beta(E(s,x)) \, dx\, dt .
	\end{multline*}
	Recapitulating, using that $\vphi_R \leq \vphi$, we get
	\begin{multline} 
	\label{eq:inter E+ vphiRannex}
	\int_0^\infty \!\!  \vphi_R(x)\beta(E(t,x))\, dx  \leq   \int_0^\infty \! \! \vphi(x) |E(0,x)|\, dx \\
	+\int_0^\infty \! \! \partial_x[v_1(t,x)\vphi_R(x)]  \beta(E(t,x))\, dx
	\\
	+ \int_0^t |w(s)|\int_0^\infty a(x)  f_2 (s,x) \vphi(x) \, dx \, ds \,.
	\end{multline}
	We may now pass in the limit $R \to \infty$ in Eq.~\eqref{eq:inter E+ vphiRannex}. We have, for any $t\in(0,T)$, by integration by parts (recall that $\vphi$ vanishes around the origin),
		\begin{equation}\label{eq:IPP1}
		\int_{0}^\infty  \partial_x[v_1(t,x)\vphi_R(x)]  \beta(E(t,x))\, dx = - \int_{0}^\infty  v_1(t,x) \vphi_R(x) \partial_x E(t,x) \beta'(E(t,x))\, dx 
		\end{equation}
		 As $\beta'$ is bounded and $ v_1(t,x)\partial_x E(t,x) $ integrable, we may pass in the limit $R\to\infty$ in the right-hand side of Eq.~\eqref{eq:IPP1} using the dominated convergence theorem. As $\vphi'$ is compactly supported and $\vphi$ vanishes around the origin, $\partial_x[v_1(t,x)\vphi(x)]$ is bounded, and we may perform an integration by parts in the other way, to obtain
		 \[\lim_{R\to\infty}  \int_{0}^\infty \! \! \partial_x[v_1(t,x)\vphi_R(x)]  \beta(E(t,x))\, dx  = \int_{0}^\infty  \partial_x[v_1(t,x)\vphi(x)] \beta(E(t,x))\,.\]
		 Thus,  letting $R\to \infty$ in \eqref{eq:inter E+ vphiRannex}, we get
			\begin{multline} 
		\label{eq:inter E+ vphiR2}
		\int_0^\infty \!\!  \vphi(x)\beta(E(t,x))\, dx  \leq   \int_0^\infty \! \! \vphi(x) |E(0,x)|\, dx \\
		+\int_0^\infty \! \! \partial_x[v_1(t,x)\vphi(x)]  \beta(E(t,x))\, dx
		\\
		+ \int_0^t |w(s)|\int_0^\infty a(x)  f_2 (s,x) \vphi(x) \, dx \, ds \,.
		\end{multline} 
	
	We then use the approximation of the absolute value	$\beta(x)=|x|-\epsilon/2$ for $|x|>\veps$ and $\beta(x)=\tfrac 1 {2\veps}x^2$ for $|x|\le \veps$ in the above equation \eqref{eq:inter E+ vphiR2} and we let $\veps \to 0$ (note again that $\partial_x[v_1(t,x)\vphi(x)]$ is bounded). We thus obtain Eq.~\eqref{eq:pregronwall-Ep-regular}.
	
We now prove Eqs.~\eqref{eq:inter-w-1}-\eqref{eq:control_on_E+(0)}. For $t\in(0,T)$,
	\[|w(t)| = \left| \int_0^\infty x f_1(t,x)\, dx - \int_0^\infty x f_2(t,x)\, dx \right|\,.\]
	In virtue of \eqref{eq:mass_Fi}, $E$ belongs to $L^\infty((0,T);L^1(0,\infty))$ and
	\begin{equation*}
	|w(t)| \leq \int_0^\infty |E(t,x)|\, dx
	\end{equation*}
	for all $t\in(0,T)$. Finally, thanks to Lemma \ref{lem:moments equations},
	\[F_i(t,0) = F_i(0,0) + \int_0^t \mathfrak n(u_i(s))\, dt\]
	for $i=1$, $2$ and hence by \eqref{H5'} there exists $K_\mathfrak{n}$ such that
	\begin{equation*}
	|E(t,0)| \leq |E(0,0)| + K_{\mathfrak n} \int_0^t |w(s)| \, ds
	\end{equation*}
	where $K_{\mathfrak n}$ is the Lipschitz constant of $\mathfrak n$ on $[\Phi_0,\rho]$.
		\end{proof} 
	
\section*{Acknowledgments} 

The authors would like to thank Boris Andreianov and Guy Barles (Institut Denis Poisson, Universit\'e de Tours) for many interesting and helpful discussions on the subject.
We also warmly thank the reviewers for their careful reading of the manuscript, and their valuable remarks that help us improve its quality.

\noindent J. C. acknowledges support from MICINN, projects MTM2017-91054-EXP and RTI2018-098850-B-IOO; he also acknowledges support from Plan Propio de Investigaci\'on, Universidad de Granada, Programa 9 -partially through FEDER (ERDF) funds-. E. H. acknowledges support from  FONDECYT Iniciaci\'on n$^\circ$ 11170655. R. Y. does not have to thank the French National Research Agency for its financial support but he kindly thanks it for the excellent reviews embellished with arguments based on scientific and cultural novelties in the expertise of his yearly application file during the last four years.

Part of this work was done while J. C. and R. Y. were visiting the Departamento de Matem\'atica at  Universidad del B\'io-B\'io and while E. H. and J. C. were visiting Institut Denis Poisson at Universit\'e de Tours and INRAE Nouzilly. J.C. thanks Universit\'e de Tours for a visiting position during last winter.

\end{document}